
\documentclass[11pt]{article}
\usepackage{amsmath}
\usepackage{amsfonts}
\usepackage{amssymb}
\usepackage{color}

\setcounter{MaxMatrixCols}{10}

\newtheorem{theorem}{Theorem}[section]

\newtheorem{corollary}[theorem]{Corollary}

\newtheorem{definition}[theorem]{Definition}
\newtheorem{example}[theorem]{Example}

\newtheorem{lemma}[theorem]{Lemma}

\newtheorem{proposition}[theorem]{Proposition}
\newtheorem{remark}[theorem]{Remark}

\newenvironment{proof}[1][Proof]{\noindent\textbf{#1.} }{\ \rule{0.5em}{0.5em}}
\setlength{\topmargin}{-2.5cm}
\setlength{\textheight}{24.8cm}
\setlength{\oddsidemargin}{0.0cm}
\setlength{\textwidth}{17cm}
\input{tcilatex}
\begin{document}

\title{Strict pseudocontractions and demicontractions, \\
their properties and applications}
\author{Andrzej Cegielski \\
University of Zielona G\'{o}ra\\
Zielona G\'{o}ra, Poland\\
(a.cegielski@wmie.uz.zgora.pl)$\bigskip $}
\date{May, 2nd, 2023 }
\maketitle

\begin{abstract}
We give properties of strict pseudocontractions and demicontractions defined
on a Hilbert space, which constitute wide classes of operators that arise in
iterative methods for solving fixed point problems. In particular, we give
necessary and sufficient conditions under which a convex combination and
composition of strict pseudocontractions as well as demicontractions that
share a common fixed point is again a strict pseudocontraction or a
demicontraction, respectively. Moreover, we introduce a generalized
relaxation of composition of demicontraction and give its properties. We
apply these properties to prove the weak convergence of a class of
algorithms that is wider than the Douglas--Rachford algorithm and projected
Landweber algorithms. We have also presented two numerical examples, where
we compare the behavior of the presented methods with the Douglas-Rachford
method.

\medskip

Key words: \qquad quasi-nonexpansive operators, strict pseudocontractions,
demicontractions, Douglas--Rachford algorithm
\end{abstract}

\section{Introduction}

In the last three decades many researchers studied the properties of
averaged operators in real Hilbert spaces \cite{BBR78, BB96, Byr04, BC17,
Ceg12}. Here we recall that an averaged operator $T$ is defined as a strict
convex combination of a nonexpansive operator and identity. Important
examples of averaged operators in a real Hilbert space $\mathcal{H}$ are the
metric projection \cite{BB96}, the resolvent of a maximally monotone
operator \cite{BC17}, the proximity operator related to a lower
semi-continuous convex function \cite{BC17} and the Landweber operator \cite%
{Byr02, Byr04}. It is well known that convex combinations as well as
compositions of averaged operators are again averaged operators \cite{BB96,
OY02, Byr04, Ceg12}. This property enables construction of a wide class of
averaged operators which have many applications in signal processing, image
reconstruction from projections, medical imaging (in particular in
computerized tomography), radiation therapy treatment planning, optics,
supervised learning process and many other areas \cite{Ceg12, CZ97, Deu01,
SY98}. These problems may be modelled as common fixed point problems, split
feasibility problems or variational inequality problems. For these problems
appropriate methods employing convex combinations or compositions of
averaged operators have been constructed in many papers. In the last three
decades these methods were developed in many publications and extended to
algorithms employing averaged operators defined on a Hilbert space, see,
e.g. \cite{Byr04, BB96, BC17, Ceg12, Ceg15, CZ97} and the references
therein. Averaged operators $T$ with nonempty $\limfunc{Fix}T$ have a
convenient property, namely the \textit{demi closedness principle} 
\begin{equation*}
(x^{k}\rightharpoonup y\text{ and }\Vert T(x^{k})-x^{k}\Vert \rightarrow
0)\Longrightarrow y\in \limfunc{Fix}T
\end{equation*}%
\cite{Opi67}. This yields weak convergence of algorithms employing these
operators. However, evaluation of averaged operators with a specified subset
of fixed points is often not straightforward. Thus, classes of operators
wider than the averaged ones that have a fixed point, namely strongly
quasi-nonexpansive operators (also called strongly attracting), had to be
defined \cite{BR77, BB96, Ceg12}. Here we recall that an operator $T:%
\mathcal{H}\rightarrow \mathcal{H}$ with nonempty $\limfunc{Fix}T$ is called 
$\rho $-strongly quasi-nonexpansive, where $\rho \in (0,+\infty )$ if 
\begin{equation*}
\Vert T(x)-z\Vert ^{2}\leq \Vert x-z\Vert ^{2}-\rho \Vert T(x)-x\Vert ^{2}
\end{equation*}%
for all $x\in \mathcal{H}$ and all $z\in \limfunc{Fix}T$. In many cases
these operators may be evaluated in a much simpler manner than their
averaged counterparts \cite{Ceg12}. An important example of a strongly
quasi-nonexpansive operator in a Hilbert space is a subgradient projection
related to a lower semicontinuous convex function. Other examples include a
Landweber operator related to a strongly quasi-nonexpansive operator \cite%
{Ceg15, Ceg16, CGRZ20, WX11} and its extrapolation \cite{Ceg16, CGRZ20,
LMWX12}. It turns out that convex combinations as well as compositions of
strongly quasi-nonexpansive operators that share a common fixed point are
again strongly quasi-nonexpansive \cite{BB96, YO04, Ceg12}. Moreover, a
class of strongly quasi-nonexpansive operators that share a common fixed
point and satisfy the demi closedness principle is also closed under convex
combinations and compositions \cite{CRZ18}. This property is important for
the convergence properties of corresponding algorithms \cite{CRZ18}. In the
recent decade operators from wider classes than averaged operators and
quasi-nonexpansive operators, namely strict pseudocontractions and
demicontractions, have been applied for solving the problems mentioned
above, see, e.g. \cite{BDP22, Ber19, Ber23, CM10, Mai10, MX07, Mou10, SC16,
TPL12, WC13, Zho08}. Here we recall that an operator $T:\mathcal{H}%
\rightarrow \mathcal{H}$ with nonempty $\limfunc{Fix}T$ is called $\alpha $-%
\textit{strict pseudocontraction}, where $\alpha \in (-\infty ,1)$, if 
\begin{equation}
\Vert T(x)-T(y)\Vert ^{2}\leq \Vert x-y\Vert ^{2}+\alpha \Vert
(T(x)-x)-(T(y)-y)\Vert ^{2}  \label{e-SPC1}
\end{equation}%
for all $x,y\in \mathcal{H}$ \cite[Def. 1]{BP67}, \cite{HK77}. Properties of
strict pseudocontractions were studied recently in \cite{BDP22}, where these
operators were called \textit{conically averaged. }If we suppose that $%
\limfunc{Fix}T\neq \emptyset $ and set $y\in \limfunc{Fix}T$ in (\ref{e-SPC1}%
) then we obtain the definition of an $\alpha $-\textit{demicontraction} 
\cite{HK77}. If $\alpha \in (-\infty ,0)$ then $T$ is respectively: averaged
or $-\alpha $-strongly quasi-nonexpansive. Demicontractions were also used
in extrapolated versions of appropriate algorithms for the (multiple) split
common fixed point problem, see, e.g. \cite{CSHG19, Cui21, HS18, SCS20,
SSP19, YLP17}. Besides of a recent paper \cite{BDP22}, other papers
mentioned above do not study the properties of strict pseudocontractions and
demicontractions in detail. However, these properties are important for the
convergence of algorithms employing those operators, enable a simplification
of proofs of convergence and allow to apply parameters from a wider range.
The aim of this paper is answering the following questions:

\begin{enumerate}
\item Are classes of strict pseudocontractions and classes of
demicontractions that share a common fixed point and satisfy the demi
closedness principle closed under convex combinations?

\item Is it true (and under what conditions) that composition of strict
pseudocontractions as well as composition of demicontractions that share a
common fixed point and satisfy the demi closedness principle is again a
strict pseudocontraction or a demicontraction satisfying the demi closedness
principle, respectively?

\item What should we suppose on operators $T$ and $U$ being
pseudocontractions or demicontractions, on relaxation parameters $\lambda
_{k}$, $k\geq 0$, and on extrapolation function $\sigma :\mathcal{H}%
\rightarrow \lbrack 1,+\infty )$, employed in an iterative process $%
x^{k+1}=x^{k}+\lambda _{k}\sigma (x^{k})(UT(x^{k})-x^{k})$ in order to
guarantee weak convergence of $x^{k}$ to a fixed point of $UT$?
\end{enumerate}

Answers on the part of questions 1 and 2 regarding strict pseudocontractions
were recently presented in \cite[Props 2.4 and 2.5]{BDP22}. The main results
of this paper are contained in Subsections \ref{ss-3.2} and \ref{ss-3.3},
and in Section \ref{s-4}, where we give answers to the remaining parts of
questions 1 and 2 and to question 3.

\section{Preliminaries}

In the whole paper $\mathcal{H}$ denotes a real Hilbert space with inner
product $\langle \cdot ,\cdot \rangle $ and the related norm $\Vert \cdot
\Vert $. We suppose that $\mathcal{H}$ is nontrivial, that is $\mathcal{H}%
\neq \{0\}$. For an operator $T:\mathcal{H}\rightarrow \mathcal{H}$ and $%
\lambda >0$, denote by $T_{\lambda }:=\limfunc{Id}+\lambda (T-\limfunc{Id})$
the $\lambda $\textit{-relaxation} of $T$, where $\limfunc{Id}$ denotes the
identity operator. If $\lambda \in (0,1)$ then $T_{\lambda }$ is a convex
combination of $T$ and $\limfunc{Id}$. Note that $(T_{\lambda })_{\mu
}=T_{\lambda \mu }$, $\lambda ,\mu >0$. The set $\limfunc{Fix}T:=\{z\in 
\mathcal{H}:Tz=z\}$ is called the \textit{fixed point set} and an element of 
$\limfunc{Fix}T$ is called a \textit{fixed point}. Below, we recall some
well known notions which we use in this paper.

\begin{definition}
\label{d-NE}%
\rm\ %
We say that an operator $T:\mathcal{H}\rightarrow \mathcal{H}$ is:

\begin{enumerate}
\item[(a)] \textit{nonexpansive} (NE), if for all $x,y\in \mathcal{H}$%
\begin{equation}
\Vert T(x)-T(y)\Vert \leq \Vert x-y\Vert \text{;}  \label{e-NE}
\end{equation}

\item[(b)] $\alpha $\textit{-averaged (}$\alpha $-AV), where $\alpha \in
(0,1)$, if there is an NE operator $S$ such that 
\begin{equation*}
T=(1-\alpha )\limfunc{Id}+\alpha S\text{;}
\end{equation*}

\item[(c)] \textit{firmly nonexpansive} (FNE), if for all $x,y\in \mathcal{H}
$ 
\begin{equation}
\langle T(x)-T(y),x-y\rangle \geq \Vert T(x)-T(y)\Vert ^{2}\text{;}
\label{e-FNE}
\end{equation}

\item[(d)] $\lambda $-relaxed firmly nonexpansive ($\lambda $-RFNE), where $%
\lambda >0$, if $T$ is a $\lambda $-relaxation of an FNE operator;

\item[(e)] an $\alpha $\textit{-strict pseudocontraction (}$\alpha $-SPC)%
\textit{, where }$\alpha \in (-\infty ,1)$, if for all $x,y\in \mathcal{H}$ 
\begin{equation}
\Vert T(x)-T(y)\Vert ^{2}\leq \Vert x-y\Vert ^{2}+\alpha \Vert
(T(x)-x)-(T(y)-y)\Vert ^{2}\text{;}  \label{e-SPC}
\end{equation}
\end{enumerate}
\end{definition}

Averaged operators were introduced in \cite{BBR78}, where many properties of
these operators were presented. FNE operators were studied in \cite{BR77},
in \cite{Rei77} and in \cite[Section 11]{GR84}. Strict pseudocontractions
were studied in \cite[Def. 1]{BP67} and in \cite{HK77} In \cite[Def. 2.1]%
{BDP22}, a $\lambda $-RFNE operator was called $\Theta $-conically averaged,
where $\Theta =\lambda /2$.

\begin{proposition}
\label{p-RFNE} Let $\lambda >0$. An operator $T:\mathcal{H}\rightarrow 
\mathcal{H}$ is $\lambda $-RFNE if and only if for all $x,y\in \mathcal{H}$ 
\begin{equation}
\langle y-x,(T(x)-x)-(T(y)-y)\rangle \geq \frac{1}{\lambda }\Vert
(T(x)-x)-(T(y)-y)\Vert ^{2}\text{.}  \label{e-RFNE}
\end{equation}
\end{proposition}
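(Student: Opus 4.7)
The plan is to prove the equivalence by direct substitution using the definition of a relaxation. Write $T = \operatorname{Id} + \lambda(S - \operatorname{Id})$; then $T$ is $\lambda$-RFNE iff $S$ is FNE. The crucial algebraic identity is
\[
S(x) - x = \tfrac{1}{\lambda}\bigl(T(x)-x\bigr),
\]
which lets us express both sides of the FNE inequality for $S$ purely in terms of $x-y$ and $v := (T(x)-x)-(T(y)-y)$.

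First I would set $u := x-y$ and $v := (T(x)-x)-(T(y)-y)$, so that $S(x)-S(y) = u + v/\lambda$. Next I would expand the FNE inequality $\langle S(x)-S(y),x-y\rangle \geq \|S(x)-S(y)\|^2$: the left-hand side becomes $\|u\|^2 + \tfrac{1}{\lambda}\langle v,u\rangle$, and the right-hand side becomes $\|u\|^2 + \tfrac{2}{\lambda}\langle u,v\rangle + \tfrac{1}{\lambda^2}\|v\|^2$. Subtracting, the $\|u\|^2$ terms cancel and after multiplying through by $\lambda$ (positive) the inequality reduces to $-\langle u,v\rangle \geq \tfrac{1}{\lambda}\|v\|^2$, that is,
\[
\langle y-x,v\rangle \geq \tfrac{1}{\lambda}\|v\|^2,
\]
which is exactly (\ref{e-RFNE}). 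Since every step is reversible, this proves both implications at once.

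The proof is essentially a one-line computation, so there is no real obstacle. The only thing to be careful about is bookkeeping of signs: the $\langle v,u\rangle$ on the LHS of the expanded FNE inequality combines with the $2\langle u,v\rangle$ from the RHS to give a $-\langle u,v\rangle$, which then flips to $\langle y-x,v\rangle$ on the desired side. Once this is laid out cleanly, the equivalence is immediate and the proposition follows.
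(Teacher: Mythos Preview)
Your proof is correct: the substitution $S(x)-x=\tfrac{1}{\lambda}(T(x)-x)$ reduces the FNE inequality for $S$ to~(\ref{e-RFNE}) via a reversible computation, and your sign bookkeeping is accurate. The paper itself does not give a proof but simply cites \cite[Th.~1]{BP67} and \cite[Cor.~2.2.3]{Ceg12}; your direct computation is the standard argument behind those references, so the approaches coincide.
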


\begin{proof}
See, \cite[Th. 1]{BP67} or \cite[Cor. 2.2.3]{Ceg12}.
\end{proof}

\begin{proposition}
\label{c-SPC-RFNE}Let $\lambda >0$. An operator $T:\mathcal{H}\rightarrow 
\mathcal{H}$ is $\lambda $-RFNE if and only if $T$ is an $\alpha $-SPC,
where $\alpha =(\lambda -2)/\lambda \in (-\infty ,1)$.
\end{proposition}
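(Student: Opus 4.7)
The plan is to reduce the statement to a direct algebraic identity and then invoke Proposition \ref{p-RFNE} as a black box. The observation is that both the $\lambda$-RFNE characterization (\ref{e-RFNE}) and the $\alpha$-SPC inequality (\ref{e-SPC}) are written in terms of the same two natural quantities, namely $x-y$ and $(T(x)-x)-(T(y)-y)$, so one should expect an exact equivalence after expanding one norm.

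First, I would introduce the shorthand $u:=T(x)-x$ and $v:=T(y)-y$, so that $T(x)-T(y)=(x-y)+(u-v)$. Expanding gives
\begin{equation*}
\|T(x)-T(y)\|^{2}=\|x-y\|^{2}+2\langle x-y,u-v\rangle +\|u-v\|^{2}.
\end{equation*}
Substituting this into (\ref{e-SPC}) and cancelling $\|x-y\|^{2}$, the $\alpha$-SPC condition is equivalent to
\begin{equation*}
2\langle y-x,u-v\rangle \geq (1-\alpha )\|u-v\|^{2}.
\end{equation*}
With the specified choice $\alpha=(\lambda-2)/\lambda$ we have $1-\alpha=2/\lambda$, so the above inequality is exactly
\begin{equation*}
\langle y-x,u-v\rangle \geq \tfrac{1}{\lambda}\|u-v\|^{2},
\end{equation*}
which is (\ref{e-RFNE}).

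Second, I would note that as $\alpha$ ranges over $(-\infty,1)$, the map $\lambda\mapsto(\lambda-2)/\lambda$ is a bijection from $(0,+\infty)$ onto $(-\infty,1)$, so the parameter correspondence is genuinely one-to-one and the equivalence is unambiguous. Finally, I would close by citing Proposition \ref{p-RFNE} to convert between (\ref{e-RFNE}) and the actual definition of $\lambda$-RFNE (a $\lambda$-relaxation of an FNE operator), completing both implications at once. There is no real obstacle here; the only thing to be careful about is keeping track of the sign when moving $\langle x-y,u-v\rangle$ to the other side, since the inner product changes sign when $x$ and $y$ are swapped.
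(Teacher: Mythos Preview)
Your proposal is correct. The algebraic identity you derive,
\[
\|T(x)-T(y)\|^{2}-\|x-y\|^{2}-\alpha\|u-v\|^{2}
=2\langle x-y,u-v\rangle+(1-\alpha)\|u-v\|^{2},
\]
together with $1-\alpha=2/\lambda$, does reduce the $\alpha$-SPC condition exactly to (\ref{e-RFNE}), and then Proposition~\ref{p-RFNE} finishes the job in both directions. The remark on the bijection $\lambda\mapsto(\lambda-2)/\lambda$ is also appropriate.

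Your route differs from the paper's in that the paper does not carry out this computation at all: it simply cites \cite[Cor.~2.2.15]{Ceg12} for the case $\lambda\in(0,2)$, observes that the same proof goes through for arbitrary $\lambda>0$, and points to \cite[Prop.~2.2(iii)]{BDP22}. What you gain is a self-contained argument that makes the equivalence transparent and shows explicitly why the restriction $\lambda\in(0,2)$ is irrelevant; what the paper gains is brevity by deferring to known references. Substantively the underlying computation is the same one hidden inside those citations, so there is no conceptual divergence, only a difference in presentation.
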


\begin{proof}
For $\lambda \in (0,2)$ the proposition was proved in \cite[Cor. 2.2.15]%
{Ceg12}. It follows from the proof of \cite[Cor. 2.2.15]{Ceg12}, that the
proposition is true for arbitrary $\lambda >0$. See also \cite[Prop. 2.2(iii)%
]{BDP22}.
\end{proof}

\bigskip

Clearly, an operator $T$ is NE if and only if $T$ is a $0$-SPC. Moreover,
Proposition \ref{c-SPC-RFNE} yields that an operator $T$ is $\frac{\lambda }{%
2}$-averaged (equivalently $\lambda $-RFNE with $\lambda \in (0,2)$) if and
only if $T$ is an $\alpha $-SPC, where $\alpha =1-\frac{2}{\lambda }<0$.

\begin{definition}
\label{d-QNE}%
\rm\ %
Let $T:\mathcal{H}\rightarrow \mathcal{H}$ be an operator with nonempty $%
\limfunc{Fix}T$. We say that $T$ is:

\begin{enumerate}
\item[(a)] \textit{quasi-nonexpansive} (QNE), if for all $x\in \mathcal{H}$
and $z\in \limfunc{Fix}T$%
\begin{equation}
\Vert T(x)-z\Vert \leq \Vert x-z\Vert \text{;}  \label{e-QNE}
\end{equation}

\item[(b)] $\rho $-\textit{strongly quasi-nonexpansive} ($\rho $-SQNE),
where $\rho \geq 0$, if for all $x\in \mathcal{H}$ and $z\in \limfunc{Fix}T$ 
\begin{equation}
\Vert T(x)-z\Vert ^{2}\leq \Vert x-z\Vert ^{2}-\rho \Vert T(x)-x\Vert ^{2}%
\text{;}  \label{e-SQNE}
\end{equation}%
If $\rho >0$ then we simply say that $T$ is SQNE;

\item[(c)] a \textit{cutter,} if for all $x\in \mathcal{H}$ and $z\in 
\limfunc{Fix}T$ 
\begin{equation}
\langle z-x,T(x)-x\rangle \geq \Vert T(x)-x\Vert ^{2}\text{;}  \label{e-cut}
\end{equation}

\item[(d)] a $\lambda $\textit{-relaxed cutter}, where $\lambda >0$, if $T$
is a $\lambda $-relaxation of a cutter, equivalently, for all $x\in \mathcal{%
H}$ and $z\in \limfunc{Fix}T$ 
\begin{equation}
\lambda \langle z-x,T(x)-x\rangle \geq \Vert T(x)-x\Vert ^{2}\text{;}
\label{e-RC}
\end{equation}

\item[(e)] an $\alpha $\textit{-demicontraction} (or an $\alpha $\textit{%
-demicontractive operator}), where $\alpha \in (-\infty ,1)$, if for all $%
x\in \mathcal{H}$ and $z\in \limfunc{Fix}T$ 
\begin{equation}
\Vert T(x)-z\Vert ^{2}\leq \Vert x-z\Vert ^{2}+\alpha \Vert T(x)-x\Vert ^{2}%
\text{.}  \label{e-DC}
\end{equation}
\end{enumerate}
\end{definition}

The name QNE operator was introduced by Dotson in \cite{Dot70} and the name
SQNE operator was introduced by Bruck in \cite{Bru82}. In \cite{Mar77} a
relaxed cutter was called an operator satisfying condition (A). An $\alpha $%
-strict pseudocontraction with a fixed point is an $\alpha $%
-demicontraction. Clearly, if $\alpha \leq 0$ then an $\alpha $%
-demicontraction is $\rho $-SQNE, where $\rho =-\alpha $, so the notion of a
demicontraction is an extension of the notion of an SQNE operator. An
example of FNE operator is the metric projection $P_{C}$ onto a closed
convex subset $C\subseteq \mathcal{H}$. In particular, for an affine
subspace $H\subseteq \mathcal{H}$ (e.g. a hyperplane), for all $x\in 
\mathcal{H}$ and $z\in H$ it holds 
\begin{equation*}
\langle z-x,P_{H}(x)-x\rangle =\Vert P_{H}(x)-x\Vert ^{2}\text{.}
\end{equation*}%
Thus, for any $\lambda >0$ and a $\lambda $-relaxed projection $%
(P_{H})_{\lambda }$ it holds 
\begin{equation*}
\langle z-x,(P_{H})_{\lambda }(x)-x\rangle =\frac{1}{\lambda }\Vert
(P_{H})_{\lambda }(x)-x\Vert ^{2}\text{,}
\end{equation*}%
$x\in \mathcal{H}$ and $z\in H$. Further properties of NE, AV, FNE, RFNE,
QNE, SQNE operators, cutters and relaxed cutters as well as relations among
these operators which we use in this paper can be found, e.g. in \cite[Secs
2.1 and 2.2]{Ceg12}. Below, we recall relations of demicontractions to
relaxed cutters and to SQNE operators. The following results are well known,
see, e.g. \cite[Rem. 2.3]{MaiM11}.

\begin{theorem}
\label{t-c-dc}Let $T:\mathcal{H}\rightarrow \mathcal{H}$ have a fixed point
and $\lambda >0$. The following conditions are equivalent:

\begin{enumerate}
\item[$\mathrm{(i)}$] $T$ is a cutter;

\item[$\mathrm{(ii)}$] $T_{\lambda }$ is an $\alpha $-demicontraction with $%
\alpha =(\lambda -2)/\lambda \in (-\infty ,1)$.
\end{enumerate}
\end{theorem}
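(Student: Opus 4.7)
My plan is to show the equivalence by direct algebraic manipulation, exploiting the two simple facts that $\mathrm{Fix}\,T_\lambda=\mathrm{Fix}\,T$ and $T_\lambda(x)-x=\lambda(T(x)-x)$. I do not expect any serious obstacle; the content of the theorem is essentially that the defining inequalities of a cutter and of an $\alpha$-demicontraction become the same inequality after rescaling by $\lambda$, provided $\alpha=(\lambda-2)/\lambda$.

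First I would observe that $T_\lambda z=z$ iff $\lambda(Tz-z)=0$ iff $Tz=z$, so $\mathrm{Fix}\,T_\lambda=\mathrm{Fix}\,T$ is nonempty. Pick arbitrary $x\in\mathcal{H}$ and $z\in\mathrm{Fix}\,T$. I would then expand
\begin{equation*}
\|T_\lambda(x)-z\|^2=\|(T_\lambda(x)-x)-(z-x)\|^2=\|T_\lambda(x)-x\|^2-2\langle T_\lambda(x)-x,z-x\rangle+\|x-z\|^2,
\end{equation*}
so that the defining inequality (\ref{e-DC}) of an $\alpha$-demicontraction for $T_\lambda$ is equivalent to
\begin{equation*}
(1-\alpha)\|T_\lambda(x)-x\|^2\leq 2\langle T_\lambda(x)-x,z-x\rangle.
\end{equation*}

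Next I would substitute $T_\lambda(x)-x=\lambda(T(x)-x)$ and $1-\alpha=2/\lambda$. This turns the left side into $(2/\lambda)\lambda^2\|T(x)-x\|^2=2\lambda\|T(x)-x\|^2$ and the right side into $2\lambda\langle T(x)-x,z-x\rangle$. Dividing by $2\lambda>0$ produces exactly the cutter inequality (\ref{e-cut}). Since every step is an equivalence valid for all $x\in\mathcal{H}$ and $z\in\mathrm{Fix}\,T$, this proves the equivalence of (i) and (ii). Finally, I would note that $\alpha=(\lambda-2)/\lambda\in(-\infty,1)$ for any $\lambda>0$, which confirms admissibility of $\alpha$ in Definition \ref{d-QNE}(e).
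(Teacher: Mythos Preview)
Your proof is correct. The paper's own proof is not self-contained: it cites \cite[Th.~2.1.39]{Ceg12} for the case $\lambda\in(0,2]$ (where the result is phrased in terms of $\rho$-SQNE operators with $\rho=(2-\lambda)/\lambda$) and then remarks that the same argument goes through for arbitrary $\lambda>0$. Your approach supplies exactly that underlying argument directly: the polarisation identity $\Vert T_\lambda(x)-z\Vert^2=\Vert x-z\Vert^2+\Vert T_\lambda(x)-x\Vert^2-2\langle z-x,T_\lambda(x)-x\rangle$ together with the substitutions $T_\lambda(x)-x=\lambda(T(x)-x)$ and $1-\alpha=2/\lambda$ collapses the demicontraction inequality to the cutter inequality, with every step reversible. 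So the two proofs are really the same computation; yours is simply explicit and self-contained, whereas the paper delegates the work to the reference.
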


\begin{proof}
If $\lambda \in (0,2]$ then it follows from \cite[Th. 2.1.39]{Ceg12} that $T$
is a cutter if and only if $T_{\lambda }$ is $\rho $-SQNE with $\rho
:=(2-\lambda )/\lambda \in \lbrack 0,+\infty )$. The second part of this
equivalence means that $T_{\lambda }$ is an $\alpha $-demicontraction with $%
\alpha =-\rho =(\lambda -2)/\lambda \in (-\infty ,0]$. It follows from the
proof of \cite[Th. 2.1.39]{Ceg12} that the theorem is true for arbitrary $%
\lambda >0$. If $\lambda >2$ then $\alpha =(\lambda -2)/\lambda \in (0,1)$.
\end{proof}

\bigskip

Because the function $f:(0,+\infty )\rightarrow (-\infty ,1)$ defined by $%
f(\lambda )=(\lambda -2)/\lambda $ is a bijection, Theorem \ref{t-c-dc}
states that every demicontraction can be treated as a relaxation of a cutter
and vice versa. Replacing $T_{\lambda }$ by $S$ in Theorem \ref{t-c-dc},
setting $\lambda :=2/(1-\alpha )$ for $\alpha \in (-\infty ,1)$ we obtain
the following result which is equivalent to Theorem \ref{t-c-dc}.

\begin{corollary}
\label{c-dc-c}Let $S:\mathcal{H}\rightarrow \mathcal{H}$ have a fixed point, 
$\alpha \in (-\infty ,1)$ and $\lambda =2/(1-\alpha )\in (0,+\infty )$. Then
the following conditions are equivalent:

\begin{enumerate}
\item[$\mathrm{(i)}$] $S$ is an $\alpha $-demicontraction;

\item[$\mathrm{(ii)}$] $S$ is a $\lambda $-relaxed cutter.
\end{enumerate}
\end{corollary}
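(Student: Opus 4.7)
The plan is to obtain this corollary as a direct reformulation of Theorem \ref{t-c-dc} via the change of variable $\lambda = 2/(1-\alpha)$, which is a bijection between $\alpha \in (-\infty,1)$ and $\lambda \in (0,+\infty)$.

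First, I would check the algebraic consistency: with $\lambda = 2/(1-\alpha)$ one has $1-\alpha = 2/\lambda$, hence $(\lambda-2)/\lambda = 1 - 2/\lambda = \alpha$. So the parameter used in Theorem \ref{t-c-dc} matches the $\alpha$ given here.

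Next, given the operator $S$ in the statement, I would define $T := \limfunc{Id} + \lambda^{-1}(S - \limfunc{Id})$, so that $T_{\lambda} = \limfunc{Id} + \lambda(T - \limfunc{Id}) = S$. Since $\lambda > 0$, the equations $T(x) = x$ and $S(x) = x$ are equivalent, so $\limfunc{Fix}T = \limfunc{Fix}S \neq \emptyset$, and Theorem \ref{t-c-dc} applies to $T$.

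Finally, I would combine the two equivalences. By Theorem \ref{t-c-dc}, $T$ is a cutter if and only if $T_{\lambda} = S$ is an $\alpha$-demicontraction (with the value of $\alpha$ verified in the first step). By Definition \ref{d-QNE}(d), the statement that $T$ is a cutter is precisely the statement that $S = T_{\lambda}$ is a $\lambda$-relaxed cutter. Chaining these two equivalences yields (i) $\Leftrightarrow$ (ii). The only potential obstacle is purely cosmetic bookkeeping of the parameter correspondence, which the paper in fact already anticipates in the sentence preceding the statement.
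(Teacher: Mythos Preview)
Your proof is correct and follows exactly the approach the paper itself indicates: the paper states that the corollary is obtained from Theorem~\ref{t-c-dc} by replacing $T_{\lambda}$ with $S$ and using the bijection $\lambda=2/(1-\alpha)$, which is precisely what you carry out (with somewhat more detail than the paper provides).
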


Theorem \ref{t-c-dc} and Corollary \ref{c-dc-c} yields that a relaxation of
a demicontraction is again a demicontraction (cf. \cite[Lem. 3.1]{Ber23}).

\begin{corollary}
\label{c-dc-dc}Let $S:\mathcal{H}\rightarrow \mathcal{H}$ have a fixed
point, $\alpha \in (-\infty ,1)$ and $\mu >0$. Then the following conditions
are equivalent:

\begin{enumerate}
\item[$\mathrm{(i)}$] $S$ is an $\alpha $-demicontraction;

\item[$\mathrm{(ii)}$] $S_{\mu }$ is a $\beta $-demicontraction, where $%
\beta =(\mu +\alpha -1)/\mu \in (-\infty ,1)$.
\end{enumerate}
\end{corollary}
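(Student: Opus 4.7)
The natural strategy is to translate both conditions into the relaxed-cutter language via Corollary \ref{c-dc-c}, and then exploit the identity $(T_{\lambda})_{\mu}=T_{\lambda \mu}$ noted in the Preliminaries. Since the map $\alpha \mapsto 2/(1-\alpha)$ is a bijection between $(-\infty,1)$ and $(0,+\infty)$, this reduces the equivalence to a single composition of relaxations, provided the parameter arithmetic works out.

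First I would handle $\mathrm{(i)}\Rightarrow \mathrm{(ii)}$. By Corollary \ref{c-dc-c} applied with $\lambda :=2/(1-\alpha )$, the operator $S$ is a $\lambda $-relaxed cutter, so there exists a cutter $C$ with $S=C_{\lambda }$. Then
\begin{equation*}
S_{\mu }=(C_{\lambda })_{\mu }=C_{\lambda \mu },
\end{equation*}
so $S_{\mu }$ is a $(\lambda \mu )$-relaxed cutter. Applying Corollary \ref{c-dc-c} in the reverse direction to $S_{\mu }$ identifies it as a $\beta $-demicontraction with $1-\beta =2/(\lambda \mu )=(1-\alpha )/\mu $, which rearranges to $\beta =(\mu +\alpha -1)/\mu $, as required; moreover $\beta <1$ because $1-\beta =(1-\alpha )/\mu >0$.

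The converse $\mathrm{(ii)}\Rightarrow \mathrm{(i)}$ is symmetric. Assuming $S_{\mu }$ is a $\beta $-demicontraction with $\beta =(\mu +\alpha -1)/\mu $, Corollary \ref{c-dc-c} furnishes a cutter $D$ with $S_{\mu }=D_{2/(1-\beta )}=D_{\lambda \mu }$. Relaxing by $1/\mu $ and using $(T_{\lambda })_{\mu }=T_{\lambda \mu }$ one more time,
\begin{equation*}
S=(S_{\mu })_{1/\mu }=(D_{\lambda \mu })_{1/\mu }=D_{\lambda },
\end{equation*}
so $S$ is a $\lambda $-relaxed cutter, and Corollary \ref{c-dc-c} delivers that $S$ is an $\alpha $-demicontraction.

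There is no genuine obstacle here; the only point that deserves a moment of attention is that the three sets $\mathrm{Fix}\,S$, $\mathrm{Fix}\,S_{\mu }$ and $\mathrm{Fix}\,C$ (respectively $\mathrm{Fix}\,D$) coincide, so that the cutter and demicontraction inequalities refer to the same reference points $z$. This is immediate from $\mathrm{Fix}\,T_{\nu }=\mathrm{Fix}\,T$ for any $\nu >0$, which ensures that the hypothesis that $S$ has a fixed point is preserved throughout and that Corollary \ref{c-dc-c} is legitimately applicable on both ends.
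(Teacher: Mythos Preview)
Your proof is correct and follows exactly the route the paper intends: the paper does not spell out a proof but simply notes that the result follows from Theorem~\ref{t-c-dc} and Corollary~\ref{c-dc-c}, and you have carried out precisely that reduction via the relaxed-cutter bijection $\alpha\mapsto 2/(1-\alpha)$ together with $(T_{\lambda})_{\mu}=T_{\lambda\mu}$. The parameter arithmetic and the remark on $\mathrm{Fix}\,S=\mathrm{Fix}\,S_{\mu}$ are both sound.
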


The notion of the demi closedness principle defined below is important for
the convergence properties of algorithms employing relaxed cutters.

\begin{definition}
\rm\ %
We say that an operator $T:\mathcal{H}\rightarrow \mathcal{H}$ satisfies the 
\textit{demi closedness principle} if $T-\limfunc{Id}$ is \textit{demiclosed}
at $0$, i.e. for any bounded sequence $\{x^{k}\}_{k=0}^{\infty }$ with $%
\Vert T(x^{k})-x^{k}\Vert \rightarrow 0$ and for its weak cluster point $y$
it holds that $y\in \limfunc{Fix}T$.
\end{definition}

In some publications the notion \textit{weak regularity} of $T$ is applied
for an operator $T$ satisfying the demi closedness principle, see, e.g. \cite%
{CRZ18}. It is well known that a nonexpansive operator satisfies the demi
closedness principle \cite{Opi67}. Obviously, a relaxation of an operator
satisfying the demi closedness principle also satisfies the demi closedness
principle. In particular, a strict pseudocontraction, as a relaxation of an
FNE operator, satisfies the demi closedness principle. Moreover, a convex
combination as well as composition of SQNE operators that share a common
fixed point and satisfy the demi closedness principle also satisfies the
demi closedness principle \cite[Thms 4.1 and 4.2]{Ceg15}. In what follows,
we give conditions under which strict pseudocontractions as well as
demicontractions also share these properties. It is well known that fixed
point iterations employing RFNE operators as well as algorithms employing
relaxed cutters (or, equivalently, SQNE operators) that satisfy the demi
closedness principle generate sequences which converge weakly, under the
assumption that the relaxation parameters are in $(0,2)$. We briefly recall
corresponding results. Let $V:\mathcal{H}\rightarrow \mathcal{H}$ be an
operator with a fixed point. Consider the following iteration 
\begin{equation}
x^{k+1}=V_{\nu _{k}}(x^{k})\text{,}  \label{e-iterT-lamk}
\end{equation}%
where $x^{0}\in \mathcal{H}$ is arbitrary and $\nu _{k}\geq 0$ is a
relaxation parameter applied in the $k$-th iteration, $k\geq 0$. The
following result which is due to Reich \cite[Theorem 2]{Rei79} is also known
in the literature as the Krasnosel'ski\u{\i}-Mann theorem (see, e.g. \cite[%
Thms 2.1 and 2.2]{Byr04}).

\begin{proposition}
\label{p-KM}If $V$ is FNE with $\limfunc{Fix}V\neq \emptyset $, $\nu _{k}\in
\lbrack \varepsilon ,2-\varepsilon ]$ for some $\varepsilon \in (0,1)$ and $%
x^{k}$ is generated by iteration $\mathrm{(\ref{e-iterT-lamk})}$ then $x^{k}$
converges weakly to an element of $\limfunc{Fix}V$.
\end{proposition}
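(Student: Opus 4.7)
The plan is to establish weak convergence by the standard three-step route: exhibit the iteration operators $V_{\nu_k}$ as uniformly strongly quasi-nonexpansive, derive asymptotic regularity $\Vert V(x^k)-x^k\Vert \to 0$, and close the argument with the demi closedness principle combined with Opial's lemma.

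First I would convert the hypothesis on $V$ into a uniform bound on the $V_{\nu_k}$. Since $V$ is FNE, Proposition \ref{c-SPC-RFNE} with $\lambda=1$ gives that $V$ is a $(-1)$-SPC, and in particular nonexpansive. Setting $y=z\in\limfunc{Fix}V$ in (\ref{e-FNE}) and rearranging shows that $V$ is a cutter. Hence $V_{\nu_k}$ is a $\nu_k$-relaxed cutter and, by Corollary \ref{c-dc-c}, a $\beta_k$-demicontraction with $\beta_k=(\nu_k-2)/\nu_k$. The upper bound $\nu_k\leq 2-\varepsilon$ forces $\beta_k\leq -\varepsilon/(2-\varepsilon)=:-\rho$ with $\rho>0$ independent of $k$, which says exactly that $V_{\nu_k}$ is $\rho$-SQNE; moreover $\nu_k>0$ gives $\limfunc{Fix}V_{\nu_k}=\limfunc{Fix}V$.

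Next I would extract the asymptotic regularity. For any fixed $z\in\limfunc{Fix}V$, the $\rho$-SQNE inequality applied to the iteration (\ref{e-iterT-lamk}) reads
\begin{equation*}
\Vert x^{k+1}-z\Vert^{2}\leq \Vert x^{k}-z\Vert^{2}-\rho \Vert x^{k+1}-x^{k}\Vert^{2}.
\end{equation*}
Telescoping yields boundedness of $\{x^{k}\}$ and $\sum_{k=0}^{\infty}\Vert x^{k+1}-x^{k}\Vert^{2}<\infty$, hence $\Vert x^{k+1}-x^{k}\Vert \to 0$. Writing $x^{k+1}-x^{k}=\nu_{k}(V(x^{k})-x^{k})$ and using $\nu_{k}\geq \varepsilon$ then gives $\Vert V(x^{k})-x^{k}\Vert \to 0$.

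Finally, because $V$ is nonexpansive, the demi closedness principle at $0$ holds for $V$ (by Opial). Thus every weak cluster point $y$ of the bounded sequence $\{x^{k}\}$ satisfies $y\in \limfunc{Fix}V$. Since $\{x^{k}\}$ is Fej\'{e}r monotone with respect to the nonempty closed convex set $\limfunc{Fix}V$, Opial's classical lemma (a Fej\'{e}r monotone sequence all of whose weak cluster points lie in the target set converges weakly to a single such point) delivers $x^{k}\rightharpoonup y\in \limfunc{Fix}V$. The argument contains no serious obstacle: every ingredient is classical and already cited in the excerpt. The only point requiring care is that the cutter-to-demicontraction correspondence furnishes a SQNE constant $\rho$ that is uniform in $k$, which is precisely why the hypothesis bounds $\nu_{k}$ away from both $0$ and $2$.
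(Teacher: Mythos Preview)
Your argument is correct. Note, however, that the paper does not supply a proof of Proposition~\ref{p-KM}: it is stated as a classical result attributed to Reich \cite[Theorem~2]{Rei79} (the Krasnosel'ski\u{\i}--Mann theorem), so there is no in-paper proof to compare against. What you have written is precisely the standard route and uses only machinery already available in the preliminaries: the FNE operator $V$ with a fixed point is a cutter (your rearrangement of (\ref{e-FNE}) at $y=z$ is valid), the relaxations $V_{\nu_k}$ are uniformly $\rho$-SQNE with $\rho=\varepsilon/(2-\varepsilon)$ via Theorem~\ref{t-c-dc}/Corollary~\ref{c-dc-c}, telescoping gives Fej\'{e}r monotonicity and asymptotic regularity $\Vert V(x^k)-x^k\Vert\to 0$, and Opial's demiclosedness principle for the nonexpansive $V$ together with Fej\'{e}r monotonicity forces weak convergence to a single fixed point. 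Every step is sound, and your observation that the two-sided bound $\nu_k\in[\varepsilon,2-\varepsilon]$ is exactly what makes the SQNE constant uniform in $k$ is the right point to highlight.
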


Many variants of the following result are well known (see, e.g. \cite[Cor.
3.7.3]{Ceg12} or its more general versions \cite[Th. 2.9]{BC01} or \cite[Th.
6.1(i)]{CRZ18}).

\begin{proposition}
\label{p-WC}If $V$ is a cutter satisfying the demi closedness principle, $%
\nu _{k}\in \lbrack \varepsilon ,2-\varepsilon ]$ for some $\varepsilon \in
(0,1)$ and $x^{k}$ is generated by iteration $\mathrm{(\ref{e-iterT-lamk})}$
then $x^{k}$ converges weakly to an element of $\limfunc{Fix}V$.
\end{proposition}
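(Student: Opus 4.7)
The plan is to prove the proposition via the now-standard three-ingredient scheme: Fejér monotonicity, asymptotic regularity $\|V(x^{k})-x^{k}\|\to 0$, and an application of Opial's lemma using the demi closedness principle.

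First, I would exploit the cutter inequality to get Fejér monotonicity with a useful quantitative surplus. Fix $z\in\limfunc{Fix}V$. Writing $x^{k+1}=x^{k}+\nu_{k}(V(x^{k})-x^{k})$, expanding the square, and using the cutter inequality $\langle z-x^{k},V(x^{k})-x^{k}\rangle\geq\|V(x^{k})-x^{k}\|^{2}$, I would obtain
\begin{equation*}
\Vert x^{k+1}-z\Vert^{2}\leq\Vert x^{k}-z\Vert^{2}-\nu_{k}(2-\nu_{k})\Vert V(x^{k})-x^{k}\Vert^{2}.
\end{equation*}
(Equivalently, one invokes Theorem \ref{t-c-dc}: $V_{\nu_{k}}$ is an $\alpha_{k}$-demicontraction with $\alpha_{k}=(\nu_{k}-2)/\nu_{k}\leq 0$, so it is $\rho_{k}$-SQNE with $\rho_{k}=(2-\nu_{k})/\nu_{k}$, which yields the same estimate after multiplying by $\nu_{k}^{2}$.)

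Next, since $\nu_{k}\in[\varepsilon,2-\varepsilon]$, we have $\nu_{k}(2-\nu_{k})\geq\varepsilon(2-\varepsilon)>0$. The sequence $\{\Vert x^{k}-z\Vert\}$ is therefore nonincreasing and in particular $\{x^{k}\}$ is bounded; telescoping the displayed inequality gives
\begin{equation*}
\varepsilon(2-\varepsilon)\sum_{k=0}^{\infty}\Vert V(x^{k})-x^{k}\Vert^{2}\leq\Vert x^{0}-z\Vert^{2}<+\infty,
\end{equation*}
so $\Vert V(x^{k})-x^{k}\Vert\to 0$.

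Now I would pass to weak cluster points. Because $\{x^{k}\}$ is bounded it admits weak cluster points, and since $V$ satisfies the demi closedness principle and $\Vert V(x^{k})-x^{k}\Vert\to 0$, every such cluster point lies in $\limfunc{Fix}V$. To pass from "all weak cluster points belong to $\limfunc{Fix}V$" to actual weak convergence, I would apply Opial's lemma: combined with Fejér monotonicity (which ensures $\Vert x^{k}-z\Vert$ converges for each $z\in\limfunc{Fix}V$), this forces the weak cluster point to be unique. I expect the first two steps to be essentially computational and the third step — ruling out the existence of two distinct weak cluster points via Opial's lemma — to be the only place where a small argument by contradiction is needed; this is the classical part of the proof and the only mild obstacle.
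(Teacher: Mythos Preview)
Your argument is correct and is precisely the standard Fej\'er-monotone/asymptotic-regularity/Opial scheme. Note, however, that the paper does not actually supply a proof of this proposition: it is stated as a known result with references to \cite[Cor.~3.7.3]{Ceg12}, \cite[Th.~2.9]{BC01} and \cite[Th.~6.1(i)]{CRZ18}, and the argument you wrote is essentially the one underlying those citations.
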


An equivalent formulation of the proposition below was proposed by M\u{a}ru%
\c{s}ter in \cite[Th. 1]{Mar77}. By the first look this result seems to be
more general than Proposition \ref{p-WC} (see also \cite[Th. 12]{BP67} for a
related result).

\begin{proposition}
\label{t-Mar77}Let $\alpha \in (-\infty ,1)$, $V:\mathcal{H}\rightarrow 
\mathcal{H}$ be an $\alpha $-demicontraction satisfying the demi closedness
principle and the sequence $\{x^{k}\}_{k=0}^{\infty }$ be generated by
iteration $\mathrm{(\ref{e-iterT-lamk})}$. If $\nu _{k}\in \lbrack
\varepsilon ,1-\alpha -\varepsilon ]$ for some $\varepsilon \in (0,\frac{%
1-\alpha }{2})$, then $x^{k}$ converges weakly to an element of $\limfunc{Fix%
}V$.
\end{proposition}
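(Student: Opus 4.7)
The plan is to reduce the statement to Proposition \ref{p-WC} via Corollary \ref{c-dc-c}, which identifies an $\alpha$-demicontraction with a $\lambda$-relaxed cutter for $\lambda := 2/(1-\alpha) \in (0,+\infty)$. Accordingly, I would write $V = U_{\lambda}$, where $U := \limfunc{Id} + \lambda^{-1}(V-\limfunc{Id})$ is a cutter with $\limfunc{Fix}U = \limfunc{Fix}V \neq \emptyset$.

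The first step is to recast the iteration as a relaxed-cutter iteration for $U$. Using the semigroup identity $(T_{\lambda})_{\mu} = T_{\lambda\mu}$ recalled in Section~2, I have
\begin{equation*}
x^{k+1} = V_{\nu_{k}}(x^{k}) = (U_{\lambda})_{\nu_{k}}(x^{k}) = U_{\mu_{k}}(x^{k}),
\end{equation*}
with $\mu_{k} := \lambda \nu_{k}$. So (\ref{e-iterT-lamk}) applied to $V$ with parameters $\nu_{k}$ is literally (\ref{e-iterT-lamk}) applied to the cutter $U$ with parameters $\mu_{k}$.

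The second step is to verify that the hypotheses of Proposition \ref{p-WC} hold for $U$ and the sequence $\{\mu_{k}\}$. Because $U-\limfunc{Id} = \lambda^{-1}(V-\limfunc{Id})$, the residual norms $\|U(x)-x\|$ and $\|V(x)-x\|$ are proportional, so the demi closedness principle transfers from $V$ to $U$. For the parameters, with $\nu_{k} \in [\varepsilon, 1-\alpha-\varepsilon]$ and $\lambda = 2/(1-\alpha)$ one computes
\begin{equation*}
\mu_{k} \in \left[\tfrac{2\varepsilon}{1-\alpha},\ 2 - \tfrac{2\varepsilon}{1-\alpha}\right] = [\varepsilon',\, 2-\varepsilon'],
\end{equation*}
where $\varepsilon' := 2\varepsilon/(1-\alpha)$; the assumption $\varepsilon \in (0,(1-\alpha)/2)$ is exactly what ensures $\varepsilon' \in (0,1)$.

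With these pieces in place, Proposition \ref{p-WC} applied to $U$ delivers weak convergence of $\{x^{k}\}$ to a point of $\limfunc{Fix}U = \limfunc{Fix}V$, which finishes the proof. I do not expect a real obstacle here: the substantive conceptual move is the equivalence in Corollary \ref{c-dc-c}, after which the argument reduces to rescaling relaxation intervals and observing that demi closedness, fixed point sets, and the residual $T-\limfunc{Id}$ are all preserved (up to a positive scalar) when passing between $V$ and the underlying cutter $U$.
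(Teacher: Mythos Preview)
Your proposal is correct and matches the paper's own argument, which is given in the Remark immediately following the proposition: the paper likewise sets $T:=V_{(1-\alpha)/2}$ (your $U=V_{1/\lambda}$), rewrites $V_{\nu_k}=T_{\lambda_k}$ with $\lambda_k=\frac{2\nu_k}{1-\alpha}\in[\frac{2\varepsilon}{1-\alpha},\,2-\frac{2\varepsilon}{1-\alpha}]$, and invokes Proposition~\ref{p-WC}. Your additional remarks about the transfer of demi closedness and of the fixed point set under relaxation are accurate and make the reduction slightly more explicit than in the paper.
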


\begin{remark}
\rm\ %
If $\alpha =-1$ in Proposition \ref{t-Mar77} then, by Corollary \ref{c-dc-c}%
, $V$ is a cutter ($1$-RFNE operator). Thus, Proposition \ref{p-WC} follows
from Proposition \ref{t-Mar77}. Nevertheless, Proposition \ref{t-Mar77} can
also be reduced to Proposition \ref{p-WC}. Indeed. Suppose that $V$ is an $%
\alpha $-demicontraction and $\nu _{k}\in \lbrack \varepsilon ,1-\alpha
-\varepsilon ]$ for some $\varepsilon \in (0,\frac{1-\alpha }{2})$. By
Corollary \ref{c-dc-c}, $T:=V_{\frac{1-\alpha }{2}}$ is a cutter.
Consequently, $V_{\nu _{k}}=T_{\lambda _{k}}$, where $\lambda _{k}=\frac{%
2\nu _{k}}{1-\alpha }\in \lbrack \frac{2\varepsilon }{1-\alpha },2-\frac{%
2\varepsilon }{1-\alpha }]$, i.e. $V$ satisfies the assumptions of
Proposition \ref{p-WC}. We see that the assumption that $V$ is an $\alpha $%
-demicontraction in Proposition \ref{t-Mar77} is superfluous. It is enough
to suppose that $V$ is a cutter and $\nu _{k}\in \lbrack \varepsilon
,2-\varepsilon ]$ for some $\varepsilon \in (0,1)$ or, equivalently, that $V$
is quasi-nonexpansive and $\nu _{k}\in \lbrack \varepsilon ,1-\varepsilon ]$
for some $\varepsilon \in (0,1/2)$.
\end{remark}

Many iterations studied in the last decades can be presented as special
cases of (\ref{e-iterT-lamk}). We recall here two algorithms.

\begin{example}
\label{ex-DR}%
\rm\ %
Let $T$ and $U$ be FNE. Define $V:=U_{2}T_{2}$ ($T_{2}$ and $U_{2}$ are NE
as $2$-relaxations of FNE operators) and let $\nu _{k}=\nu =\frac{1}{2}$.
Then $U_{2}T_{2}$ is NE as composition of NE operators, consequently $V_{\nu
}$ is FNE and (\ref{e-iterT-lamk}) with $\nu _{k}=\frac{1}{2}$ is, actually,
the \textit{averaged alternating reflection method} (also called \textit{%
Douglas--Rachford method}), see \cite{BC17}. Clearly, for arbitrary $\nu \in
(0,1)$ the operator $V_{\nu }$ is averaged. If, additionally, $\limfunc{Fix}%
V\neq \emptyset $ then $V_{\nu }$ is an SQNE operator satisfying the demi
closedness principle, thus, for any sequence $\{x^{k}\}_{k=0}^{\infty }$
generated by the iteration $x^{k+1}=V_{\nu _{k}}x^{k}$ where $\nu _{k}\in
\lbrack \varepsilon ,1-\varepsilon ]$ for some $\varepsilon \in (0,1/2)$,
converges weakly to a fixed point of $V$. Now define $V:=U_{\mu }T_{\lambda
} $ where $\lambda ,\mu >0$ (or, equivalently, $V$ is composition of an $%
\alpha $- and a $\beta $-SPC, where $\alpha ,\beta \in (-\infty ,1)$). Under
what conditions on $\lambda ,\mu >0$ (equivalently on $\alpha ,\beta \in
(-\infty ,1)$) and $\nu _{k}$ any sequence $\{x^{k}\}_{k=0}^{\infty }$
generated by iteration (\ref{e-iterT-lamk}) converges weakly to a fixed
point of $V$? In particular, may one of the parameters $\lambda $ or $\mu $
be greater than $2$? For example, if $V=U_{\mu }T_{\lambda }$, where $%
\lambda +\mu =4$, does the convergence hold for some $\nu _{k}$? Answer on
this questions follows from \cite[Props 2.5 and 2.9]{BDP22}. Natural
questions arise at this point. Does the convergence remain true if we
suppose that $T$ and $U$ are cutter operators that share a common fixed
point and satisfy the demi closedness principle instead of the assumption
that $T$ and $U$ are FNE and $\limfunc{Fix}V\neq \emptyset $? Under what
conditions on $\lambda $ and $\mu $ it holds $\limfunc{Fix}V=\limfunc{Fix}%
T\cap \limfunc{Fix}U$? These questions will be answered in Sections \ref{s-3}
and \ref{s-4}.
\end{example}

\begin{example}
\label{ex-CQ}%
\rm\ %
In \cite{Mou10} Moudafi considered the following \textit{split common fixed
point problem} (SCFPP) 
\begin{equation}
\text{find }x^{\ast }\in \limfunc{Fix}U\text{ such that }Ax^{\ast }\in 
\limfunc{Fix}T\text{,}  \label{e-SCFP}
\end{equation}%
where $A:\mathcal{H}_{1}\rightarrow \mathcal{H}_{2}$ is a nonzero bounded
linear operator, $\mathcal{H}_{1}\ $and $\mathcal{H}_{2}$ are two real
Hilbert spaces, $S:\mathcal{H}_{2}\rightarrow \mathcal{H}_{2}$ is an $\alpha 
$-demicontraction with $\limfunc{Fix}S=Q$, $U:$ $\mathcal{H}_{1}\rightarrow 
\mathcal{H}_{1}$ is a $\beta $-demicontraction with $\limfunc{Fix}U=C$, both
satisfying the demi closedness principle, and $F:=C\cap A^{-1}(Q)\neq
\emptyset $. Moudafi proposed the following iteration for solving the SCFPP%
\begin{equation}
x^{k+1}=U_{\mu _{k}}T_{\lambda }(x^{k})\text{,}  \label{e-iter-Mou10}
\end{equation}%
where $x^{0}\in \mathcal{H}_{1}$ is arbitrary, $\mu _{k}\in (\varepsilon
,1-\beta -\varepsilon )$ for some $\varepsilon \in (0,\frac{1-\beta }{2})$, $%
\lambda \in (0,1-\alpha )$ and $T$ is the Landweber operator related to $S$,
i.e.%
\begin{equation}
T(x):=\mathcal{L}\{S\}(x)=x+\frac{1}{\Vert A\Vert ^{2}}A^{\ast }(S(Ax)-Ax)
\label{e-LT}
\end{equation}%
(see \cite{CRZ20} for a definition of the Landweber transform $\mathcal{L}%
\{\cdot \}$). Moudafi proved that for arbitrary $x^{0}\in \mathcal{H}_{1}$
the sequence $\{x^{k}\}_{k=0}^{\infty }$ generated by iteration (\ref%
{e-iter-Mou10}), where $T$ is given by (\ref{e-LT}), converges weakly to an
element of $F$ \cite[Th. 2.1]{Mou10}. Related algorithms for the so called 
\textit{multiple split fixed point problems }(MSFPP) with cyclic application
of Landweber transform were studied in \cite{TPL12} and \cite{WC13}. Moudafi
supposed that $\alpha ,\beta \in \lbrack 0,1)$, but the convergence also
holds for arbitrary $\alpha ,\beta \in (-\infty ,1)$. Indeed. Suppose for
simplicity, that $\mu _{k}$ is constant, i.e. $\mu _{k}=\mu \in (0,1-\beta )$
for all $k\geq 0$. Similarly as in \cite[Lem. 4.1]{Ceg15} one can prove,
that $T$ is an $\alpha $-demicontraction and $\limfunc{Fix}T=A^{-1}(\limfunc{%
Fix}S)$. By Corollary \ref{c-dc-dc}, $T_{\lambda }$ is a $\gamma $%
-demicontraction, where $\gamma =1-\frac{1-\alpha }{\lambda }<0$, i.e. $%
T_{\lambda }$ is $-\gamma $-SQNE, and $U_{\mu }$ is a $\delta $%
-demicontraction, where $\delta =1-\frac{1-\beta }{\mu }<0$, i.e. $U_{\mu }$
is $-\delta $-SQNE. Thus $U_{\mu }T_{\lambda }$ is $\kappa $-SQNE, where $%
\kappa =-(\gamma ^{-1}+\delta ^{-1})^{-1}$ \cite[Cor. 2.1.47]{Ceg12}, $%
\limfunc{Fix}(U_{\mu }T_{\lambda })=\limfunc{Fix}U_{\mu }\cap \limfunc{Fix}%
T_{\lambda }=\limfunc{Fix}U\cap \limfunc{Fix}T=F$ \cite[Th. 2.1.26(ii)]%
{Ceg12} and $U_{\mu }T_{\lambda }$ satisfies the demi closedness principle 
\cite[Cor. 5.6(i)]{CRZ18}. Proposition \ref{p-WC} yields now that any
sequence $\{x^{k}\}_{k=0}^{\infty }$ generated by iteration (\ref%
{e-iter-Mou10}), where $T$ is given by (\ref{e-LT}), converges weakly to an
element of $F$. The convergence also holds if $\mu _{k}$ is not constant. To
prove it, one should introduce a definition of a sequence of operators
satisfying the demi closedness principle and apply \cite[Cor. 5.5(i)]{CRZ18}
instead of \cite[Cor. 5.6(i)]{CRZ18}. Note that iteration (\ref{e-iter-Mou10}%
) employed, actually, SQNE operators $U_{\mu _{k}}$ and $T_{\lambda }$.
Natural questions arise at this point. Is it possible to allow that $%
T_{\lambda }$ and/or $U_{\mu _{k}}$ are demicontractions which are not SQNE?
Under what conditions on $\lambda ,\mu ,\nu _{k}>0$ any sequence $%
\{x^{k}\}_{k=0}^{\infty }$ generated by the iteration $x^{k+1}=(U_{\mu
}T_{\lambda })_{\nu _{k}}(x^{k})$ converges weakly to an element of $F$?
Theses questions will be answered in Sections \ref{s-3} and \ref{s-4}.
\end{example}

\section{Properties of strict pseudocontractions and demicontractions \label%
{s-3}}

In this section we give conditions under which convex combinations as well
as compositions of demicontractions (satisfying the demi closedness
principle) are again demicontractions (satisfying the demi closedness
principle).

\subsection{Convex combinations of strict pseudocontractions and
demicontractions \label{ss3.1}}

Let $T_{i}:\mathcal{H}\rightarrow \mathcal{H}$, $i\in I:=\{1,2,...,m\}$. The
following theorems extend \cite[Thms 2.1.50 and 2.2.35]{Ceg12}, where only
the case $\lambda _{i}\in (0,2)$, $i\in I$, was considered. An equivalent
formulation of the theorem below is presented in \cite[Prop. 2.4]{BDP22}.

\begin{theorem}
\label{t-conv-RFNE}Let $T_{i}$ be $\lambda _{i}$-RFNE where $\lambda _{i}\in
(0,+\infty )$, $i\in I$, $w_{i}>0,i\in I$, with $\sum_{i=1}^{m}w_{i}=1$ and $%
T:=\sum_{i=1}^{m}w_{i}T_{i}$. Then:

\begin{enumerate}
\item[$\mathrm{(i)}$] $T$ is $\lambda $-RFNE with 
\begin{equation}
\lambda =\sum_{i=1}^{m}w_{i}\lambda _{i}\text{;}  \label{e-cc-lam}
\end{equation}

\item[$\mathrm{(ii)}$] The relaxation parameter $\lambda $ satisfies
inequalities 
\begin{equation}
0<\min_{i\in I}\lambda _{i}\leq \lambda \leq \max_{i\in I}\lambda
_{i}<+\infty \text{.}  \label{e-cc-lam-mm}
\end{equation}
\end{enumerate}
\end{theorem}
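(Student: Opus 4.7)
The plan is to prove part (i) using the characterization of $\lambda$-RFNE given by Proposition \ref{p-RFNE}, namely that it suffices to verify
\[
\langle y-x,(T(x)-x)-(T(y)-y)\rangle \geq \tfrac{1}{\lambda}\|(T(x)-x)-(T(y)-y)\|^{2}
\]
for all $x,y\in\mathcal{H}$, with $\lambda=\sum_{i=1}^{m}w_{i}\lambda_{i}$. Part (ii) will then be immediate since $\lambda$ is a convex combination of the $\lambda_{i}\in(0,+\infty)$, so it automatically lies between $\min_{i}\lambda_{i}$ and $\max_{i}\lambda_{i}$.

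For part (i), the key linearization is that $\sum_{i=1}^{m}w_{i}=1$ forces $T-\operatorname{Id}=\sum_{i=1}^{m}w_{i}(T_{i}-\operatorname{Id})$. Writing $u_{i}:=(T_{i}(x)-x)-(T_{i}(y)-y)$ and $u:=(T(x)-x)-(T(y)-y)$, this gives $u=\sum_{i=1}^{m}w_{i}u_{i}$. Applying Proposition \ref{p-RFNE} to each $T_{i}$ yields $\langle y-x,u_{i}\rangle\geq\|u_{i}\|^{2}/\lambda_{i}$, and taking the $w_{i}$-weighted sum produces
\[
\langle y-x,u\rangle \;=\;\sum_{i=1}^{m}w_{i}\langle y-x,u_{i}\rangle\;\geq\;\sum_{i=1}^{m}\frac{w_{i}}{\lambda_{i}}\|u_{i}\|^{2}.
\]

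The main technical step, and the only nontrivial estimate, is to bound $\|u\|^{2}/\lambda$ from above by the right-hand side of the previous inequality. I would write $w_{i}=\sqrt{w_{i}/\lambda_{i}}\cdot\sqrt{w_{i}\lambda_{i}}$ and apply the triangle inequality followed by Cauchy--Schwarz in $\mathbb{R}^{m}$:
\[
\|u\|\;\leq\;\sum_{i=1}^{m}\sqrt{\tfrac{w_{i}}{\lambda_{i}}}\,\|u_{i}\|\cdot\sqrt{w_{i}\lambda_{i}}\;\leq\;\Bigl(\sum_{i=1}^{m}\tfrac{w_{i}}{\lambda_{i}}\|u_{i}\|^{2}\Bigr)^{1/2}\Bigl(\sum_{i=1}^{m}w_{i}\lambda_{i}\Bigr)^{1/2}.
\]
Squaring and dividing by $\lambda=\sum_{i}w_{i}\lambda_{i}$ gives $\|u\|^{2}/\lambda\leq\sum_{i}(w_{i}/\lambda_{i})\|u_{i}\|^{2}$, which combined with the previous display yields exactly the $\lambda$-RFNE inequality and completes part (i). Part (ii) is a one-line observation on convex combinations. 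The only real obstacle is recognizing the correct weighting in Cauchy--Schwarz so that the averaged parameter $\lambda$ appears with the right exponent; once that splitting is guessed, everything collapses.
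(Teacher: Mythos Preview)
Your proof is correct. The paper itself does not spell out the argument---it defers to \cite[Th.~2.2.35]{Ceg12} and notes that the proof there goes through for arbitrary $\lambda_i>0$---but the analogous detailed computation appears in the paper's proof of Theorem~\ref{t-conv-dc} for relaxed cutters, and your argument matches its structure: linearize the displacement via $T-\operatorname{Id}=\sum_i w_i(T_i-\operatorname{Id})$, apply the characterization (\ref{e-RFNE}) componentwise, and then control $\|u\|^2/\lambda$ by $\sum_i (w_i/\lambda_i)\|u_i\|^2$. The one stylistic difference is in that last estimate: the paper reparametrizes with weights $v_i=w_i\lambda_i/\lambda$ and invokes the convexity of $\|\cdot\|^2$ (Jensen), whereas you go through the triangle inequality followed by Cauchy--Schwarz in $\mathbb{R}^m$. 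Both routes produce exactly the same inequality, so this is a cosmetic variation rather than a genuinely different approach.
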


\begin{proof}
In \cite[Th. 2.2.35]{Ceg12} the case $\lambda _{i}\in (0,2)$, $i\in I$, was
considered. It follow from the proof of \cite[Th. 2.2.35]{Ceg12} that the
theorem is true for arbitrary $\lambda _{i}\in (0,+\infty )$, $i\in I$.
\end{proof}

\begin{corollary}
\label{c-conv-SPC}Let $T_{i}$ be an $\alpha _{i}$-SPC, where $\alpha _{i}\in
(-\infty ,1)$, $i\in I$, $w_{i}>0,i\in I$, with $\sum_{i=1}^{m}w_{i}=1$, and 
$T:=\sum_{i=1}^{m}w_{i}T_{i}$. Then:

\begin{enumerate}
\item[$\mathrm{(i)}$] $T$ is an $\alpha $-SPC with 
\begin{equation}
\alpha =1-\left( \sum_{i=1}^{m}\frac{w_{i}}{1-\alpha _{i}}\right) ^{-1}\text{%
;}  \label{e-cc-al}
\end{equation}

\item[$\mathrm{(ii)}$] The parameter $\alpha $ satisfies inequalities%
\begin{equation}
-\infty <\min_{i\in I}\alpha _{i}\leq \alpha \leq \max_{i\in I}\alpha _{i}<1%
\text{.}  \label{e-cc-al-mm}
\end{equation}
\end{enumerate}
\end{corollary}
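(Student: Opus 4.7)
The plan is to reduce the statement to Theorem \ref{t-conv-RFNE} via the SPC $\leftrightarrow$ RFNE correspondence from Proposition \ref{c-SPC-RFNE}, and then translate the resulting bounds by monotonicity.

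First I would observe that Proposition \ref{c-SPC-RFNE} gives a bijection between $\alpha$-SPCs ($\alpha \in (-\infty,1)$) and $\lambda$-RFNE operators ($\lambda \in (0,+\infty)$) via the correspondence
\begin{equation*}
\lambda = \frac{2}{1-\alpha}, \qquad \alpha = \frac{\lambda-2}{\lambda}.
\end{equation*}
Applying this to each $T_i$, the hypothesis that $T_i$ is an $\alpha_i$-SPC is equivalent to $T_i$ being $\lambda_i$-RFNE with $\lambda_i := 2/(1-\alpha_i) \in (0,+\infty)$. Then Theorem \ref{t-conv-RFNE}(i) applied to the convex combination $T = \sum_{i=1}^m w_i T_i$ gives that $T$ is $\lambda$-RFNE with
\begin{equation*}
\lambda = \sum_{i=1}^m w_i \lambda_i = 2 \sum_{i=1}^m \frac{w_i}{1-\alpha_i}.
\end{equation*}
A second application of Proposition \ref{c-SPC-RFNE} then translates this back: $T$ is an $\alpha$-SPC with
\begin{equation*}
\alpha = \frac{\lambda-2}{\lambda} = 1 - \frac{2}{\lambda} = 1 - \left( \sum_{i=1}^m \frac{w_i}{1-\alpha_i} \right)^{-1},
\end{equation*}
which is precisely (\ref{e-cc-al}). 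This settles (i).

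For part (ii), I would exploit the fact that the map $\alpha \mapsto \lambda(\alpha) := 2/(1-\alpha)$ is strictly increasing on $(-\infty,1)$, as is its inverse $\lambda \mapsto (\lambda-2)/\lambda$ on $(0,+\infty)$. Hence $\min_{i\in I}\lambda_i = 2/(1-\min_{i\in I}\alpha_i)$ and $\max_{i\in I}\lambda_i = 2/(1-\max_{i\in I}\alpha_i)$, and both lie in $(0,+\infty)$. The bounds (\ref{e-cc-lam-mm}) from Theorem \ref{t-conv-RFNE}(ii) read $\min_i \lambda_i \le \lambda \le \max_i \lambda_i$, and applying the increasing map $\lambda \mapsto 1 - 2/\lambda$ preserves these inequalities, yielding $\min_i \alpha_i \le \alpha \le \max_i \alpha_i$. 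Combined with $\min_i \alpha_i > -\infty$ and $\max_i \alpha_i < 1$ (which hold because $I$ is finite and each $\alpha_i \in (-\infty,1)$), this gives (\ref{e-cc-al-mm}).

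There is really no obstacle here — the corollary is a mechanical rewriting of Theorem \ref{t-conv-RFNE} under the SPC/RFNE reparametrization. The only item worth checking carefully is that the formula $\alpha = 1 - 2/\lambda$ with $\lambda = 2\sum_i w_i/(1-\alpha_i)$ simplifies cleanly to (\ref{e-cc-al}), and that the monotonicity argument respects the endpoints of the interval $(-\infty,1)$; both are routine.
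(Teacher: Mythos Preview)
Your proof is correct and follows essentially the same approach as the paper: reduce to Theorem~\ref{t-conv-RFNE} via the SPC/RFNE correspondence of Proposition~\ref{c-SPC-RFNE}, then translate back. The only cosmetic difference is in part~(ii): the paper verifies the inequalities by direct substitution of $\min_j\alpha_j$ and $\max_j\alpha_j$ into the formula~(\ref{e-cc-al}), whereas you invoke Theorem~\ref{t-conv-RFNE}(ii) and the monotonicity of $\lambda\mapsto 1-2/\lambda$; these are the same argument in slightly different dress.
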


\begin{proof}
By Proposition \ref{c-SPC-RFNE}, $T_{i}$ is $\lambda _{i}$-RFNE, where $%
\lambda _{i}=\frac{2}{1-\alpha _{i}}$, $i\in I$. Theorem \ref{t-conv-RFNE}
yields that $T$ is $\lambda $-RFNE, where $\lambda =\sum_{i=1}^{m}\frac{%
2w_{i}}{1-\alpha _{i}}$. Applying Proposition \ref{c-SPC-RFNE} again, we
obtain that $T$ is an $\alpha $-SPC with 
\begin{equation*}
\alpha =1-\frac{2}{\lambda }=1-2\left( \sum_{i=1}^{m}\frac{2w_{i}}{1-\alpha
_{i}}\right) ^{-1}=1-\left( \sum_{i=1}^{m}\frac{w_{i}}{1-\alpha _{i}}\right)
^{-1}\text{.}
\end{equation*}%
This proves part (i). Moreover 
\begin{eqnarray*}
-\infty  &<&\min_{j\in I}\alpha _{j}=1-\left( \sum_{i=1}^{m}\frac{w_{i}}{%
1-\min_{j\in I}\alpha _{j}}\right) ^{-1}\leq 1-\left( \sum_{i=1}^{m}\frac{%
w_{i}}{1-\alpha _{i}}\right) ^{-1} \\
&\leq &1-\left( \sum_{i=1}^{m}\frac{w_{i}}{1-\max_{j\in I}\alpha _{j}}%
\right) ^{-1}=\max_{j\in I}\alpha _{j}<1
\end{eqnarray*}%
which proves part (ii).
\end{proof}

\begin{theorem}
\label{t-conv-dc}Let $T_{i}:\mathcal{H}\rightarrow \mathcal{H}$ be a $%
\lambda _{i}$-relaxed cutter, where $\lambda _{i}\in (0,+\infty )$, $i\in I$%
, $\bigcap_{i=1}^{m}\limfunc{Fix}T_{i}\neq \emptyset $, $w_{i}>0,i\in I$,
with $\sum_{i=1}^{m}w_{i}=1$, and $T=\sum_{i=1}^{m}w_{i}T_{i}$. Then:

\begin{enumerate}
\item[$\mathrm{(i)}$] The operator $T$ is a $\lambda $-relaxed cutter with $%
\lambda $ given by $\mathrm{(\ref{e-cc-lam})}$;

\item[$\mathrm{(ii)}$] The relaxation parameter $\lambda $ satisfies
inequalities $\mathrm{(\ref{e-cc-lam-mm})}$;

\item[$\mathrm{(iii)}$] $\limfunc{Fix}T=\bigcap_{i=1}^{m}\limfunc{Fix}T_{i}$;

\item[$\mathrm{(iv)}$] Suppose that $T_{i}$, $i\in I$, satisfy the demi
closedness principle. Then $T$ also satisfies the demi closedness principle.
\end{enumerate}
\end{theorem}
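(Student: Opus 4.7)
The plan is to reduce the theorem to the already-established case of (un-relaxed) cutters by factoring out the relaxation parameters. For each $i\in I$, set $U_{i}:=(T_{i})_{1/\lambda _{i}}$, so that $U_{i}$ is a cutter with $\limfunc{Fix}U_{i}=\limfunc{Fix}T_{i}$ (the fixed point set is unchanged since $\lambda _{i}>0$) and $T_{i}-\limfunc{Id}=\lambda _{i}(U_{i}-\limfunc{Id})$. Put $\lambda :=\sum_{i=1}^{m}w_{i}\lambda _{i}$ and $\tilde{w}_{i}:=w_{i}\lambda _{i}/\lambda $; then $\tilde{w}_{i}>0$ and $\sum_{i=1}^{m}\tilde{w}_{i}=1$. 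A direct computation gives
\[
T-\limfunc{Id}=\sum_{i=1}^{m}w_{i}(T_{i}-\limfunc{Id})=\sum_{i=1}^{m}w_{i}\lambda _{i}(U_{i}-\limfunc{Id})=\lambda (\tilde{U}-\limfunc{Id}),
\]
where $\tilde{U}:=\sum_{i=1}^{m}\tilde{w}_{i}U_{i}$. Thus $T=(\tilde{U})_{\lambda }$ is a $\lambda $-relaxation of the convex combination $\tilde{U}$ of cutters.

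Next I would invoke the classical convex-combination result for cutters that share a common fixed point (the $\lambda _{i}=1$ special case of the present theorem, e.g.\ \cite[Th.\ 2.1.26 and Cor.\ 2.1.48]{Ceg12}): $\tilde{U}$ is a cutter and $\limfunc{Fix}\tilde{U}=\bigcap _{i=1}^{m}\limfunc{Fix}U_{i}=\bigcap _{i=1}^{m}\limfunc{Fix}T_{i}$. Since $T=(\tilde{U})_{\lambda }$, this immediately yields part (i) and, because positive relaxation preserves fixed points, part (iii). Part (ii) is the observation that $\lambda $ is a convex combination of the positive numbers $\lambda _{i}$, so $\min _{i}\lambda _{i}\leq \lambda \leq \max _{i}\lambda _{i}$.

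For part (iv), the identity $T_{i}-\limfunc{Id}=\lambda _{i}(U_{i}-\limfunc{Id})$ with $\lambda _{i}>0$ shows that $\Vert T_{i}(x^{k})-x^{k}\Vert \rightarrow 0$ if and only if $\Vert U_{i}(x^{k})-x^{k}\Vert \rightarrow 0$; combined with $\limfunc{Fix}U_{i}=\limfunc{Fix}T_{i}$ this implies that each $U_{i}$ inherits the demi closedness principle from $T_{i}$. Every cutter is $1$-SQNE, so by the result on convex combinations of SQNE operators sharing a common fixed point and satisfying the demi closedness principle (\cite[Thm 4.1]{Ceg15}), $\tilde{U}$ itself satisfies the demi closedness principle. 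The same scaling argument applied once more, now through $T-\limfunc{Id}=\lambda (\tilde{U}-\limfunc{Id})$ together with $\limfunc{Fix}T=\limfunc{Fix}\tilde{U}$, transfers the principle from $\tilde{U}$ back to $T$.

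The main obstacle I anticipate is part (iv): one must track the demi closedness principle through two reductions, first from $T_{i}$ to the cutter $U_{i}$ and then through the convex combination $\tilde{U}$. The first step is routine from the scaling, but the second is nontrivial and rests entirely on the cited closedness-under-convex-combinations result for weakly regular SQNE operators. Once that lemma is in hand, the entire theorem collapses to the single factorization $T=(\tilde{U})_{\lambda }$.
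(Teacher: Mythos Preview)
Your proposal is correct and follows essentially the same route as the paper: both introduce the cutters $U_{i}=(T_{i})_{1/\lambda_{i}}$, reweight by $\tilde w_{i}=w_{i}\lambda_{i}/\lambda$, and observe that $T=(\tilde U)_{\lambda}$ with $\tilde U=\sum_{i}\tilde w_{i}U_{i}$; part (iv) is handled identically via \cite[Th.~4.1]{Ceg15}. The only cosmetic difference is that the paper writes out the cutter inequality for $\tilde U$ explicitly (via the convexity of $\Vert\cdot\Vert^{2}$) and reads off $\limfunc{Fix}T=\bigcap_{i}\limfunc{Fix}T_{i}$ from the equality case, whereas you cite the ready-made convex-combination result for cutters.
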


\begin{proof}
(cf. \cite[Th. 2.1.50]{Ceg12}) Let $x\in \mathcal{H}$ and all $z\in
\bigcap_{i\in I}\limfunc{Fix}T_{i}$. Denote $v_{i}:=\frac{w_{i}\lambda _{i}}{%
\lambda }$ and define $U_{i}:=(T_{i})_{\lambda _{i}^{-1}}$ , $i\in I$. Then $%
U_{i}$ is a cutter, $T_{i}x-x=\lambda _{i}(U_{i}-x)$, $v_{i}>0$, $i\in I$,
and $\sum_{i=1}^{m}v_{i}=1$. By the convexity of the function $\Vert \cdot
\Vert ^{2}$ we obtain 
\begin{eqnarray}
\langle z-x,T(x)-x\rangle &=&\sum_{i=1}^{m}w_{i}\langle z-x,T_{i}(x)-x\rangle
\label{e-cc1} \\
&\geq &\sum_{i=1}^{m}\frac{w_{i}}{\lambda _{i}}\Vert T_{i}(x)-x\Vert
^{2}=\lambda \sum_{i=1}^{m}v_{i}\Vert U_{i}(x)-x\Vert ^{2}  \label{e-cc2} \\
&\geq &\lambda \left\Vert \sum_{i=1}^{m}v_{i}(U_{i}(x)-x)\right\Vert ^{2}=%
\frac{1}{\lambda }\left\Vert \sum_{i=1}^{m}w_{i}\lambda
_{i}(U_{i}(x)-x)\right\Vert ^{2}  \label{e-cc3} \\
&=&\frac{1}{\lambda }\left\Vert \sum_{i=1}^{m}w_{i}(T_{i}(x)-x)\right\Vert
^{2}=\frac{1}{\lambda }\Vert T(x)-x\Vert ^{2}\text{.}  \label{e-cc4}
\end{eqnarray}

(ii) is obvious.

(i) and (iii) The inclusion $\bigcap_{i=1}^{m}\limfunc{Fix}T_{i}\subseteq 
\limfunc{Fix}T$ is clear. If $\limfunc{Fix}T=\emptyset $ then the converse
inclusion is obvious. Suppose that $\limfunc{Fix}T\neq \emptyset $. For $%
x\in \limfunc{Fix}T$ there hold equalities in (\ref{e-cc1})-(\ref{e-cc4})
which in view of $\frac{w_{i}}{\lambda _{i}}>0$, $i\in I$, means that $x\in 
\limfunc{Fix}T_{i}$, $i\in I$. This yields (i) and (iii).

(iv) Define $U:=\sum_{i=1}^{m}v_{i}U_{i}$. By (i), $U$ is a cutter. We have 
\begin{equation*}
Tx-x=\sum_{i=1}^{m}w_{i}(T_{i}x-x)=\lambda \sum_{i=1}^{m}\frac{w_{i}\lambda
_{i}}{\lambda }(U_{i}x-x)=\lambda \sum_{i=1}^{m}v_{i}(U_{i}x-x)=\lambda
(Ux-x)\text{.}
\end{equation*}%
Because $U_{i}$ satisfies the demi closedness principle as a relaxation of $%
T_{i}$, $i\in I$, the operator $U$ satisfies the demi closedness principle 
\cite[Th. 4.1]{Ceg15}. Consequently, $T$ also satisfies the demi closedness
principle as a relaxation of $U$.
\end{proof}

\begin{corollary}
\label{c-conv-dc}Let $T_{i}:\mathcal{H}\rightarrow \mathcal{H}$ be an $%
\alpha _{i}$-demicontraction, where $\alpha _{i}\in (-\infty ,1)$, $i\in I$, 
$\bigcap_{i=1}^{m}\limfunc{Fix}T_{i}\neq \emptyset $, $w_{i}>0,i\in I$, with 
$\sum_{i=1}^{m}w_{i}=1$, and $T:=\sum_{i=1}^{m}w_{i}T_{i}$. Then:

\begin{enumerate}
\item[$\mathrm{(i)}$] $T$ is an $\alpha $-demicontraction with $\alpha $
given by $\mathrm{(\ref{e-cc-al})}$;

\item[$\mathrm{(ii)}$] The parameter $\alpha $ satisfies inequalities $%
\mathrm{(\ref{e-cc-al-mm})}$;

\item[$\mathrm{(iii)}$] $\limfunc{Fix}T=\bigcap_{i=1}^{m}\limfunc{Fix}T_{i}$;

\item[$\mathrm{(iv)}$] Suppose that $T_{i}$, $i\in I$, satisfy the demi
closedness principle. Then $T$ also satisfies the demi closedness principle.
\end{enumerate}
\end{corollary}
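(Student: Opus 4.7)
The plan is to reduce everything to Theorem \ref{t-conv-dc} via Corollary \ref{c-dc-c}, which sets up a bijection between $\alpha$-demicontractions and $\lambda$-relaxed cutters through $\lambda=2/(1-\alpha)$. This mirrors exactly the reduction used to derive Corollary \ref{c-conv-SPC} from Theorem \ref{t-conv-RFNE}, so I expect no genuinely new difficulty, only careful bookkeeping of the correspondence.

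First I would convert each $T_{i}$: by Corollary \ref{c-dc-c}, each $T_{i}$ is a $\lambda_{i}$-relaxed cutter with $\lambda_{i}=2/(1-\alpha_{i})\in(0,+\infty)$. The hypothesis $\bigcap_{i=1}^{m}\limfunc{Fix}T_{i}\neq\emptyset$ is preserved verbatim, and the operator $T=\sum_{i=1}^{m}w_{i}T_{i}$ is literally unchanged. Thus Theorem \ref{t-conv-dc} applies and yields three things at once: $T$ is a $\lambda$-relaxed cutter with $\lambda=\sum_{i=1}^{m}w_{i}\lambda_{i}$; $\limfunc{Fix}T=\bigcap_{i=1}^{m}\limfunc{Fix}T_{i}$, which is exactly part (iii) of the corollary; and, when each $T_{i}$ satisfies the demi closedness principle, so does $T$, which is exactly part (iv). The latter two transfer without comment because the demi closedness principle and the fixed point set are intrinsic to $T$ and do not depend on whether we label it as a demicontraction or as a relaxed cutter.

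For part (i), I would apply Corollary \ref{c-dc-c} in the reverse direction: being a $\lambda$-relaxed cutter with a fixed point, $T$ is an $\alpha$-demicontraction with $\alpha=(\lambda-2)/\lambda=1-2/\lambda$. Substituting $\lambda=2\sum_{i=1}^{m}w_{i}/(1-\alpha_{i})$ gives $\alpha=1-\left(\sum_{i=1}^{m}w_{i}/(1-\alpha_{i})\right)^{-1}$, which is formula $(\ref{e-cc-al})$. For part (ii), I would transcribe the monotonicity estimate from the proof of Corollary \ref{c-conv-SPC}: the map $\alpha_{i}\mapsto w_{i}/(1-\alpha_{i})$ is strictly increasing on $(-\infty,1)$, so replacing every $\alpha_{i}$ by $\min_{j\in I}\alpha_{j}$ (respectively $\max_{j\in I}\alpha_{j}$) and using $\sum_{i=1}^{m}w_{i}=1$ collapses the expression for $\alpha$ to $\min_{j\in I}\alpha_{j}$ (respectively $\max_{j\in I}\alpha_{j}$), which is $(\ref{e-cc-al-mm})$.

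The main, and essentially only, obstacle is ensuring that the conclusions about $\limfunc{Fix}T$ and the demi closedness principle, not just the parameter formula, really do transport across the bijection in Corollary \ref{c-dc-c}. This is immediate once one notes that the correspondence $\lambda=2/(1-\alpha)$ identifies the \emph{same} operator under two different labels, so any property of the operator (as opposed to of its classification) is preserved automatically.
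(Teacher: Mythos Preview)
Your proposal is correct and is precisely the intended argument: the paper states this result as an immediate corollary of Theorem~\ref{t-conv-dc} without giving a separate proof, and the reduction via Corollary~\ref{c-dc-c} that you spell out (mirroring the derivation of Corollary~\ref{c-conv-SPC} from Theorem~\ref{t-conv-RFNE}) is exactly what is implicit.
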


\subsection{Composition of strict pseudocontractions and demicontractions 
\label{ss-3.2}}

Before we formulate the main result of this paper, we prove an auxiliary
lemma. Let $\lambda ,\mu >0$. Consider the following equation 
\begin{equation}
\left( 1-\frac{2}{\nu }\right) ^{2}=4(\frac{1}{\lambda }-\frac{1}{\nu })(%
\frac{1}{\mu }-\frac{1}{\nu })\text{.}  \label{e-gam-eq}
\end{equation}%
If $\lambda =2$ and $\mu =2$ then arbitrary $\nu \neq 0$ is a solution of (%
\ref{e-gam-eq}). Otherwise, if $\lambda \mu =4$ then (\ref{e-gam-eq}) has no
solution. One can easily check that in other cases the unique solution of
equation (\ref{e-gam-eq}) is 
\begin{equation}
\nu ^{\ast }=\nu (\lambda ,\mu )=\frac{4(\lambda +\mu -\lambda \mu )}{%
4-\lambda \mu }\text{.}  \label{e-gam-sol}
\end{equation}

In particular, if $\lambda =2$ and $\mu \neq 2$ or if $\mu =2$ and $\lambda
\neq 2$ then $\nu ^{\ast }=2$ is the unique solution of (\ref{e-gam-eq}).
Level lines of the function $\nu (\lambda ,\mu )$ are presented on Figure 1.
Note that $\nu (\lambda ,\mu )$ is well defined if and only if $\lambda \mu
\neq 4$. Moreover, one can be easily proved that if $\lambda \mu <4$ then $%
\lambda +\mu >\lambda \mu $ and $\nu (\lambda ,\mu )>0$ (see Appendix). If $%
4<\lambda \mu <\lambda +\mu $ then $\nu (\lambda ,\mu )<0$. If $\lambda +\mu
<\lambda \mu $ then $\lambda \mu >4$ and $\nu (\lambda ,\mu )>0$. However,
in the latter case $\nu \leq \min \{\lambda ,\mu \}$, where the equality
holds if and only if $\min \{\lambda ,\mu \}=2$.\vspace{-0.5cm}

\begin{equation*}
\FRAME{itbpFU}{3.4694in}{3.3439in}{0in}{\Qcb{Figure 1. Solution $\protect\nu 
$ of equation (\protect\ref{e-gam-eq}) depending on $\protect\lambda $ and $%
\protect\mu $}}{}{Figure}{\special{language "Scientific Word";type
"GRAPHIC";maintain-aspect-ratio TRUE;display "USEDEF";valid_file "T";width
3.4694in;height 3.3439in;depth 0in;original-width 12.1839in;original-height
11.7375in;cropleft "0";croptop "1";cropright "1";cropbottom "0";tempfilename
'RXQPS701.wmf';tempfile-properties "XPR";}}
\end{equation*}%
\vspace{-0.5cm}

\bigskip

The following auxiliary lemma shows the properties of the solution of (\ref%
{e-gam-eq}) for $\lambda \mu <4$ (cf. \cite[Lem. 2.1.45]{Ceg12}, where the
case $\lambda ,\mu <2$ was considered).

\begin{lemma}
\label{l-ni}Let $\lambda ,\mu >0$ be such that $\lambda \mu <4$. Then:

\begin{enumerate}
\item[$\mathrm{(i)}$] The unique solution $\nu ^{\ast }=\nu (\lambda ,\mu )$
of equation $\mathrm{(\ref{e-gam-eq})}$ satisfies the following inequalities%
\begin{equation}
0<\min \{\lambda ,\mu \}<\frac{4\min \{\lambda ,\mu \}}{\min \{\lambda ,\mu
\}+2}\leq \nu ^{\ast }  \label{e-gam-ineq1}
\end{equation}%
and 
\begin{equation}
\nu ^{\ast }\geq \max \{\lambda ,\mu \}\text{;}  \label{e-gam-ineq2}
\end{equation}

\item[$\mathrm{(ii)}$] If, additionally, $\lambda ,\mu <2$ then%
\begin{equation}
\nu ^{\ast }\leq \frac{4\max \{\lambda ,\mu \}}{\max \{\lambda ,\mu \}+2}<2%
\text{;}  \label{e-gam-ineq3}
\end{equation}
\end{enumerate}
\end{lemma}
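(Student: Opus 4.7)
The plan is to normalize the situation by assuming without loss of generality that $\lambda \le \mu$, so that $\min\{\lambda,\mu\}=\lambda$ and $\max\{\lambda,\mu\}=\mu$. A useful preliminary observation is that the hypothesis $\lambda\mu<4$ forces $\lambda<2$: if both $\lambda$ and $\mu$ were $\ge 2$, their product would be $\ge 4$. Hence $4-\lambda\mu>0$ throughout, so we are safe to clear the denominator in $\nu^{*}=\frac{4(\lambda+\mu-\lambda\mu)}{4-\lambda\mu}$ without reversing inequalities.

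For part (i), I would prove the three nontrivial inequalities separately. The middle inequality $\lambda<\frac{4\lambda}{\lambda+2}$ rearranges to $\lambda^{2}+2\lambda<4\lambda$, i.e.\ $\lambda<2$, which we just verified. The rightmost inequality $\frac{4\lambda}{\lambda+2}\le\nu^{*}$ reduces, after clearing the positive denominators $4-\lambda\mu$ and $\lambda+2$ and simplifying, to
\begin{equation*}
(\lambda-2)(\lambda-\mu)\ge 0,
\end{equation*}
which holds because both factors are nonpositive (using $\lambda<2$ and $\lambda\le\mu$). For the bound $\nu^{*}\ge\mu$ in \eqref{e-gam-ineq2} I would compute $\nu^{*}-\mu$ directly: clearing the denominator yields $4(\lambda+\mu-\lambda\mu)-\mu(4-\lambda\mu)=\lambda(\mu-2)^{2}$, hence
\begin{equation*}
\nu^{*}-\mu=\frac{\lambda(\mu-2)^{2}}{4-\lambda\mu}\ge 0,
\end{equation*}
which gives the inequality (and shows equality iff $\mu=2$).

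For part (ii) the hypothesis $\mu<2$ immediately gives $\frac{4\mu}{\mu+2}<2$, since this is equivalent to $\mu<2$. For the remaining bound $\nu^{*}\le\frac{4\mu}{\mu+2}$, the same style of manipulation (clear $4-\lambda\mu>0$ and $\mu+2>0$ and collect terms) reduces the desired inequality to
\begin{equation*}
(\mu-2)(\mu-\lambda)\le 0,
\end{equation*}
which holds because $\mu-2<0$ and $\mu-\lambda\ge 0$.

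I do not expect a real obstacle: the whole lemma is an exercise in sign-tracking once the two factorizations $\lambda(\mu-2)^{2}$ and $(\lambda-2)(\lambda-\mu)$ (resp.\ $(\mu-2)(\mu-\lambda)$) are spotted. The one point that genuinely needs attention is the very first step, namely deducing $\lambda<2$ from $\lambda\mu<4$; without it the factor $\lambda-2$ could have the wrong sign and the argument for $\frac{4\lambda}{\lambda+2}\le\nu^{*}$ would break. Everything else is forced by the WLOG ordering and the positivity of $4-\lambda\mu$.
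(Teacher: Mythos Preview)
Your proof is correct. The paper takes a different tactic: rather than clearing denominators and factoring, it observes that
$\frac{\partial \nu}{\partial \lambda}=\dfrac{4(\mu-2)^{2}}{(4-\lambda\mu)^{2}}\ge 0$ and
$\frac{\partial \nu}{\partial \mu}=\dfrac{4(\lambda-2)^{2}}{(4-\lambda\mu)^{2}}\ge 0$,
so $\nu(\cdot,\cdot)$ is nondecreasing in each argument on the region $\{\lambda\mu<4\}$, and then reads off the bounds by comparing with the diagonal values $\nu(t,t)=\frac{4t}{t+2}$ (for instance $\nu(\lambda,\lambda)\le\nu(\lambda,\mu)\le\nu(\mu,\mu)$ when $\lambda\le\mu<2$). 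The two arguments are cousins---your identity $\nu^{*}-\mu=\dfrac{\lambda(\mu-2)^{2}}{4-\lambda\mu}$ is precisely what one obtains by integrating the paper's $\partial\nu/\partial\lambda$ from $0$ to $\lambda$ and using $\nu(0,\mu)=\mu$---but yours is calculus-free and makes the bound $\nu^{*}\ge\max\{\lambda,\mu\}$ in \eqref{e-gam-ineq2} fully explicit, while the paper's monotonicity viewpoint explains more conceptually why the endpoints $\frac{4\min\{\lambda,\mu\}}{\min\{\lambda,\mu\}+2}$ and $\frac{4\max\{\lambda,\mu\}}{\max\{\lambda,\mu\}+2}$ appear.
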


\begin{proof}
Note that $\frac{\partial \nu }{\partial \lambda }(\lambda ,\mu )=4\frac{%
(\mu -2)^{2}}{(4-\lambda \mu )^{2}}\geq 0$. Similarly, $\frac{\partial \nu }{%
\partial \mu }(\lambda ,\mu )=4\frac{(\lambda -2)^{2}}{(4-\lambda \mu )^{2}}%
\geq 0$. Suppose that $\lambda \leq \mu $. Note that $\lambda <2$ in this
case, because $\lambda \mu <4$. Thus, 
\begin{equation}
\lambda <\frac{4\lambda }{2+\lambda }=\nu (\lambda ,\lambda )\leq \nu
(\lambda ,\mu )\text{.}  \label{e-lam4}
\end{equation}%
Moreover, 
\begin{equation*}
\nu (\lambda ,\mu )\leq \nu (\mu ,\mu )=\frac{4\mu }{2+\mu }<2
\end{equation*}%
if $\lambda \leq \mu <2$. Now suppose that $\mu \leq \lambda $. In a similar
way as above one can prove that 
\begin{equation}
\mu <\frac{4\mu }{2+\mu }=\nu (\mu ,\mu )\leq \nu (\lambda ,\mu )\text{.}
\label{e-mi4}
\end{equation}%
Moreover, 
\begin{equation*}
\nu (\lambda ,\mu )\leq \nu (\lambda ,\lambda )=\frac{4\lambda }{2+\lambda }%
<2
\end{equation*}%
if $\mu \leq \lambda <2$. The considerations made above prove all of the
inequalities in (\ref{e-gam-ineq1})-(\ref{e-gam-ineq3}).
\end{proof}

\bigskip

Let $\lambda ,\mu >0$ be such that $\lambda \mu <4$. The theorem below shows
that composition of $\lambda $-RFNE and $\mu $-RFNE operators is $\nu
(\lambda ,\nu )$-RFNE. This result extends a well known result of Ogura and
Yamada \cite[Theorem 3(b)]{OY02} (cf. \cite[Theorem 2.2.37]{Ceg12}), where
the case $\lambda ,\mu \in (0,2)$ was considered. Moreover, we show that the
constant $\nu ^{\ast }:=\nu (\lambda ,\mu )$ is optimal. An equivalent
formulation of the first part of (ii) of the theorem below was presented in 
\cite[Prop. 2.5]{BDP22} in terms of conically averaged operators. However,
we give another proof of this part which we apply in Subsection \ref{ss-3.3}
in order to introduce extrapolations of compositions of RFNE operators and
extrapolations of compositions of relaxed cutters. As far as we know, the
second part of item (ii) in the theorem below as well as items (i) and
(iii)-(v) are new. For $x,y\in \mathcal{H}$ denote $a_{1}:=T(x)-x$, $%
a_{2}:=T(y)-y$, $b_{1}:=UT(x)-T(x)$ and $b_{2}:=UT(y)-T(y)$.

\begin{theorem}
\label{t-comp-RFNE}Let $\lambda ,\mu >0$, $T:\mathcal{H}\rightarrow \mathcal{%
H}$ be $\lambda $-RFNE, $U:\mathcal{H}\rightarrow \mathcal{H}$ be $\mu $%
-RFNE. Then

\begin{enumerate}
\item[$\mathrm{(i)}$] For arbitrary $x,y\in \mathcal{H}$ and for arbitrary $%
\nu \in 
\mathbb{R}
\setminus \{0\}$ it holds that 
\begin{eqnarray}
&&\langle y-x,(UT(x)-x)-(UT(y)-y)\rangle -\frac{1}{\nu }\Vert
(UT(x)-x)-(UT(y)-y)\Vert ^{2}  \notag \\
&\geq &(\frac{1}{\lambda }-\frac{1}{\nu })\Vert a_{1}-a_{2}\Vert ^{2}+(\frac{%
1}{\mu }-\frac{1}{\nu })\Vert b_{1}-b_{2}\Vert ^{2}+(1-\frac{2}{\nu }%
)\langle a_{1}-a_{2},b_{1}-b_{2}\rangle \text{.}  \label{e-comp-RFNE1}
\end{eqnarray}

\item[$\mathrm{(ii)}$] If $\lambda \mu <4$ then $UT$ is $\nu ^{\ast }$-RFNE,
where $\nu ^{\ast }:=\nu (\lambda ,\mu )$ is given by $\mathrm{(\ref%
{e-gam-sol})}$. Consequently, $UT$ satisfies the demi closedness principle.
Moreover, if $\dim \mathcal{H}\geq 2$ then the constant $\nu ^{\ast }$ is
optimal, i.e. for arbitrary $\rho \in (0,\nu ^{\ast })$ there are a $\lambda 
$-RFNE operator $T$ and a $\mu $-RFNE operator $U$ such that $UT$ is not $%
\rho $-RFNE.

\item[$\mathrm{(iii)}$] If $\lambda \mu <\lambda +\mu $ and $\limfunc{Fix}%
T\cap \limfunc{Fix}U\neq \emptyset $ then $\limfunc{Fix}UT=\limfunc{Fix}%
T\cap \limfunc{Fix}U$.

\item[$\mathrm{(iv)}$] Suppose that $\lambda \mu >4$. If $\lambda \mu \leq
\lambda +\mu $ and $\dim \mathcal{H}\geq 2$ or if $\lambda \mu >\lambda +\mu 
$ then there are a $\lambda $-RFNE operator $T$ and a $\mu $-RFNE operator $%
U $ such that $UT$ is not RFNE.

\item[$\mathrm{(v)}$] If $\lambda \mu \geq \lambda +\mu $ then there are a $%
\lambda $-RFNE operator $T$ and a $\mu $-RFNE operator $U$ such that $%
\limfunc{Fix}UT\neq \limfunc{Fix}T\cap \limfunc{Fix}U\neq \emptyset $.
\end{enumerate}
\end{theorem}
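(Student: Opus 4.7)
\textbf{Part (i)} is an algebraic combination of the two RFNE inequalities provided by Proposition \ref{p-RFNE}. For $T$ I have $\langle y-x, a_{1}-a_{2}\rangle \geq \tfrac{1}{\lambda }\|a_{1}-a_{2}\|^{2}$, while applying the same characterization to $U$ at the points $T(x),T(y)$, and rewriting $T(y)-T(x)=(y-x)-(a_{1}-a_{2})$, yields $\langle y-x, b_{1}-b_{2}\rangle \geq \tfrac{1}{\mu }\|b_{1}-b_{2}\|^{2} + \langle a_{1}-a_{2}, b_{1}-b_{2}\rangle$. I add the two, observe that $(UT(x)-x)-(UT(y)-y)=(a_{1}-a_{2})+(b_{1}-b_{2})$, and subtract $\tfrac{1}{\nu }\|(a_{1}-a_{2})+(b_{1}-b_{2})\|^{2}$ expanded via the binomial identity; collecting terms produces exactly (\ref{e-comp-RFNE1}).

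For \textbf{part (ii)}, I read the right-hand side of (\ref{e-comp-RFNE1}) as a quadratic form $Q(u,v)=A\|u\|^{2}+B\|v\|^{2}+C\langle u,v\rangle $ with $u=a_{1}-a_{2}$, $v=b_{1}-b_{2}$, $A=\tfrac{1}{\lambda }-\tfrac{1}{\nu }$, $B=\tfrac{1}{\mu }-\tfrac{1}{\nu }$ and $C=1-\tfrac{2}{\nu }$. It is nonnegative on $\mathcal{H}\times \mathcal{H}$ iff $A,B\geq 0$ and $C^{2}\leq 4AB$. At $\nu =\nu ^{\ast }$ equation (\ref{e-gam-eq}) gives equality $C^{2}=4AB$, and Lemma \ref{l-ni}(i) gives $\nu ^{\ast }\geq \max\{\lambda ,\mu \}$, so $A,B\geq 0$. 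Proposition \ref{p-RFNE} then yields that $UT$ is $\nu ^{\ast }$-RFNE, and as a relaxation of an FNE (hence NE) operator it inherits the demi closedness principle. For the optimality claim, in a 2D subspace I would take $T$ and $U$ as $\lambda $- and $\mu $-relaxations of orthogonal projections onto two lines through $0$; the angle between the lines is a free parameter controlling the direction of the pair $(u,v)$ at a chosen test point $(x,0)$. Tuning the angle so that $(u,v)$ sits in the null direction of $Q$ at $\nu ^{\ast }$ turns (\ref{e-RFNE}) for $UT$ into an equality with constant $\nu ^{\ast }$, so any $\rho <\nu ^{\ast }$ strictly violates it.

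For \textbf{part (iii)}, take $z\in \limfunc{Fix}(UT)$ and any $p\in \limfunc{Fix}T\cap \limfunc{Fix}U$. The $\lambda $-RFNE inequality for $T$ at $(z,p)$ gives $\langle p-z,T(z)-z\rangle \geq \tfrac{1}{\lambda }\|T(z)-z\|^{2}$, while the $\mu $-RFNE inequality for $U$ at $(T(z),p)$, using $UT(z)-T(z)=z-T(z)$, rearranges to $\langle p-z,T(z)-z\rangle \leq (1-\tfrac{1}{\mu })\|T(z)-z\|^{2}$. Subtracting yields $(\tfrac{1}{\lambda }+\tfrac{1}{\mu }-1)\|T(z)-z\|^{2}\leq 0$, and the hypothesis $\lambda \mu <\lambda +\mu $, equivalent to $\tfrac{1}{\lambda }+\tfrac{1}{\mu }>1$, forces $T(z)=z$, hence $U(z)=UT(z)=z$. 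The reverse inclusion is immediate.

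\textbf{Parts (iv) and (v)} are counterexample constructions and constitute the main technical obstacle. For (iv) with $\lambda \mu >4$, the PSD analysis of Part (ii) excludes any positive $\nu $: either $\nu ^{\ast }\leq 0$ so $C^{2}\leq 4AB$ fails for every $\nu >0$, or $\nu ^{\ast }\leq \min\{\lambda ,\mu \}$ so $A,B\geq 0$ fails on the admissible range. I would realise this failure with the same 2D template of relaxed projections onto two lines, tuning the angle so that the pair $(u,v)$ produced at some test point lands in a direction where no $\nu >0$ makes the composite RFNE. For (v) with $\lambda \mu \geq \lambda +\mu $, the chain in Part (iii) no longer closes, so I construct two relaxed projections onto lines meeting at a common fixed point and exhibit $z$ with $T(z)\neq z$ yet $UT(z)=z$; concretely, $\mu $ is chosen so that $U$ sends $T(z)$ across the second line back onto $z$. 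In each case the hard part is plane-geometric tuning of angles and relaxation parameters to hit the extremal direction identified algebraically, rather than any new conceptual ingredient.
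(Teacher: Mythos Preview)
Your arguments for (i), the positive half of (ii), and (iii) are correct and essentially match the paper's. Part (iii) is in fact cleaner than the paper's route: the paper splits off the case $\max\{\lambda,\mu\}<2$ by citing SQNE results and then, for the remaining case, argues via the perfect-square lower bound and a contradiction forcing $\lambda=\mu\geq 2$; your two-line chain $\tfrac{1}{\lambda}\|T(z)-z\|^{2}\leq \langle p-z,T(z)-z\rangle \leq (1-\tfrac{1}{\mu})\|T(z)-z\|^{2}$ handles all cases uniformly.

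The counterexample sketches, however, have genuine gaps. For the optimality in (ii) and for (iv) the paper does not merely tune an angle between two lines through the origin: it takes $H=\{x_{2}=0\}$, $H_{k}=\{x_{1}-kx_{2}=k\}$, sets $T=(P_{H})_{\lambda}$, $U_{k}=(P_{H_{k}})_{\mu}$, computes $f_{k}(\xi,\rho)=\rho\langle z_{k}-x,U_{k}T(x)-x\rangle-\|U_{k}T(x)-x\|^{2}$ for $x=(0,\xi)$ explicitly, and passes to the limit $k\to\infty$ to obtain a quadratic in $\xi$ whose minimum is then analysed. Your heuristic of forcing $(u,v)$ into the null direction of $Q$ is the right intuition, but with two lines through the origin the vectors $u=a_{1}-a_{2}$ and $v=b_{1}-b_{2}$ are orthogonal to the respective lines, so the angle between them is pinned by the configuration and you cannot freely place the pair in the null direction; something like the paper's limiting, non-concurrent family is needed.

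For (v) your proposal fails outright. With two \emph{distinct} lines $L_{1},L_{2}$ meeting at $p$ and $T=(P_{L_{1}})_{\lambda}$, $U=(P_{L_{2}})_{\mu}$, any $z\in\limfunc{Fix}(UT)$ satisfies $z-T(z)=UT(z)-T(z)$; the left side is orthogonal to $L_{1}$ and the right to $L_{2}$, which in the plane forces $T(z)=z$ and hence $z=p$. Thus $\limfunc{Fix}(UT)=\{p\}=\limfunc{Fix}T\cap\limfunc{Fix}U$ and no counterexample arises. The paper's construction is different in a decisive way: it uses the \emph{same} hyperplane $H$ for both operators, taking $T=(P_{H})_{\lambda}$ and $U=(P_{H})_{\sigma\mu}$ with $\sigma=\lambda/(\mu(\lambda-1))\leq 1$, so that $U$ is still $\mu$-RFNE while $\lambda+\sigma\mu-\lambda\sigma\mu=0$ makes $UT=\limfunc{Id}$. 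The missing idea is that ``$\mu$-RFNE'' permits any relaxation parameter at most $\mu$, not exactly $\mu$; this slack is what enables the cancellation. Your phrase ``$\mu$ is chosen'' is also backwards---$\lambda$ and $\mu$ are fixed data in the statement.
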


\begin{proof}
(i) Let $\nu \in 
\mathbb{R}
$ and $x,y\in \mathcal{H}$ be arbitrary. It follows from (\ref{e-RFNE}) that 
$\langle y-x,a_{1}-a_{2}\rangle \geq \frac{1}{\lambda }\Vert
a_{1}-a_{2}\Vert ^{2}$ and $\langle T(y)-T(x),b_{1}-b_{2}\rangle \geq \frac{1%
}{\mu }\Vert b_{1}-b_{2}\Vert ^{2}$. We have 
\begin{eqnarray}
&&\langle y-x,(UT(x)-x)-(UT(y)-y)\rangle -\frac{1}{\nu }\Vert
(UT(x)-x)-(UT(y)-y)\Vert ^{2}  \notag \\
&=&\langle y-x,(a_{1}+b_{1})-(a_{2}+b_{2})\rangle -\frac{1}{\nu }\Vert
(a_{1}+b_{1})-(a_{2}+b_{2})\Vert ^{2}  \label{e-2} \\
&=&\langle y-x,a_{1}-a_{2}\rangle +\langle
T(y)-T(x)+(a_{1}-a_{2}),b_{1}-b_{2}\rangle -\frac{1}{\nu }\Vert
(a_{1}+b_{1})-(a_{2}+b_{2})\Vert ^{2}  \label{e-3} \\
&\geq &\frac{1}{\lambda }\Vert a_{1}-a_{2}\Vert ^{2}+\frac{1}{\mu }\Vert
b_{1}-b_{2}\Vert ^{2}+\langle a_{1}-a_{2},b_{1}-b_{2}\rangle -\frac{1}{\nu }%
\Vert (a_{1}+b_{1})-(a_{2}+b_{2})\Vert ^{2}  \label{e-4} \\
&=&(\frac{1}{\lambda }-\frac{1}{\nu })\Vert a_{1}-a_{2}\Vert ^{2}+(\frac{1}{%
\mu }-\frac{1}{\nu })\Vert b_{1}-b_{2}\Vert ^{2}+(1-\frac{2}{\nu })\langle
a_{1}-a_{2},b_{1}-b_{2}\rangle \text{.}  \label{e-5}
\end{eqnarray}

(ii) Let $\lambda \mu <4$ and $\nu ^{\ast }=\nu (\lambda ,\mu )$ be defined
by (\ref{e-gam-sol}). By Lemma \ref{l-ni}(i), $\nu ^{\ast }\geq \max
\{\lambda ,\mu \}>0$, thus, $\frac{1}{\lambda }-\frac{1}{\nu ^{\ast }}\geq 0$
and $\frac{1}{\mu }-\frac{1}{\nu ^{\ast }}\geq 0$. Now Lemma \ref{l-ni} and
the properties of the inner product give 
\begin{eqnarray}
&&(\frac{1}{\lambda }-\frac{1}{\nu ^{\ast }})\Vert a_{1}-a_{2}\Vert ^{2}+(%
\frac{1}{\mu }-\frac{1}{\nu ^{\ast }})\Vert b_{1}-b_{2}\Vert ^{2}+(1-\frac{2%
}{\nu ^{\ast }})\langle a_{1}-a_{2},b_{1}-b_{2}\rangle  \notag \\
&=&\left\Vert \sqrt{\frac{1}{\lambda }-\frac{1}{\nu ^{\ast }}}%
(a_{1}-a_{2})\mp \sqrt{\frac{1}{\mu }-\frac{1}{\nu ^{\ast }}}%
(b_{1}-b_{2})\right\Vert ^{2}\geq 0\text{,}  \label{e-6}
\end{eqnarray}%
where the sign $\mp $ should be replaced by $-$ if $0<\nu ^{\ast }<2$ and by 
$+$ if $\nu ^{\ast }\geq 2$. By inequalities (\ref{e-2})-(\ref{e-6}), 
\begin{equation}
\langle y-x,(UT(x)-x)-(UT(y)-y)\rangle -\frac{1}{\nu ^{\ast }}\Vert
(UT(x)-x)-(UT(y)-y)\Vert ^{2}\geq 0\text{.}  \label{e-UT-RFNE}
\end{equation}%
By Proposition \ref{p-RFNE} this means that $UT$ is a $\nu ^{\ast }$-RFNE.
Consequently, $UT$ satisfies the demi closedness principle as a relaxation
of an NE operator.

Now suppose that $\dim \mathcal{H}\geq 2$. We prove that $\nu ^{\ast }$ is
optimal. Suppose for simplicity, that $\mathcal{H}=%
\mathbb{R}
^{2}$. Denote $H:=\{x=(x_{1},x_{2})\in 
\mathbb{R}
^{2}:x_{2}=0\}$ and $H_{k}:=\{x=(x_{1},x_{2})\in 
\mathbb{R}
^{2}:x_{1}-kx_{2}=k\}$ and define $T:=(P_{H})_{\lambda }$ and $%
U_{k}:=(P_{H_{k}})_{\mu }$, $k\geq 0$. Clearly, $T$ is $\lambda $-RFNE, $%
U_{k}$ is $\mu $-RFNE and $z_{k}:=(k,0)$ is the only fixed point of $U_{k}T$%
. Let $\rho \in (0,$ $\nu ^{\ast })$. We prove that for some $k\geq 0$ the
operator $U_{k}T$ is not $\rho $-RFNE. For $x=(0,\xi )$ with $\xi \in 
\mathbb{R}
$ we have $T(x)=(0,(1-\lambda )\xi )$, 
\begin{equation}
U_{k}T(x)-x=\left( \frac{\mu k[(1-\lambda )\xi +1]}{k^{2}+1},-\lambda \xi -%
\frac{\mu k^{2}[(1-\lambda )\xi +1]}{k^{2}+1}\right)   \label{e-UkT}
\end{equation}%
and 
\begin{equation}
\langle z_{k}-x,U_{k}T(x)-x\rangle =\frac{\mu k^{2}[(1-\lambda )\xi
+1](1+\xi )}{k^{2}+1}+\xi ^{2}\lambda \text{.}  \label{e-zk-x}
\end{equation}%
Define 
\begin{equation*}
f_{k}(\xi ,\rho ):=\rho \langle z_{k}-x,U_{k}Tx-x\rangle -\Vert
U_{k}T(x)-x\Vert ^{2}\text{.}
\end{equation*}%
In order to prove that for $\rho <\nu ^{\ast }$ and for some $k\geq 0$ the
operator $U_{k}T$ is not $\rho $-RFNE it is enough to show that $f(\xi ,\rho
):=\lim_{k\rightarrow \infty }f_{k}(\xi ,\rho )<0$ for some $\xi \in 
\mathbb{R}
$ and for arbitrary $\rho \in (0,$ $\nu ^{\ast })$. Actually, it is enough
to show this for $\rho $ close to $\nu ^{\ast }$, because, if an operator $S$
is $\nu _{1}$-RFNE and $v_{2}>\nu _{1}$ then $S$ is $\nu _{2}$-RFNE. By (\ref%
{e-UkT})-(\ref{e-zk-x}), we have 
\begin{equation*}
f(\xi ,\rho )=(\lambda +\mu -\lambda \mu )[\rho -(\lambda +\mu -\lambda \mu
)]\xi ^{2}+[\rho (2-\lambda )-2(\lambda +\mu -\lambda \mu )]\mu \xi +\mu
\rho -\mu ^{2}\text{.}
\end{equation*}%
Note that $0<\lambda +\mu -\lambda \mu <\nu ^{\ast }$. If $\rho \in (\lambda
+\mu -\lambda \mu ,\nu ^{\ast })$ then the function $f(\cdot ,\rho )$
attains its minimum at 
\begin{equation}
\xi =\xi ^{\ast }(\rho )=-\frac{[\rho (2-\lambda )-2(\lambda +\mu -\lambda
\mu )]\mu }{2(\lambda +\mu -\lambda \mu )[\rho -(\lambda +\mu -\lambda \mu )]%
}  \label{e-ksi}
\end{equation}%
equal to 
\begin{equation}
h(\rho ):=f(\xi ^{\ast }(\rho ),\rho )=-\frac{\mu ^{2}}{4(\lambda +\mu
-\lambda \mu )}\frac{[\rho (2-\lambda )-2(\lambda +\mu -\lambda \mu )]^{2}}{%
\rho -(\lambda +\mu -\lambda \mu )}+\mu \rho -\mu ^{2}\text{.}
\label{e-h(ro)}
\end{equation}%
A direct calculus shows that $h(\nu ^{\ast })=0$ and that 
\begin{equation*}
\frac{\func{d}h}{\func{d}\rho }(\nu ^{\ast })=\frac{4-\lambda \mu }{\lambda
+\mu -\lambda \mu }>0\text{.}
\end{equation*}%
Thus, for $\rho <\nu ^{\ast }$ and sufficiently close to $\nu ^{\ast }$ we
have $f(\xi ^{\ast }(\rho ),\rho )<0$. This completes the proof of (ii).

(iii) Let $\lambda \mu <\lambda +\mu $ and $\limfunc{Fix}T\cap \limfunc{Fix}%
U\neq \emptyset $. If $\max \{\lambda ,\mu \}<2$ then $T$ and $U$ are SQNE
and it follows from \cite[Prop. 2.10]{BB96} that $\limfunc{Fix}UT=\limfunc{%
Fix}T\cap \limfunc{Fix}U$. Suppose that $\max \{\lambda ,\mu \}\geq 2$.
Lemma \ref{l-ni}(i) yields that $\nu ^{\ast }\geq 2$ if $\lambda \mu <4$.
Moreover $\nu ^{\ast }<0$ if $\lambda \mu >4$. The inclusion $\limfunc{Fix}%
T\cap \limfunc{Fix}U\subseteq \limfunc{Fix}UT$ is clear. We prove that 
\begin{equation}
\limfunc{Fix}UT\subseteq \limfunc{Fix}T\cap \limfunc{Fix}U\text{.}
\label{e-FixUT<}
\end{equation}%
The inclusion is obvious if $\limfunc{Fix}UT=\emptyset $. Suppose that $%
\limfunc{Fix}UT\neq \emptyset $ and that the opposite to (\ref{e-FixUT<})
holds true. Inequalities (\ref{e-2})-(\ref{e-6}) for $y\in \limfunc{Fix}UT$
give 
\begin{equation}
\langle y-x,UTx-x\rangle -\frac{1}{\nu ^{\ast }}\Vert UTx-x\Vert ^{2}\geq
\left\Vert \sqrt{\frac{1}{\lambda }-\frac{1}{\nu ^{\ast }}}(a_{1}-a_{2})+%
\sqrt{\frac{1}{\mu }-\frac{1}{\nu ^{\ast }}}(b_{1}-b_{2})\right\Vert
^{2}\geq 0  \label{e-UT-RFNE2}
\end{equation}%
(note that (\ref{e-6}) with the $+$ sign instead of $\mp $ is also true if $%
4<\lambda \mu <\lambda +\mu $, because $\nu ^{\ast }<0$ in this case). Let $%
x\in \limfunc{Fix}UT$ be such that $x\notin \limfunc{Fix}T\cap \limfunc{Fix}U
$. Then it follows from (\ref{e-UT-RFNE2}) that 
\begin{equation}
\sqrt{\frac{1}{\lambda }-\frac{1}{\nu ^{\ast }}}(a_{1}-a_{2})+\sqrt{\frac{1}{%
\mu }-\frac{1}{\nu ^{\ast }}}(b_{1}-b_{2})=0
\end{equation}%
and 
\begin{equation*}
(a_{1}-a_{2})+(b_{1}-b_{2})=a_{1}+b_{1}=UTx-x=0\text{.}
\end{equation*}%
This leads to $\sqrt{\frac{1}{\lambda }-\frac{1}{\nu ^{\ast }}}=\sqrt{\frac{1%
}{\mu }-\frac{1}{\nu ^{\ast }}}$, consequently, $\lambda =\mu \geq 2$. This
stands in contradiction to the assumption $\lambda \mu <\lambda +\mu $,
which proves (\ref{e-FixUT<}).

(iv) We consider 2 cases:

(a) $4<\lambda \mu \leq \lambda +\mu $ and $\dim \mathcal{H}\geq 2$. For
simplicity we suppose that $\mathcal{H}=%
\mathbb{R}
^{2}$. For the operators $T$ and $U_{k}$ we use the same notation as in
(ii). Let $x=(0,\xi )$ with $\xi \in 
\mathbb{R}
$. Then $U_{k}T(x)$ is given by (\ref{e-UkT}). Suppose that for all $k\geq 0$
the operators $U_{k}T$ are RFNE, consequently, they are relaxed cutters.
Then $\langle z_{k}-x,U_{k}T(x)-x\rangle \geq \frac{1}{\nu _{k}}\Vert
U_{k}T(x)-x\Vert ^{2}\geq 0$ for some $\nu _{k}>0$, $k\geq 0$, and 
\begin{eqnarray*}
\alpha &:&=\lim_{k}\langle z_{k}-x,U_{k}Tx-x\rangle =\lim_{k}\frac{\mu
k^{2}[(1-\lambda )\xi +1](1+\xi )}{k^{2}+1}+\xi ^{2}\lambda \\
&=&\mu \lbrack (1-\lambda )\xi +1](1+\xi )+\xi ^{2}\lambda =(\lambda +\mu
-\lambda \mu )\xi ^{2}+(2-\lambda )\mu \xi +\mu \geq 0
\end{eqnarray*}%
for all $\xi \in 
\mathbb{R}
$. If $\lambda \mu =\lambda +\mu $ then we obtain $\alpha <0$ for some $\xi $%
, a contradiction. If $\lambda \mu <\lambda +\mu $ then 
\begin{equation*}
\Delta :=(2-\lambda )^{2}\mu ^{2}-4\mu (\lambda +\mu -\lambda \mu )=\lambda
\mu (\lambda \mu -4)>0\text{.}
\end{equation*}%
Thus, for $\xi =\frac{(\lambda -2)\mu }{2(\lambda +\mu -\lambda \mu )}$ we
obtain $\alpha <0$, a contradiction. Thus, there is $k\geq 1$ for which $%
\langle z_{k}-x,U_{k}Tx-x\rangle <0$. This means that $U_{k}T$ is not a
relaxed cutter, thus $U_{k}T$ is not RFNE.

(b) $\lambda \mu >\lambda +\mu $. Then $\lambda >1$. Let $H\subseteq 
\mathcal{H}$ be a hyperplane. Denote $P:=P_{H}$ and define $T:=P_{\lambda }$
and $U:=P_{\mu }$. Clearly, $T$ is $\lambda $-RFNE, $U$ is $\mu $-RFNE and $%
\limfunc{Fix}UT=\limfunc{Fix}T=\limfunc{Fix}U=H$. Let $x\in \mathcal{H}$ and 
$z\in H$ be arbitrary. We have $y:=P_{\lambda }(x)=x+\lambda (P(x)-x)$. By
the properties of the metric projection, $P(y)=P(x)$. Thus, 
\begin{eqnarray*}
UT(x)-x &=&P_{\mu }P_{\lambda }(x)-x=y+\mu (P(y)-y)-x \\
&=&x+\lambda (P(x)-x)+\mu (1-\lambda )(P(x)-x)-x \\
&=&(\lambda +\mu -\lambda \mu )(P(x)-x)
\end{eqnarray*}%
and 
\begin{eqnarray*}
\langle z-x,UT(x)-x\rangle &=&(\lambda +\mu -\lambda \mu )\langle
z-x,P(x)-x\rangle \\
&=&(\lambda +\mu -\lambda \mu )\Vert P(x)-x\Vert ^{2}<0
\end{eqnarray*}%
if $x\notin H$, which means that $UT$ is not a relaxed cutter, thus $UT$ is
not RFNE.

(v) Let $\lambda \mu \geq \lambda +\mu $. Then $\lambda >1$. Let $H\subseteq 
\mathcal{H}$ be a hyperplane. Denote $P:=P_{H}$, $\sigma :=\frac{\lambda }{%
\mu (\lambda -1)}$ and define $T:=P_{\lambda }$ and $U:=(P_{\sigma })_{\mu
}=P_{\sigma \mu }$. Clearly, $T$ is $\lambda $-RFNE, $U$ is $\sigma \mu $%
-RFNE and $\limfunc{Fix}T=\limfunc{Fix}U=H$. Similarly as in the proof of
case (b) of (iv), for any $x\in \mathcal{H}$ we have%
\begin{eqnarray*}
UT(x)-x &=&P_{\sigma \mu }P_{\lambda }(x)-x=(\lambda +\sigma \mu -\lambda
\sigma \mu )(P(x)-x) \\
&=&(\lambda +\frac{\lambda }{\lambda -1}-\frac{\lambda ^{2}}{\lambda -1}%
)(P(x)-x)=0
\end{eqnarray*}%
which means that $\limfunc{Fix}UT=\mathcal{H}\neq H=\limfunc{Fix}T\cap 
\limfunc{Fix}U$.
\end{proof}

\bigskip

By Proposition \ref{c-SPC-RFNE}, there is equivalence between $\alpha $-SPC
and $\lambda $-RFNE operators, where $\alpha \in (-\infty ,1)$ and $\lambda
=2/(1-\alpha )\in (0,+\infty )$. Thus, Theorem \ref{t-comp-RFNE} can be
presented in terms of SPC instead of RFNE operators. Note that for $\alpha
,\beta \in (-\infty ,1)$, the inequality $\alpha +\beta <\alpha \beta $
implies that $\alpha +\beta <0$, consequently at least one of $\alpha ,\beta 
$ is negative and $\frac{\alpha \beta }{\alpha +\beta }<1$ (see Appendix).

\begin{corollary}
\label{c-comp-SPC}Let $\alpha ,\beta \in (-\infty ,1)$, $T:\mathcal{H}%
\rightarrow \mathcal{H}$ be an $\alpha $-SPC, $U:\mathcal{H}\rightarrow 
\mathcal{H}$ be a $\beta $-SPC.

\begin{enumerate}
\item[$\mathrm{(i)}$] If $\alpha +\beta <\alpha \beta $ then $UT$ is a $%
\gamma ^{\ast }$-SPC, where 
\begin{equation}
\gamma ^{\ast }=\gamma (\alpha ,\beta )=\frac{\alpha \beta }{\alpha +\beta }%
\text{.}  \label{e-gamma*}
\end{equation}

Consequently, $UT$ satisfies the demi closedness principle. If, additionally,

\begin{enumerate}
\item[$\mathrm{(a)}$] $\alpha ,\beta <0$ then $\gamma ^{\ast }<0$;

\item[$\mathrm{(b)}$] $\alpha \beta <0$ then $\gamma ^{\ast }\in (0,1)$.
\end{enumerate}

Moreover, if $\dim \mathcal{H}\geq 2$ then the constant $\gamma ^{\ast }$ is
optimal, i.e. for arbitrary $\rho \in (-\infty ,\gamma ^{\ast })$ there are
an $\alpha $-SPC\quad $T$ and a $\beta $-SPC\quad $U$ such that $UT$ is not
a $\rho $-SPC.

\item[$\mathrm{(ii)}$] If $\alpha +\beta <0$ and $\limfunc{Fix}T\cap 
\limfunc{Fix}U\neq \emptyset $ then $\limfunc{Fix}UT=\limfunc{Fix}T\cap 
\limfunc{Fix}U$.

\item[$\mathrm{(iii)}$] Suppose that $\alpha +\beta >\alpha \beta $. If $%
\alpha +\beta \leq 0$ and $\dim \mathcal{H}\geq 2$ or if $\alpha +\beta >0$
then there are an $\alpha $-SPC\quad $T$ and a $\beta $-SPC \quad $U$ such
that $UT$ is not an SPC.

\item[$\mathrm{(iv)}$] If $\alpha +\beta \geq 0$ then there are an $\alpha $%
-SPC\quad $T$ and a $\beta $-SPC\quad $U$ such that $\limfunc{Fix}UT\neq 
\limfunc{Fix}T\cap \limfunc{Fix}U\neq \emptyset $.
\end{enumerate}
\end{corollary}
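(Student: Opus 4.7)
The plan is to reduce the corollary to Theorem \ref{t-comp-RFNE} via the bijection of Proposition \ref{c-SPC-RFNE}. Set $\lambda:=2/(1-\alpha)$ and $\mu:=2/(1-\beta)$, so $\lambda,\mu\in(0,+\infty)$ and $T$ is $\lambda$-RFNE while $U$ is $\mu$-RFNE. The whole argument then becomes a dictionary translation between the RFNE parameters $(\lambda,\mu,\nu)$ and the SPC parameters $(\alpha,\beta,\gamma)$ under the involution $\gamma=(\nu-2)/\nu$, $\nu=2/(1-\gamma)$.

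The first step is to match the hypotheses. A direct computation gives $\lambda\mu=4/[(1-\alpha)(1-\beta)]$ and $\lambda+\mu-\lambda\mu=-2(\alpha+\beta)/[(1-\alpha)(1-\beta)]$, so
\begin{equation*}
\lambda\mu<4\iff\alpha+\beta<\alpha\beta,\qquad \lambda\mu<\lambda+\mu\iff\alpha+\beta<0,\qquad \lambda\mu\geq\lambda+\mu\iff\alpha+\beta\geq0,
\end{equation*}
with the analogous equivalences for the remaining inequalities in Theorem \ref{t-comp-RFNE}(iv)--(v). Using these, one substitutes $\lambda$ and $\mu$ into $\nu^{*}=4(\lambda+\mu-\lambda\mu)/(4-\lambda\mu)$ and checks that $(\nu^{*}-2)/\nu^{*}=\alpha\beta/(\alpha+\beta)$, which identifies $\gamma^{*}$ of (\ref{e-gamma*}) as the SPC-parameter corresponding to the RFNE-parameter $\nu^{*}$ produced by Theorem \ref{t-comp-RFNE}(ii).

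Next, I would read off each item of the corollary from the corresponding item of Theorem \ref{t-comp-RFNE}. Part (i) is immediate from (ii) of the theorem: $UT$ is $\nu^{*}$-RFNE, hence a $\gamma^{*}$-SPC by Proposition \ref{c-SPC-RFNE}, and it satisfies the demi closedness principle since it is a relaxation of an NE operator. Optimality of $\gamma^{*}$ in dimension $\geq 2$ transfers from optimality of $\nu^{*}$ because $\gamma=(\nu-2)/\nu$ is strictly increasing in $\nu>0$ (and strictly increasing on $\nu<0$ as well): any $\rho<\gamma^{*}$ corresponds to some $\rho'<\nu^{*}$, so the RFNE counterexample in Theorem \ref{t-comp-RFNE}(ii) furnishes an SPC counterexample. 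Subcases (a) and (b) are easy sign arguments: if $\alpha,\beta<0$ then $\alpha+\beta<0<\alpha\beta$, so $\gamma^{*}<0$; if $\alpha\beta<0$, the assumption $\alpha+\beta<\alpha\beta<0$ forces $\gamma^{*}=\alpha\beta/(\alpha+\beta)\in(0,1)$, with $\gamma^{*}<1$ equivalent to $\alpha\beta>\alpha+\beta$. Part (ii) is Theorem \ref{t-comp-RFNE}(iii) rewritten via the first equivalence above. Parts (iii) and (iv) are the translations of Theorem \ref{t-comp-RFNE}(iv) and (v), respectively, with the same counterexamples (the $T=(P_{H})_{\lambda}$, $U_{k}=(P_{H_{k}})_{\mu}$ family and the hyperplane-projection constructions) interpreted through the $\lambda\leftrightarrow\alpha$, $\mu\leftrightarrow\beta$ substitution.

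No step is actually hard: the only real work is the bookkeeping in the parameter translation, in particular verifying algebraically that $\nu^{*}=-2(\alpha+\beta)/(\alpha\beta-\alpha-\beta)$ maps to $\gamma^{*}=\alpha\beta/(\alpha+\beta)$ and that the three inequality regimes $\lambda\mu<4$, $4<\lambda\mu\leq\lambda+\mu$, $\lambda\mu>\lambda+\mu$ correspond exactly to the SPC regimes appearing in (i), (iii), (iv). The mildly delicate point is (iv): the hypothesis $\alpha+\beta\geq 0$ alone allows $\alpha+\beta=\alpha\beta$, and one should verify that this edge case is still covered by the $\lambda\mu\geq\lambda+\mu$ construction in Theorem \ref{t-comp-RFNE}(v), which it is since that proof only uses $\lambda>1$ (equivalently $\alpha>0$), and $\alpha+\beta\geq 0$ with $\beta<1$ forces $\alpha>-\beta>-1$, and in fact $\alpha>0$ whenever $\alpha+\beta\geq\alpha\beta$ with at least one of $\alpha,\beta$ positive.
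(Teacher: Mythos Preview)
Your proposal is correct and follows exactly the approach the paper intends: the paper does not give a separate proof of Corollary \ref{c-comp-SPC} but simply states that, via the bijection of Proposition \ref{c-SPC-RFNE}, Theorem \ref{t-comp-RFNE} can be rewritten in SPC language, and your dictionary translation (with the verifications $\lambda\mu<4\iff\alpha+\beta<\alpha\beta$, $\lambda\mu<\lambda+\mu\iff\alpha+\beta<0$, and $(\nu^{*}-2)/\nu^{*}=\alpha\beta/(\alpha+\beta)$) carries this out in full. Your final paragraph on the edge case in (iv) is more cautious than necessary---the equivalence $\lambda\mu\geq\lambda+\mu\iff\alpha+\beta\geq0$ is exact, and the proof of Theorem \ref{t-comp-RFNE}(v) only needs $\lambda>1$, which under $\alpha+\beta\geq0$ and $\beta<1$ follows from $\alpha\geq-\beta>-1$---but this does no harm.
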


Let $\lambda ,\mu >0$ be such that $\lambda \mu <4$, $T:\mathcal{H}%
\rightarrow \mathcal{H}$ be $\lambda $-RFNE, $U:\mathcal{H}\rightarrow 
\mathcal{H}$ be $\mu $-RFNE (equivalently, let $T:\mathcal{H}\rightarrow 
\mathcal{H}$ be an $\alpha $-SPC, $U:\mathcal{H}\rightarrow \mathcal{H}$ be
a $\beta $-SPC, where $\alpha ,\beta \in (-\infty ,1)$ are such that $\alpha
+\beta <\alpha \beta $) with $\limfunc{Fix}(UT)\neq \emptyset $. Theorem \ref%
{t-comp-RFNE} (equivalently, Corollary \ref{c-comp-SPC}) together with
Proposition \ref{p-KM} and the fact that $(UT)_{1/\nu ^{\ast }}$ is FNE
yield the weak convergence of sequences $x^{k+1}=(UT)_{\lambda _{k}/\nu
^{\ast }}(x^{k})$ to some $x^{\ast }\in \limfunc{Fix}(UT)$, where $\lambda
_{k}\in \lbrack \varepsilon -2-\varepsilon ]$ for some small $\varepsilon >0$%
. In Section \ref{s-4} we show that an enlarged range of $\lambda _{k}$
guarantees the weak convergence (see Theorem \ref{t-iter-RFNE} and
Corollaries \ref{c-iter-SPCa} and \ref{c-iter-SPC}).

\bigskip

If we set $y\in \limfunc{Fix}T$ in Definition \ref{d-NE}(e) then we receive
the definition of an $\alpha $-demicontraction (equivalently a $\lambda $%
-relaxed cutter, see Corollary \ref{c-dc-c}) Thus, Theorem \ref{t-comp-RFNE}
yields a part of the following result which extends a well known result of
Yamada and Ogura \cite[Proposition 1(d)]{YO04} (cf. \cite[Theorem 2.1.46]%
{Ceg12}), where only the case $\lambda ,\mu \in (0,2)$ was considered. The
optimality of the constant $\nu ^{\ast }$ which was proved in Theorem \ref%
{t-comp-RFNE} also applies for composition of relaxed cutters. As far as we
know, the results presented in the theorem below are new. Denote $a:=T(x)-x$
and $b:=UT(x)-T(x)$.

\begin{theorem}
\label{t-comp-dc}Let $\lambda ,\mu >0$, $T:\mathcal{H}\rightarrow \mathcal{H}
$ be a $\lambda $-relaxed cutter, $U:\mathcal{H}\rightarrow \mathcal{H}$ be
a $\mu $-relaxed cutter and let $\limfunc{Fix}T\cap \limfunc{Fix}U\neq
\emptyset $. Then

\begin{enumerate}
\item[$\mathrm{(i)}$] For arbitrary $x\in \mathcal{H}$ and $z\in \limfunc{Fix%
}T\cap \limfunc{Fix}U$ and for arbitrary $\nu \in 
\mathbb{R}
$ it holds that 
\begin{equation}
\langle z-x,UT(x)-x\rangle -\frac{1}{\nu }\Vert UT(x)-x\Vert ^{2}\geq (\frac{%
1}{\lambda }-\frac{1}{\nu })\Vert a\Vert ^{2}+(\frac{1}{\mu }-\frac{1}{\nu }%
)\Vert b\Vert ^{2}+(1-\frac{2}{\nu })\langle a,b\rangle \text{.}
\label{e-comp-dc}
\end{equation}

\item[$\mathrm{(ii)}$] If $\lambda \mu <4$ then $UT$ is a $\nu ^{\ast }$%
-relaxed cutter, where $\nu ^{\ast }=\nu (\lambda ,\mu )$ is given by $%
\mathrm{(\ref{e-gam-sol})}$. Moreover, if $\dim \mathcal{H}\geq 2$ then the
constant $\nu ^{\ast }$ is optimal, i.e. for arbitrary $\rho \in (0,\nu
^{\ast })$ there are a $\lambda $-relaxed cutter $T$ and a $\mu $-relaxed
cutter $U$ such that $UT$ is not a $\rho $-relaxed cutter.

\item[$\mathrm{(iii)}$] If $\lambda \mu <\lambda +\mu $ then $\limfunc{Fix}%
UT=\limfunc{Fix}T\cap \limfunc{Fix}U$.

\item[$\mathrm{(iv)}$] Suppose that $\lambda \mu >4$. If $\lambda \mu \leq
\lambda +\mu $ and $\dim \mathcal{H}\geq 2$ or if $\lambda \mu >\lambda +\mu 
$ then there are a $\lambda $-relaxed cutter $T$ and a $\mu $-relaxed cutter 
$U$ such that $UT$ is not a relaxed cutter.

\item[$\mathrm{(v)}$] If $\lambda \mu \geq \lambda +\mu $ then there are a $%
\lambda $-relaxed cutter $T$ and a $\mu $-relaxed cutter $U$ such that $%
\limfunc{Fix}UT\neq \limfunc{Fix}T\cap \limfunc{Fix}U$.

\item[$\mathrm{(vi)}$] Suppose that $T$ and $U$ satisfy the demi closedness
principle. If $4\neq \lambda \mu <\lambda +\mu $ then the operators $UT$ and 
$(UT)_{1/\nu ^{\ast }}$ also satisfy the demi closedness principle.
\end{enumerate}
\end{theorem}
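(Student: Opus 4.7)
The plan is to parallel Theorem \ref{t-comp-RFNE} throughout, exploiting that a $\lambda$-relaxed cutter inequality is the one-point specialization of the RFNE inequality with $y$ replaced by $z\in\limfunc{Fix}T$. For (i), I would apply $\lambda\langle z-x, T(x)-x\rangle\geq \|T(x)-x\|^2$ for $T$ at $x$ and $\mu\langle z-T(x), UT(x)-T(x)\rangle\geq \|UT(x)-T(x)\|^2$ for $U$ at $T(x)$, using $z\in\limfunc{Fix}T\cap\limfunc{Fix}U$; then split $\langle z-x, UT(x)-x\rangle=\langle z-x,a\rangle+\langle z-T(x),b\rangle+\langle a,b\rangle$ and expand $\|UT(x)-x\|^2=\|a+b\|^2$. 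The bookkeeping matches (\ref{e-2})--(\ref{e-5}) line by line.

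For (ii), when $\lambda\mu<4$, Lemma \ref{l-ni} yields $\nu^*\geq\max\{\lambda,\mu\}>0$, so $A:=1/\lambda-1/\nu^*\geq 0$ and $B:=1/\mu-1/\nu^*\geq 0$; equation (\ref{e-gam-eq}) asserts $2\sqrt{AB}=|1-2/\nu^*|$, allowing the right-hand side of (\ref{e-comp-dc}) at $\nu=\nu^*$ to be rewritten as $\|\sqrt{A}\,a\mp\sqrt{B}\,b\|^2\geq 0$, with the sign matching that of $1-2/\nu^*$. This gives the $\nu^*$-relaxed cutter property. The optimality claim in (ii) and the counterexamples for (iv), (v) transfer verbatim from Theorem \ref{t-comp-RFNE}: the relaxed projections $(P_H)_\lambda$ and $(P_{H_k})_\mu$ used there are cutters, and the inequalities violated there are already $\lambda$-relaxed cutter inequalities at the unique common fixed point.

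Part (iii) admits a shorter argument than its RFNE analogue: letting $\nu\to\infty$ in (\ref{e-comp-dc}) yields $\langle z-x, UT(x)-x\rangle\geq \frac{1}{\lambda}\|a\|^2+\frac{1}{\mu}\|b\|^2+\langle a,b\rangle$. For $x\in\limfunc{Fix}(UT)$ the left-hand side vanishes and $b=-a$, reducing the inequality to $0\geq(\frac{1}{\lambda}+\frac{1}{\mu}-1)\|a\|^2$. Under $\lambda\mu<\lambda+\mu$ the coefficient $(\lambda+\mu-\lambda\mu)/(\lambda\mu)$ is strictly positive, forcing $a=0$ and hence $b=0$, i.e.\ $x\in\limfunc{Fix}T\cap\limfunc{Fix}U$; the reverse inclusion is trivial.

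For (vi), fix $z\in\limfunc{Fix}T\cap\limfunc{Fix}U$ (nonempty by hypothesis, and by (iii) equal to $\limfunc{Fix}(UT)$) and evaluate (\ref{e-comp-dc}) at $\nu=\nu^*$ along a bounded sequence $\{x^k\}$ with $UT(x^k)-x^k\to 0$. Both terms on the left-hand side tend to $0$ (the first by Cauchy--Schwarz and boundedness of $\{x^k\}$), so $\|\sqrt{A}\,a^k\mp\sqrt{B}\,b^k\|\to 0$; combined with $a^k+b^k=UT(x^k)-x^k\to 0$ this is a $2\times 2$ linear system in $(a^k,b^k)$ with determinant $\sqrt{A}\mp\sqrt{B}$. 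Using Lemma \ref{l-ni} and (\ref{e-gam-sol}) one checks that, under the hypothesis $4\neq\lambda\mu<\lambda+\mu$, a vanishing determinant forces $\lambda=\mu=2$ and therefore $\lambda\mu=4$, which is excluded; hence $a^k\to 0$ and $b^k\to 0$ strongly. The demi closedness of $T$ then gives $y\in\limfunc{Fix}T$ from $a^k\to 0$ and $x^k\rightharpoonup y$, and the demi closedness of $U$ applied to $T(x^k)=x^k+a^k\rightharpoonup y$ with $b^k\to 0$ gives $y\in\limfunc{Fix}U$, so $y\in\limfunc{Fix}(UT)$ by (iii). The claim for $(UT)_{1/\nu^*}$ follows at once since $(UT)_{1/\nu^*}(x)-x=(1/\nu^*)(UT(x)-x)$ has the same null sequences and the same zero set. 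The main obstacle is the sign alternation and, in particular, the regime $4<\lambda\mu<\lambda+\mu$ in which $\nu^*<0$ and $UT$ is not a classical relaxed cutter; nevertheless (\ref{e-comp-dc}) still delivers $A,B>0$ and the $L^2$-type control needed to run the demi closedness argument.
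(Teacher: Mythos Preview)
Your proposal is correct and follows the paper's overall strategy of specializing Theorem~\ref{t-comp-RFNE} to $y=z\in\limfunc{Fix}T\cap\limfunc{Fix}U$, but you improve on the paper in two places. For (iii) the paper runs the perfect-square argument at $\nu=\nu^{\ast}$ and reaches a contradiction via $\sqrt{\tfrac{1}{\lambda}-\tfrac{1}{\nu^{\ast}}}=\sqrt{\tfrac{1}{\mu}-\tfrac{1}{\nu^{\ast}}}$; your $\nu\to\infty$ limit of (\ref{e-comp-dc}) is shorter and yields the conclusion directly from the sign of $\tfrac{1}{\lambda}+\tfrac{1}{\mu}-1$. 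For (vi) the paper first disposes of the case $\max\{\lambda,\mu\}<2$ by quoting \cite[Th.~4.2]{Ceg15} and then, for the remaining cases, passes to a subsequence along which $\Vert a^{m_k}\Vert\to\alpha$ and derives a contradiction; your $2\times 2$ linear-system argument handles all regimes uniformly and avoids both the citation and the subsequence. One small refinement: in the $+$-sign regime the vanishing of $\sqrt{A}-\sqrt{B}$ gives only $\lambda=\mu$, not directly $\lambda=\mu=2$; you then need the observation (which you essentially have) that $\lambda=\mu$ together with either $\nu^{\ast}\geq 2$ or $\nu^{\ast}<0$ forces $\lambda\geq 2$, hence $\lambda\mu\geq\lambda+\mu$, contradicting the hypothesis.
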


\begin{proof}
Let $x\in \mathcal{H}$ and $z\in \limfunc{Fix}T\cap \limfunc{Fix}U$ be
arbitrary. Denote $a:=T(x)-x$ and $b:=UT(x)-T(x)$. It follows from (\ref%
{e-RC}) that $\langle z-x,a\rangle \geq \frac{1}{\lambda }\Vert a\Vert ^{2}$
and $\langle z-T(x),b\rangle \geq \frac{1}{\mu }\Vert b\Vert ^{2}$. Thus,
for relaxed cutters $T$ and $U$ inequalities (\ref{e-2})-(\ref{e-5}) hold
for $y=z$ which proves (i). This and inequality (\ref{e-6}) gives 
\begin{equation}
\langle z-x,UT(x)-x\rangle -\frac{1}{\nu ^{\ast }}\Vert UT(x)-x\Vert
^{2}\geq \left\Vert \sqrt{\frac{1}{\lambda }-\frac{1}{\nu ^{\ast }}}a\mp 
\sqrt{\frac{1}{\mu }-\frac{1}{\nu ^{\ast }}}b\right\Vert ^{2}\geq 0\text{,}
\label{e-UT-DC}
\end{equation}%
where the sign $\mp $ should be replaced by $-$ if $0<\nu ^{\ast }<2$ and by 
$+$ if $\nu ^{\ast }\geq 2$ or $\nu ^{\ast }<0$. The proof of (ii)-(v) is
similar to the proof of Theorem \ref{t-comp-RFNE}.

(vi) Let $4\neq \lambda \mu <\lambda +\mu $. If $\max \{\lambda ,\mu \}<2$
then $T,U$ are SQNE and $UT$ satisfies the demi closedness principle \cite[%
Th. 4.2]{Ceg15}. Suppose that $\max \{\lambda ,\mu \}\geq 2$. Then $\nu
(\lambda ,\mu )\geq 2$ or $\nu (\lambda ,\mu )<0$, as it was observed
before. In this case, the sign $\mp $ in (\ref{e-UT-DC}) should be replaced
by $+$. Let $z\in \limfunc{Fix}T$ be fixed, $\{x^{k}\}_{k=0}^{\infty }$ be a
bounded sequence with $\Vert UT(x^{k})-x^{k}\Vert \rightarrow 0$, $y\in 
\mathcal{H}$ be its weak cluster point and let $\{x^{n_{k}}\}_{k=0}^{\infty }
$ be its subsequence such that $x^{n_{k}}\rightharpoonup y$. We prove that $%
y\in \limfunc{Fix}UT$. Note that $T_{\lambda ^{-1}}$ is a cutter and $%
\limfunc{Fix}T_{\lambda ^{-1}}=\limfunc{Fix}T$. By \cite[Cor. 2.1.37]{Ceg12}%
, 
\begin{equation*}
\Vert T(x^{k})-x^{k}\Vert =\lambda \Vert T_{\lambda ^{-1}}(x^{k})-x^{k}\Vert
\leq \lambda \Vert P_{\limfunc{Fix}T}(x^{k})-x^{k}\Vert \leq \lambda \Vert
P_{\limfunc{Fix}T}(x^{k})-z\Vert \leq \lambda \Vert x^{k}-z\Vert \text{,}
\end{equation*}%
i.e. $\Vert T(x^{k})-x^{k}\Vert $ is bounded. Similarly as before, denote $%
a^{k}:=T(x^{k})-x^{k}$ and $b^{k}:=UT(x^{k})-T(x^{k})$. Let $%
\{x^{m_{k}}\}_{k=0}^{\infty }\subseteq \{x^{n_{k}}\}_{k=0}^{\infty }$ be
such that $\Vert a^{m_{k}}\Vert \rightarrow \alpha $ for some $\alpha \geq 0$%
. We prove that $\alpha =0$. Suppose that the opposite holds, i.e. $\alpha >0
$. By setting $x=x^{m_{k}}$ in (\ref{e-UT-DC}), we obtain%
\begin{equation*}
0\leq \left\Vert \sqrt{\frac{1}{\lambda }-\frac{1}{\nu ^{\ast }}}a^{m_{k}}+%
\sqrt{\frac{1}{\mu }-\frac{1}{\nu ^{\ast }}}b^{m_{k}}\right\Vert ^{2}\leq
\langle z-x^{m_{k}},UT(x^{m_{k}})-x^{m_{k}}\rangle -\frac{1}{\nu }\Vert
UT(x^{m_{k}})-x^{m_{k}}\Vert ^{2}\rightarrow 0\text{,}
\end{equation*}%
consequently, 
\begin{equation*}
\left\Vert \sqrt{\frac{1}{\lambda }-\frac{1}{\nu ^{\ast }}}a^{m_{k}}+\sqrt{%
\frac{1}{\mu }-\frac{1}{\nu ^{\ast }}}b^{m_{k}}\right\Vert \rightarrow 0
\end{equation*}%
Because $a^{k}+b^{k}=UT(x^{k})-x^{k}\rightarrow 0$, it holds $%
b^{k}=-a^{k}+d^{k}$ with $d^{k}\rightarrow 0$, thus $\Vert b^{m_{k}}\Vert
\rightarrow \alpha $. By the triangle inequality, 
\begin{equation*}
\left\Vert \sqrt{\frac{1}{\lambda }-\frac{1}{\nu ^{\ast }}}a^{m_{k}}+\sqrt{%
\frac{1}{\mu }-\frac{1}{\nu ^{\ast }}}b^{m_{k}}\right\Vert \geq \left\vert 
\sqrt{\frac{1}{\lambda }-\frac{1}{\nu ^{\ast }}}-\sqrt{\frac{1}{\mu }-\frac{1%
}{\nu ^{\ast }}}\right\vert \Vert a^{m_{k}}\Vert -\sqrt{\frac{1}{\mu }-\frac{%
1}{\nu ^{\ast }}}\Vert d^{m_{k}}\Vert \text{.}
\end{equation*}%
This yields 
\begin{equation*}
\left\vert \sqrt{\frac{1}{\lambda }-\frac{1}{\nu ^{\ast }}}-\sqrt{\frac{1}{%
\mu }-\frac{1}{\nu ^{\ast }}}\right\vert \Vert a^{m_{k}}\Vert \rightarrow 0%
\text{,}
\end{equation*}%
consequently, $\sqrt{\frac{1}{\lambda }-\frac{1}{\nu ^{\ast }}}=\sqrt{\frac{1%
}{\mu }-\frac{1}{\nu ^{\ast }}}$ which yields $\lambda =\mu \geq 2$, a
contradiction to the assumption $\lambda \mu <\lambda +\mu $. Thus, $\alpha
=0$, i.e. $\Vert T(x^{m_{k}})-x^{m_{k}}\Vert =\Vert a^{m_{k}}\Vert
\rightarrow 0$ and $\Vert UT(x^{m_{k}})-T(x^{m_{k}})\Vert =\Vert
b^{m_{k}}\Vert \rightarrow 0$. Because $T$ satisfies the demi closedness
principle, $y\in \limfunc{Fix}T$. Moreover $%
y^{m_{k}}:=T(x^{m_{k}})=a^{m_{k}}+x^{m_{k}}\rightharpoonup y$. Because $U$
satisfies the demi closedness principle, $y\in \limfunc{Fix}U$. Thus, $y\in 
\limfunc{Fix}UT$, i.e. $UT$ satisfies the demi closedness principle.
Consequently, $(UT)_{1/\nu ^{\ast }}$ also satisfies the demi closedness
principle as a relaxation of an operator satisfying the demi closedness
principle.
\end{proof}

\bigskip

Theorem \ref{t-comp-dc} can be presented in terms of demicontractions
instead of relaxed cutters.

\begin{corollary}
\label{c-comp-dc}Let $\alpha ,\beta \in (-\infty ,1)$, $T:\mathcal{H}%
\rightarrow \mathcal{H}$ be an $\alpha $-demicontraction, $U:\mathcal{H}%
\rightarrow \mathcal{H}$ be a $\beta $-demicontraction, and let $\limfunc{Fix%
}T\cap \limfunc{Fix}U\neq \emptyset $.

\begin{enumerate}
\item[$\mathrm{(i)}$] If $\alpha +\beta <\alpha \beta $ then $UT$ is a $%
\gamma ^{\ast }$-demicontraction, where $\gamma ^{\ast }$ is given by $%
\mathrm{(\ref{e-gamma*})}$.

If, additionally,

\begin{enumerate}
\item[$\mathrm{(a)}$] $\alpha ,\beta <0$ then $\gamma ^{\ast }<0$ and $UT$
is $-\gamma ^{\ast }$-SQNE;

\item[$\mathrm{(b)}$] $\alpha \beta <0$ then $\gamma ^{\ast }\in (0,1)$.
\end{enumerate}

Moreover, if $\dim \mathcal{H}\geq 2$ then the constant $\gamma ^{\ast }$ is
optimal, i.e. for arbitrary $\rho \in (-\infty ,\gamma ^{\ast })$ there are
an $\alpha $-demicontraction $T$ and a $\beta $-demicontraction $U$ such
that $UT$ is not a $\rho $-demicontraction.

\item[$\mathrm{(ii)}$] If $\alpha +\beta <0$ then $\limfunc{Fix}UT=\limfunc{%
Fix}T\cap \limfunc{Fix}U$.

\item[$\mathrm{(iii)}$] Suppose that $\alpha +\beta >\alpha \beta $. If $%
\alpha +\beta \leq 0$ and $\dim \mathcal{H}\geq 2$ or if $\alpha +\beta >0$
then there are an $\alpha $-demicontraction $T$ and a $\beta $%
-demicontraction $U$ such that $UT$ is not a demicontraction.

\item[$\mathrm{(iv)}$] If $\alpha +\beta \geq 0$ then there are an $\alpha $%
-demicontraction $T$ and a $\beta $-demicontraction $U$ such that $\limfunc{%
Fix}UT\neq \limfunc{Fix}T\cap \limfunc{Fix}U$.

\item[$\mathrm{(v)}$] Suppose that $T$ and $U$ satisfy the demi closedness
principle. If $\alpha +\beta <0$ then the operators $UT$ and $(UT)_{1/\nu
^{\ast }}$ also satisfy the demi closedness principle.
\end{enumerate}
\end{corollary}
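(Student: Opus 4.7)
The plan is to reduce Corollary \ref{c-comp-dc} entirely to Theorem \ref{t-comp-dc} via the bijection of Corollary \ref{c-dc-c}. Given $\alpha,\beta\in(-\infty,1)$, set $\lambda:=2/(1-\alpha)$ and $\mu:=2/(1-\beta)$, both in $(0,+\infty)$. By Corollary \ref{c-dc-c}, $T$ is a $\lambda$-relaxed cutter and $U$ is a $\mu$-relaxed cutter. Since $\limfunc{Fix}T\cap\limfunc{Fix}U\neq\emptyset$, Theorem \ref{t-comp-dc} is applicable. The heart of the proof is to verify that, under this substitution, each hypothesis and each conclusion on $(\lambda,\mu,\nu^{\ast})$ translates into the corresponding statement on $(\alpha,\beta,\gamma^{\ast})$.

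The first step is a block of short algebraic identities. A direct computation (clearing denominators) yields
\begin{equation*}
\lambda\mu=\frac{4}{(1-\alpha)(1-\beta)},\quad \lambda+\mu-\lambda\mu=\frac{-2(\alpha+\beta)}{(1-\alpha)(1-\beta)},\quad 4-\lambda\mu=\frac{4(\alpha\beta-\alpha-\beta)}{(1-\alpha)(1-\beta)}.
\end{equation*}
Plugging into \eqref{e-gam-sol} gives $\nu^{\ast}=-2(\alpha+\beta)/(\alpha\beta-\alpha-\beta)$, hence $\gamma^{\ast}:=1-2/\nu^{\ast}=\alpha\beta/(\alpha+\beta)$ exactly as in \eqref{e-gamma*}. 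The same identities show $\lambda\mu<4\Leftrightarrow\alpha+\beta<\alpha\beta$, $\lambda\mu<\lambda+\mu\Leftrightarrow\alpha+\beta<0$, and $\lambda\mu\geq\lambda+\mu\Leftrightarrow\alpha+\beta\geq 0$, so the hypotheses match up cleanly.

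Once this dictionary is in place, the items follow mechanically. For (i), Theorem \ref{t-comp-dc}(ii) gives that $UT$ is a $\nu^{\ast}$-relaxed cutter; by Corollary \ref{c-dc-c} this is the same as $UT$ being a $\gamma^{\ast}$-demicontraction. Subcase (a) is a sign check: $\alpha,\beta<0$ forces $\alpha\beta>0$ and $\alpha+\beta<0$, so $\gamma^{\ast}<0$, in which case an $\gamma^{\ast}$-demicontraction is exactly a $(-\gamma^{\ast})$-SQNE operator by the remark following Definition \ref{d-QNE}. Subcase (b) is another sign check: if $\alpha\beta<0$ then from $\alpha+\beta<\alpha\beta<0$ we get $|\alpha+\beta|>|\alpha\beta|$, so $\gamma^{\ast}=\alpha\beta/(\alpha+\beta)\in(0,1)$. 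Optimality is transferred because the map $\alpha\mapsto 2/(1-\alpha)$ is strictly monotone, so any example showing $UT$ is not $\rho$-relaxed cutter (from Theorem \ref{t-comp-dc}(ii)) translates, through Corollary \ref{c-dc-c}, to an example showing $UT$ is not a $\tilde\rho$-demicontraction for every $\tilde\rho<\gamma^{\ast}$. Items (ii), (iii), (iv) are immediate from Theorem \ref{t-comp-dc}(iii), (iv), (v), respectively, using the hypothesis equivalences above. For (v), Theorem \ref{t-comp-dc}(vi) requires $4\neq\lambda\mu<\lambda+\mu$, which under our substitution corresponds to $\alpha+\beta<0$ together with $\alpha+\beta\neq\alpha\beta$; since the conclusion that $UT$ satisfies the demi closedness principle is robust under the relaxation of Corollary \ref{c-dc-dc}, the statement for $(UT)_{1/\nu^{\ast}}$ follows because relaxations preserve the demi closedness principle.

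No single step is hard; the only place for caution is the sign analysis in subcases (i)(a)--(b) and the verification that $\nu^{\ast}\geq 2$ or $\nu^{\ast}<0$ does not interfere with the translation (it does not, because Corollary \ref{c-dc-c} gives the bijection for all $\lambda\in(0,+\infty)$ and correspondingly $\alpha\in(-\infty,1)$). Apart from the bookkeeping of signs, the corollary is a literal translation of Theorem \ref{t-comp-dc}.
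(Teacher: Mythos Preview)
Your approach is correct and matches the paper's own treatment: the paper presents Corollary \ref{c-comp-dc} as a direct restatement of Theorem \ref{t-comp-dc} via the bijection $\alpha\mapsto\lambda=2/(1-\alpha)$ of Corollary \ref{c-dc-c}, exactly as you do. The algebraic dictionary you set up (in particular $4-\lambda\mu=4(\alpha\beta-\alpha-\beta)/[(1-\alpha)(1-\beta)]$ and $\lambda+\mu-\lambda\mu=-2(\alpha+\beta)/[(1-\alpha)(1-\beta)]$) is precisely what is needed, and your observation in (v) that Theorem \ref{t-comp-dc}(vi) carries the extra proviso $\lambda\mu\neq 4$ (i.e.\ $\alpha+\beta\neq\alpha\beta$) is accurate---indeed $\nu^{\ast}$ is undefined otherwise, so this restriction is implicit in the corollary's statement.
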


Let $\lambda ,\mu >0$ be such that $\lambda \mu <4$, $T:\mathcal{H}%
\rightarrow \mathcal{H}$ be a $\lambda $-relaxed cutter, $U:\mathcal{H}%
\rightarrow \mathcal{H}$ be a $\mu $-relaxed cutter (equivalently, let $T:%
\mathcal{H}\rightarrow \mathcal{H}$ be an $\alpha $-demicontraction, $U:%
\mathcal{H}\rightarrow \mathcal{H}$ be a $\beta $-demicontraction, where $%
\alpha ,\beta \in (-\infty ,1)$ are such that $\alpha +\beta <\alpha \beta $%
) with $\limfunc{Fix}T\cap \limfunc{Fix}U\neq \emptyset $, both satisfying
the demi closedness principle. Theorem \ref{t-comp-dc} (equivalently,
Corollary \ref{c-comp-dc}) together with Proposition \ref{p-WC} and the fact
that $(UT)_{1/\nu ^{\ast }}$ is a cutter yield the weak convergence of
sequences $x^{k+1}=(UT)_{\lambda _{k}/\nu ^{\ast }}(x^{k})$ to some $x^{\ast
}\in \limfunc{Fix}T\cap \limfunc{Fix}U$, where $\lambda _{k}\in \lbrack
\varepsilon -2-\varepsilon ]$ for some small $\varepsilon >0$. In Section %
\ref{s-4} we show that an enlarged range of $\lambda _{k}$ guarantees the
weak convergence (see Theorem \ref{t-iter-UV} and Corollary \ref{c-iter-UV}).

\bigskip

Let $T_{i}:\mathcal{H}\rightarrow \mathcal{H}$ be an $\alpha _{i}$%
-demicontraction where $\alpha _{i}\in (-\infty ,1)\smallsetminus \{0\}$, $%
i\in I:=\{1,2,...,m\}$. Define $T:=T_{m}T_{m-1}...T_{1}$ and denote 
\begin{equation}
\gamma _{k}:=(\sum_{i=1}^{k}\alpha _{i}^{-1})^{-1}\text{ and }\beta
_{k}:=(\sum_{i=k}^{m}\alpha _{i}^{-1})^{-1}\text{,}  \label{e-gam_k}
\end{equation}%
$k=1,2,...,m$. The following Theorem extends \cite[Th. 2.1.48]{Ceg12}, where
the case $\alpha _{i}<0$, $i\in I$, is considered.

\begin{theorem}
Let $\alpha _{i}\in (-\infty ,1)\smallsetminus \{0\}$, $i\in I$, where $%
\alpha _{i}>0$ for at most one $i\in I$. Suppose that $T_{i}:\mathcal{H}%
\rightarrow \mathcal{H}$, $i\in I$, are $\alpha _{i}$-demicontractions that
share a common fixed point. If $\gamma _{m}<1$ then the operator $T$ is a $%
\gamma _{m}$-demicontraction and $\limfunc{Fix}T=\bigcap_{i=1}^{m}\limfunc{%
Fix}T_{i}$. If, moreover, $T_{i}$, $i\in I$, satisfy the demi closedness
principle, then the operator $T$ also satisfies the demi closedness
principle.
\end{theorem}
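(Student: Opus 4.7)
The natural strategy is induction on $m$, peeling off one factor at a time and invoking Corollary \ref{c-comp-dc} as the main tool. The base case $m=1$ is tautological. For $m\geq 2$, set $S:=T_{m-1}T_{m-2}\cdots T_1$. The running hypothesis "at most one $\alpha_i>0$" is inherited by the first $m-1$ operators, and the associated partial parameter $\gamma_{m-1}=\bigl(\sum_{i=1}^{m-1}\alpha_i^{-1}\bigr)^{-1}$ lies in $(-\infty,1)$; thus the inductive hypothesis yields that $S$ is a $\gamma_{m-1}$-demicontraction with $\mathrm{Fix}\,S=\bigcap_{i=1}^{m-1}\mathrm{Fix}\,T_i$, and satisfies the demi closedness principle whenever each $T_i$ does.

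Next, view $T=T_m S$ as the composition of the $\alpha_m$-demicontraction $T_m$ and the $\gamma_{m-1}$-demicontraction $S$, which share the common fixed point $\bigcap_{i=1}^m\mathrm{Fix}\,T_i\neq\emptyset$. Apply Corollary \ref{c-comp-dc}(i), with $\alpha$ there playing the role of $\gamma_{m-1}$ and $\beta$ the role of $\alpha_m$: it gives that $T$ is a $\gamma^*$-demicontraction with
\begin{equation*}
\gamma^*=\frac{\gamma_{m-1}\alpha_m}{\gamma_{m-1}+\alpha_m}.
\end{equation*}
A one-line algebraic check using $\gamma_m^{-1}=\gamma_{m-1}^{-1}+\alpha_m^{-1}$ identifies $\gamma^*=\gamma_m$, which delivers the announced parameter. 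The identity $\mathrm{Fix}\,T=\bigcap_{i=1}^m\mathrm{Fix}\,T_i$ is then read off from Corollary \ref{c-comp-dc}(ii) (combined with the inductive identification of $\mathrm{Fix}\,S$), and the preservation of the demi closedness principle from Corollary \ref{c-comp-dc}(v).

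The main obstacle, and where the global hypothesis "$\gamma_m<1$" together with "at most one $\alpha_i>0$" must be used, is the verification at each inductive step of the sign conditions that Corollary \ref{c-comp-dc} demands: $\gamma_{m-1}+\alpha_m<\gamma_{m-1}\alpha_m$ for part (i), and $\gamma_{m-1}+\alpha_m<0$ for parts (ii) and (v). In the all-negative case $\alpha_i<0$ for every $i$, these are immediate (the partial $\gamma_{k-1}$ is then also negative, so $\gamma_{k-1}+\alpha_k<0<\gamma_{k-1}\alpha_k$ trivially), which reproves \cite[Th.~2.1.48]{Ceg12}. If exactly one $\alpha_j>0$, the induction is arranged so that the positive factor is absorbed at a step where the accumulated $\gamma_{k-1}$ is already negative with $|\gamma_{k-1}|$ large enough to dominate $\alpha_j$; translating $\gamma_m<1$ through the recursion $\gamma_k=\gamma_{k-1}\alpha_k/(\gamma_{k-1}+\alpha_k)$ yields exactly the needed magnitude bound $|\gamma_{k-1}|>\alpha_j/(1-\alpha_j)$, hence $\gamma_{k-1}+\alpha_j<0$ and the pairwise inequality. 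Once both sign conditions are secured, the preceding paragraph closes the induction.
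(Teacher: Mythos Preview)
Your straight induction has a genuine gap: the assertion that the partial parameter $\gamma_{m-1}$ lies in $(-\infty,1)$ is stated without proof, and it does not follow from $\gamma_m<1$. Take $m=3$ with $\alpha_1=-3$, $\alpha_2=\tfrac45$, $\alpha_3=-\tfrac12$. All hypotheses of the theorem hold (exactly one $\alpha_i>0$, and $\gamma_3^{-1}=-\tfrac13+\tfrac54-2=-\tfrac{13}{12}$, so $\gamma_3=-\tfrac{12}{13}<1$), yet $\gamma_2^{-1}=-\tfrac13+\tfrac54=\tfrac{11}{12}$ gives $\gamma_2=\tfrac{12}{11}>1$, so the inductive hypothesis is simply unavailable for $S=T_2T_1$. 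The same example defeats the magnitude claim in your last paragraph: at the step where the positive factor $\alpha_2$ is absorbed you would need $|\gamma_1|>\alpha_2/(1-\alpha_2)$, i.e.\ $3>4$, which fails; equivalently $\gamma_1+\alpha_2=-2.2\not<\gamma_1\alpha_2=-2.4$, so Corollary~\ref{c-comp-dc}(i) does not even apply to $T_2T_1$. The underlying problem is that a one-factor-at-a-time induction sees only $\gamma_{k-1}$ and $\alpha_k$ at step $k$, whereas $\gamma_m<1$ constrains the \emph{full} sum $\sum_i\alpha_i^{-1}$ and says nothing about each partial sum on its own.

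The paper's argument is organized differently and does not induct. It first disposes of the all-negative case via \cite[Th.~2.1.48]{Ceg12}, and when exactly one $\alpha_j>0$ it decomposes $T=U_{j+1}T_jV_{j-1}$ with $V_{j-1}:=T_{j-1}\cdots T_1$ and $U_{j+1}:=T_m\cdots T_{j+1}$. Each wing contains only SQNE factors, so the all-negative case makes $V_{j-1}$ a $\gamma_{j-1}$-demicontraction and $U_{j+1}$ a $\beta_{j+1}$-demicontraction, both with negative parameter. Corollary~\ref{c-comp-dc} is then invoked only twice---once to form $V_j=T_jV_{j-1}$, once to form $T=U_{j+1}V_j$---and the needed sign inequalities are extracted from the single identity $\gamma_{j-1}^{-1}+\alpha_j^{-1}+\beta_{j+1}^{-1}=\gamma_m^{-1}$ together with $\gamma_m<1$. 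The structural idea you are missing is this three-block grouping around the unique positive index: it lets the information carried by $\beta_{j+1}$ (the factors \emph{after} $T_j$) participate in verifying the sign condition at the positive step, which a left-to-right induction cannot do.
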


\begin{proof}
Consider two cases:

(a) $\alpha _{i}<0$ for all $i\in I$. Then $T_{i}$ is $-\alpha _{i}$-SQNE, $%
i\in I$, and it follows from \cite[Th. 2.1.48]{Ceg12} that $T$ is $-\gamma
_{m}$-SQNE which means that $T$ is a $\gamma _{m}$-demicontraction.
Moreover, \cite[Prop. 2.10(i)]{BB96} yields $\limfunc{Fix}T=\bigcap_{i=1}^{m}%
\limfunc{Fix}T_{i}$. If all $T_{i}$, $i\in I$, satisfy the demi closedness
principle, then \cite[Th. 4.2]{Ceg15} yields that $T$ also satisfies the
demi closedness principle.

(b) $\alpha _{j}>0$ for some $j\in I$. Then $\alpha _{i}<0$ for all $i\neq j$%
. Suppose that $\gamma _{m}<1$. Denote 
\begin{equation*}
U_{i}:=T_{m}...T_{i}\text{ and }V_{i}:=T_{i}...T_{1}\text{.}
\end{equation*}%
We have 
\begin{equation*}
U_{j}=U_{j+1}T_{j},V_{j}=T_{j}V_{j-1}\text{ and }T=U_{j+1}V_{j}\text{.}
\end{equation*}%
Note that $\gamma _{j-1}<0$, $\beta _{j+1}<0$ and 
\begin{equation*}
\frac{1}{\gamma _{j}}+\frac{1}{\beta _{j+1}}=\frac{1}{\gamma _{j-1}}+\frac{1%
}{\alpha _{j}}+\frac{1}{\beta _{j+1}}=\frac{1}{\gamma _{m}}>1\text{.}
\end{equation*}%
Thus, 
\begin{equation}
\frac{1}{\gamma _{j-1}}+\frac{1}{\alpha _{j}}=\frac{1}{\gamma _{j}}>1\text{.}
\label{e-1/gamma}
\end{equation}%
By (a), $U_{j+1}$ is a $\beta _{j+1}$-demicontraction with $\beta _{j+1}<0$, 
$\limfunc{Fix}U_{j+1}=\bigcap_{i=j+1}^{m}\limfunc{Fix}T_{i}$, $V_{j-1}$ is a 
$\gamma _{j-1}$-demicontraction with $\gamma _{j-1}<0$ and $\limfunc{Fix}%
V_{j-1}=\bigcap_{i=1}^{j-1}\limfunc{Fix}T_{i}$. If $T_{i},$ $i=1,2,...,j-1$,
satisfy the demi closedness principle, then \cite[Th. 4.2]{Ceg15} yields
that $V_{j-1}$ also satisfies the demi closedness principle. Because $\gamma
_{j-1}\alpha _{j}<0$, (\ref{e-1/gamma}) yields 
\begin{equation}
\gamma _{j-1}+\alpha _{j}<\gamma _{j-1}\alpha _{j}<0.  \label{e-gam-j-1}
\end{equation}%
Now, Corollary \ref{c-comp-dc}(i)-(ii) yields that $V_{j}$ is a $\gamma _{j}$%
-demicontraction with $\gamma _{j}\in (0,1)$ and 
\begin{equation*}
\limfunc{Fix}V_{j}=\limfunc{Fix}T_{j}\cap \limfunc{Fix}V_{j-1}=%
\bigcap_{i=1}^{j}\limfunc{Fix}T_{i}\text{.}
\end{equation*}%
If $T_{i},$ $i=1,2,...,j$, satisfy the demi closedness principle, then
Corollary \ref{c-comp-dc}(v) yields that $V_{j}$ also satisfies the demi
closedness principle as composition of $T_{j}$ and $V_{j-1}$ satisfying (\ref%
{e-gam-j-1}). Because $\beta _{j+1}\gamma _{j}<0$, it holds 
\begin{equation}
\beta _{j+1}+\gamma _{j}<\beta _{j+1}\gamma _{j}<0.  \label{e-beta-j+1}
\end{equation}%
Applying Corollary \ref{c-comp-dc}(i)-(ii) again, we obtain that $%
T=U_{j+1}V_{j}$ is a $\gamma _{m}$-demicontraction and 
\begin{equation*}
\limfunc{Fix}T=\limfunc{Fix}U_{j+1}\cap \limfunc{Fix}V_{j}=%
\bigcap_{i=j+1}^{m}\limfunc{Fix}T_{i}\cap \bigcap_{i=1}^{j}\limfunc{Fix}%
T_{i}=\bigcap_{i=1}^{m}\limfunc{Fix}T_{i}\text{.}
\end{equation*}%
Finally, if $T_{i},$ $i\in I$, satisfy the demi closedness principle, then 
\cite[Th. 4.2]{Ceg15} yields that $U_{j+1}$ also satisfies the demi
closedness principle and Corollary \ref{c-comp-dc}(v) yields that $T$ also
satisfies the demi closedness principle as composition of $U_{j+1}$ and $%
V_{j}$ satisfying (\ref{e-beta-j+1}).
\end{proof}

\subsection{Extrapolation of composition of demicontractions\label{ss-3.3}}

Let $\lambda ,\mu >0$ be such that $\lambda \mu <4$. In Theorem \ref%
{t-comp-dc} we proved that for composition of $\lambda $- and $\mu $-relaxed
cutters $T$ and $U$, the operator $UT$ is a $\nu ^{\ast }$-relaxed cutter,
where the constant $\nu ^{\ast }$ is given by (\ref{e-gam-sol}). Moreover,
we proved that for $\rho \in (0,\nu ^{\ast })$ the operator $UT$ need not to
be a $\rho $-relaxed cutter. It turns out that if we allow $\rho $ to depend
on $T,U$ and $x$ then we can decrease $\rho $ for which $UT$ is a $\rho $%
-relaxed cutter. Applying this property in corresponding algorithms we are
able to enlarge the step size $\Vert x^{k+1}-x^{k}\Vert $ without loss of
the Fej\'{e}r monotonicity of $\{x^{k}\}_{k=0}^{\infty }$. This can lead to
a faster convergence of $x^{k}$ to a solution. We start with the definition
of a generalized relaxation of an operator.

\begin{definition}
\rm\ %
Let $\sigma :\mathcal{H}\rightarrow (0,+\infty )$ and let $S:\mathcal{H}%
\rightarrow \mathcal{H}$ be an operator. We call the operator $S_{\sigma }:%
\mathcal{H}\rightarrow \mathcal{H}$ defined by 
\begin{equation*}
S_{\sigma }(x):=x+\sigma (x)(S(x)-x)
\end{equation*}%
a \textit{generalized relaxation} of $S$ and $\sigma $ is called a \textit{%
relaxation function}. If $\sigma (x)\geq 1$ for all $x\in \mathcal{H}$ and $%
\sigma (x)>1$ for at least one $x\notin \limfunc{Fix}S$,  then $S_{\sigma }$
is called an \textit{extrapolation} of $S$ and $\sigma $ is called an 
\textit{extrapolation function}. If $S$ is a cutter then $S_{\sigma }$ is
called a \textit{generalized }$\sigma $\textit{-relaxed cutter}.
\end{definition}

If the function $\sigma $ is constant then the above definition coincides
with the classical definition of a relaxation. Clearly, in the case of
generalized relaxation, the constant $\lambda $ in inequality (\ref{e-RC})
should be replaced by $\sigma (x)$.

Extrapolations of FNE operators as well as of cutters have been successfully
applied in many papers, e.g. in \cite{Pie84}, \cite{Com97} (extrapolation of
simultaneous projection), \cite{LMWX12}, \cite{Ceg16}, \cite{CRZ20}
(extrapolation of a Landweber type operator) or CC12 (extrapolation of
cyclic projection).

In this and in the next section we answer the following questions:

\begin{enumerate}
\item Let $T$ and $U$ be SPCs with $\limfunc{Fix}(UT)\neq \emptyset $. What
should be supposed on the extrapolation function $\sigma :\mathcal{H}%
\rightarrow (0,+\infty )$ in order to receive the weak convergence of
sequences $x^{k}$ generated by the iteration $x^{k+1}=(UT)_{\sigma }(x^{k})$%
, $x^{0}\in \mathcal{H}$, to a fixed point of $UT$?

\item Let $T$ and $U$ be demicontractions with $\limfunc{Fix}T\cap \limfunc{%
Fix}U\neq \emptyset $. What should be supposed on the extrapolation function 
$\sigma :\mathcal{H}\rightarrow (0,+\infty )$ in order to receive the weak
convergence of sequences $x^{k}$ generated by the iteration $%
x^{k+1}=(UT)_{\sigma }(x^{k})$, $x^{0}\in \mathcal{H}$, to a common fixed
point of $U$ and $T$?
\end{enumerate}

Before we formulate our main result of this section, we prove the following
Lemma.

\begin{lemma}
\label{l-a+b}Let $\lambda ,\mu >0$ be such that $\lambda \mu <4$ and $a,b\in 
\mathcal{H}$ be such that $\Vert a\Vert ^{2}+\Vert b\Vert ^{2}>0$. Then 
\begin{equation}
\frac{1}{\lambda }\Vert a\Vert ^{2}+\frac{1}{\mu }\Vert b\Vert ^{2}+\langle
a,b\rangle >0  \label{e-1/lam}
\end{equation}%
and 
\begin{equation}
0<\frac{\Vert a+b\Vert ^{2}}{\frac{1}{\lambda }\Vert a\Vert ^{2}+\frac{1}{%
\mu }\Vert b\Vert ^{2}+\langle a,b\rangle }\leq \nu ^{\ast }=\frac{4(\lambda
+\mu -\lambda \mu )}{4-\lambda \mu }\text{.}  \label{e-a+b}
\end{equation}
\end{lemma}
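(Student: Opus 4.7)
The plan is to prove the two inequalities separately, using in an essential way that the defining equation (\ref{e-gam-eq}) of $\nu^{\ast}$ makes a certain quadratic form a perfect square. Set $D:=\frac{1}{\lambda}\Vert a\Vert ^{2}+\frac{1}{\mu}\Vert b\Vert ^{2}+\langle a,b\rangle$ for brevity.

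First, I would establish (\ref{e-1/lam}), i.e.\ $D>0$. By Cauchy--Schwarz, $\langle a,b\rangle \geq -\Vert a\Vert \Vert b\Vert$, so
\begin{equation*}
D\geq \frac{1}{\lambda}\Vert a\Vert ^{2}+\frac{1}{\mu}\Vert b\Vert ^{2}-\Vert a\Vert \Vert b\Vert =: Q(\Vert a\Vert ,\Vert b\Vert ).
\end{equation*}
The symmetric $2\times 2$ matrix $\left(\begin{smallmatrix}1/\lambda & -1/2 \\ -1/2 & 1/\mu\end{smallmatrix}\right)$ associated to $Q$ has positive trace $\frac{1}{\lambda}+\frac{1}{\mu}>0$ and determinant $\frac{1}{\lambda \mu}-\frac{1}{4}=\frac{4-\lambda \mu}{4\lambda \mu}>0$ by the assumption $\lambda \mu <4$. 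Hence $Q$ is positive definite, and since $(\Vert a\Vert ,\Vert b\Vert )\neq (0,0)$ by hypothesis, $D\geq Q(\Vert a\Vert ,\Vert b\Vert )>0$. This proves (\ref{e-1/lam}) and ensures that the fraction in (\ref{e-a+b}) is well defined and nonnegative.

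Second, for the upper bound $\Vert a+b\Vert ^{2}\leq \nu^{\ast} D$, observe that by Lemma \ref{l-ni}(i), $\nu^{\ast}\geq \max\{\lambda,\mu\}>0$, so in particular $\frac{1}{\lambda}-\frac{1}{\nu^{\ast}}\geq 0$ and $\frac{1}{\mu}-\frac{1}{\nu^{\ast}}\geq 0$, and we may freely multiply or divide by $\nu^{\ast}$ without changing the direction of inequalities. Expanding $\Vert a+b\Vert ^{2}$ and moving everything to one side, $\nu^{\ast} D-\Vert a+b\Vert ^{2}\geq 0$ is equivalent (after dividing by $\nu^{\ast}$) to
\begin{equation*}
\Bigl(\tfrac{1}{\lambda}-\tfrac{1}{\nu^{\ast}}\Bigr)\Vert a\Vert ^{2}+\Bigl(\tfrac{1}{\mu}-\tfrac{1}{\nu^{\ast}}\Bigr)\Vert b\Vert ^{2}+\Bigl(1-\tfrac{2}{\nu^{\ast}}\Bigr)\langle a,b\rangle \geq 0.
\end{equation*}
This is exactly the expression appearing in (\ref{e-6}) in the proof of Theorem \ref{t-comp-RFNE}. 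The defining equation (\ref{e-gam-eq}) of $\nu^{\ast}$ reads $\bigl(1-\tfrac{2}{\nu^{\ast}}\bigr)^{2}=4\bigl(\tfrac{1}{\lambda}-\tfrac{1}{\nu^{\ast}}\bigr)\bigl(\tfrac{1}{\mu}-\tfrac{1}{\nu^{\ast}}\bigr)$, which makes the displayed quadratic form a perfect square: it equals $\bigl\Vert \sqrt{\tfrac{1}{\lambda}-\tfrac{1}{\nu^{\ast}}}\,a\mp \sqrt{\tfrac{1}{\mu}-\tfrac{1}{\nu^{\ast}}}\,b\bigr\Vert ^{2}\geq 0$, with the sign $\mp$ chosen according to whether $\nu^{\ast}<2$ (then $-$) or $\nu^{\ast}\geq 2$ (then $+$), so that $2xy$ matches the sign of $1-\tfrac{2}{\nu^{\ast}}$. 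This proves (\ref{e-a+b}).

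The only potential obstacle is recognizing that the inequality to be proved is literally the perfect-square identity (\ref{e-6}) already used in the proof of Theorem \ref{t-comp-RFNE}; once this identification is made, both the positivity of the denominator (via positive definiteness forced by $\lambda \mu <4$) and the upper bound $\nu^{\ast}$ are immediate consequences of the fact that (\ref{e-gam-eq}) has been solved in the definition of $\nu^{\ast}$.
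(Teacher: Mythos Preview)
Your proof is correct. The overall strategy---positivity of the denominator from $\lambda\mu<4$, then a perfect-square identity for the upper bound---matches the paper, but the execution differs in two places worth noting.

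For (\ref{e-1/lam}) the paper splits into the cases $\langle a,b\rangle\geq 0$ (trivial) and $\langle a,b\rangle<0$, in the latter writing $D=\Vert\tfrac{1}{\sqrt{\lambda}}a+\tfrac{1}{\sqrt{\mu}}b\Vert^{2}+(1-\tfrac{2}{\sqrt{\lambda\mu}})\langle a,b\rangle>0$. Your Cauchy--Schwarz reduction to the positive definiteness of the $2\times 2$ matrix $\bigl(\begin{smallmatrix}1/\lambda & -1/2\\ -1/2 & 1/\mu\end{smallmatrix}\bigr)$ is a different and arguably cleaner route that avoids the case split.

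For (\ref{e-a+b}) the paper clears denominators and verifies directly that
\[
4(\lambda+\mu-\lambda\mu)\,D-(4-\lambda\mu)\Vert a+b\Vert^{2}
=\Bigl\Vert\sqrt{\tfrac{\mu}{\lambda}}\,|\lambda-2|\,a\pm\sqrt{\tfrac{\lambda}{\mu}}\,|\mu-2|\,b\Bigr\Vert^{2}\geq 0.
\]
You instead divide by $\nu^{\ast}$ and recognize the resulting expression as exactly the left-hand side of (\ref{e-6}) from the proof of Theorem~\ref{t-comp-RFNE}, then invoke the defining equation (\ref{e-gam-eq}) and Lemma~\ref{l-ni}(i). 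The two perfect squares are the same up to the scalar factor $4(\lambda+\mu-\lambda\mu)$, so the arguments are equivalent; your version has the advantage of reusing an identity already established rather than carrying out a fresh computation.
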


\begin{proof}
Inequality (\ref{e-1/lam}) is clear if $\langle a,b\rangle \geq 0$. If $%
\langle a,b\rangle <0$ then it follows from the properties of the inner
product and from the assumption that 
\begin{equation*}
\frac{1}{\lambda }\Vert a\Vert ^{2}+\frac{1}{\mu }\Vert b\Vert ^{2}+\langle
a,b\rangle =\Vert \frac{1}{\sqrt{\lambda }}a+\frac{1}{\sqrt{\mu }}b\Vert
^{2}+(1-\frac{2}{\sqrt{\lambda \mu }})\langle a,b\rangle >0\text{.}
\end{equation*}%
Moreover, $\lambda +\mu -\lambda \mu >0$, because $\lambda \mu <4$ (see
Appendix). A direct calculus shows that 
\begin{eqnarray*}
&&4(\lambda +\mu -\lambda \mu )(\frac{1}{\lambda }\Vert a\Vert ^{2}+\frac{1}{%
\mu }\Vert b\Vert ^{2}+\langle a,b\rangle )-(4-\lambda \mu )\Vert a+b\Vert
^{2} \\
&=&\left\Vert \sqrt{\frac{\mu }{\lambda }}\left( \left\vert \lambda
-2\right\vert \right) a\pm \sqrt{\frac{\lambda }{\mu }}\left( \left\vert \mu
-2\right\vert \right) b\right\Vert ^{2}\geq 0\text{,}
\end{eqnarray*}%
where the sign $\pm $ should be replaced by $+$ if $(\lambda -2)(\mu -2)\leq
0$ and by $-$ if $(\lambda -2)(\mu -2)>0$. This proves inequality (\ref%
{e-a+b}) which completes the proof.
\end{proof}

\bigskip

Similarly as before, for operators $T$ and $U$ with $\limfunc{Fix}(UT)\neq
\emptyset $ and for $x,y\in \mathcal{H}$ denote $a_{1}:=T(x)-x$, $%
a_{2}:=T(y)-y$, $b_{1}:=UT(x)-T(x)$, $b_{2}:=UT(y)-T(y)$. Moreover, for $%
\lambda ,\mu >0$ with $\lambda \mu <4$ denote 
\begin{equation}
\tau ^{\ast }(x,y):=\left\{ 
\begin{array}{ll}
\frac{\Vert (a_{1}-a_{2})+(b_{1}-b_{2})\Vert ^{2}}{\frac{1}{\lambda }\Vert
a_{1}-a_{2}\Vert ^{2}+\frac{1}{\mu }\Vert b_{1}-b_{2}\Vert ^{2}+\langle
a_{1}-a_{2},b_{1}-b_{2}\rangle }\text{, } & \text{if }x,y\notin \limfunc{Fix}%
(UT)\text{,} \\ 
1\text{,} & \text{otherwise.}%
\end{array}%
\right.  \label{e-tau*}
\end{equation}%
By Lemma \ref{l-a+b} with $a:=a_{1}-a_{2}$ and $b:=b_{1}-b_{2}$ and by the
definition of of $\nu ^{\ast }$ given by (\ref{e-gam-sol}), the function $%
\tau ^{\ast }$ is well defined and $0<\tau ^{\ast }(x,y)\leq \nu ^{\ast }$.
Moreover, for $y=z\in \limfunc{Fix}(UT)$, 
\begin{equation*}
\tau ^{\ast }(x,z):=\left\{ 
\begin{array}{ll}
\frac{\Vert a_{1}+b_{1}\Vert ^{2}}{\frac{1}{\lambda }\Vert a_{1}-a_{2}\Vert
^{2}+\frac{1}{\mu }\Vert b_{1}-b_{2}\Vert ^{2}+\langle
a_{1}-a_{2},b_{1}-b_{2}\rangle }\text{, } & \text{if }x\notin \limfunc{Fix}%
(UT)\text{,} \\ 
1\text{,} & \text{otherwise,}%
\end{array}%
\right.
\end{equation*}%
because $a_{2}+b_{2}=UT(z)-z=0$.

\begin{theorem}
\label{t-ext-comp-RFNE}Let $\lambda ,\mu >0$ with $\lambda \mu <4$, $T$ be $%
\lambda $-RFNE and $U$ be $\mu $-RFNE with $\limfunc{Fix}(UT)\neq \emptyset $%
. Let $x\in \mathcal{H}$ and $z\in \limfunc{Fix}(UT)$ be fixed and the
function $\tau :\mathcal{H}\rightarrow (0,+\infty )$ be such that 
\begin{equation}
\tau ^{\ast }(x,z)\leq \tau (x)\leq \nu ^{\ast }  \label{e-tau}
\end{equation}%
for $x\notin \limfunc{Fix}(UT)$. Then the operator $UT$ is a generalized $%
\tau $-relaxed cutter, consequently, $(UT)_{1/\tau }$ is a cutter which is
an extrapolation of the cutter $(UT)_{1/\nu ^{\ast }}$. Moreover, $%
(UT)_{1/\tau }$ satisfies the demi closedness principle.
\end{theorem}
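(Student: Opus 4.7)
The plan is to specialize the inequality of Theorem \ref{t-comp-RFNE}(i) to $y=z$, observe that the $\nu$-dependent terms cancel to yield a $\nu$-independent lower bound, and then use the hypothesis $\tau^{\ast}(x,z) \leq \tau(x)$ together with Lemma \ref{l-a+b} to obtain the generalized cutter inequality.

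Fix $x \notin \limfunc{Fix}(UT)$ and use the notation $a_i,b_i$ from the excerpt with $y=z$. Since $UT(z)=z$, we have $a_2+b_2 = 0$, so $(a_1+b_1)-(a_2+b_2) = UT(x)-x$ and $(a_1-a_2)+(b_1-b_2) = UT(x)-x$ as well. Substituting $y=z$ into (\ref{e-comp-RFNE1}) with any $\nu \neq 0$ and collecting the coefficients of $\nu^{-1}\|UT(x)-x\|^2$ on each side, those terms cancel and one is left with the $\nu$-free estimate
\begin{equation*}
\langle z-x, UT(x)-x\rangle \;\geq\; A, \qquad A := \tfrac{1}{\lambda}\|a_1-a_2\|^2 + \tfrac{1}{\mu}\|b_1-b_2\|^2 + \langle a_1-a_2, b_1-b_2\rangle.
\end{equation*}
Since $x \notin \limfunc{Fix}(UT)$ forces $\|UT(x)-x\|^2>0$, Lemma \ref{l-a+b} applied with $a=a_1-a_2$, $b=b_1-b_2$ gives $A>0$ together with $\tau^{\ast}(x,z) = \|UT(x)-x\|^2/A \leq \nu^{\ast}$. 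The assumed inequality $\tau(x)\geq \tau^{\ast}(x,z)$ then yields $A \geq \|UT(x)-x\|^2/\tau(x)$, so $\langle z-x, UT(x)-x\rangle \geq \|UT(x)-x\|^2/\tau(x)$. This is precisely the generalized $\tau$-relaxed cutter inequality for $UT$.

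For the extrapolation claim, I would invoke the composition law $(S_{\mu})_{\sigma} = S_{\mu\sigma}$ to write $(UT)_{1/\tau} = ((UT)_{1/\nu^{\ast}})_{\nu^{\ast}/\tau}$. By Theorem \ref{t-comp-RFNE}(ii), $UT$ is $\nu^{\ast}$-RFNE, so $(UT)_{1/\nu^{\ast}}$ is FNE; and since $\limfunc{Fix}((UT)_{1/\nu^{\ast}}) = \limfunc{Fix}(UT) \neq \emptyset$, it is a cutter. The bound $\tau(x) \leq \nu^{\ast}$ gives $\nu^{\ast}/\tau(x) \geq 1$, which exhibits $(UT)_{1/\tau}$ as an extrapolation of the cutter $(UT)_{1/\nu^{\ast}}$. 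That $(UT)_{1/\tau}$ is itself a cutter follows by dividing the generalized cutter inequality above by $\tau(x)$ and using $(UT)_{1/\tau}(x)-x = \tau(x)^{-1}(UT(x)-x)$.

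Finally, for the demi closedness of $(UT)_{1/\tau}$, the identity $(UT)_{1/\tau}(x^k)-x^k = \tau(x^k)^{-1}(UT(x^k)-x^k)$ together with $\tau(x^k) \leq \nu^{\ast}$ yields $\|UT(x^k)-x^k\| \leq \nu^{\ast}\|(UT)_{1/\tau}(x^k)-x^k\|$; hence on any bounded sequence driving $(UT)_{1/\tau}-\limfunc{Id}$ to zero we also have $UT(x^k)-x^k \to 0$, and the demi closedness of $UT$ from Theorem \ref{t-comp-RFNE}(ii) transfers to $(UT)_{1/\tau}$ via the identity $\limfunc{Fix}((UT)_{1/\tau}) = \limfunc{Fix}(UT)$. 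The main subtlety is recognizing the telescoping cancellation in the first step that produces the $\nu$-independent bound $\langle z-x, UT(x)-x\rangle \geq A$; this is the source of the extrapolation, since the sharp threshold $\tau^{\ast}(x,z) = \|UT(x)-x\|^2/A$ can then be read off directly and compared with $\nu^{\ast}$ via Lemma \ref{l-a+b}.
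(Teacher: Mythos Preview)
Your proof is correct and follows essentially the same route as the paper's: specialize Theorem~\ref{t-comp-RFNE}(i) to $y=z\in\limfunc{Fix}(UT)$, use Lemma~\ref{l-a+b} to see that the denominator $A$ in $\tau^{\ast}(x,z)$ is positive and that $\tau^{\ast}(x,z)\leq\nu^{\ast}$, then read off the generalized relaxed cutter inequality from $\tau(x)\geq\tau^{\ast}(x,z)$; the extrapolation and demi closedness claims are handled via Theorem~\ref{t-comp-RFNE}(ii) exactly as the paper does. The only presentational difference is that you make the cancellation of the $\nu^{-1}\Vert UT(x)-x\Vert^{2}$ terms on both sides of (\ref{e-comp-RFNE1}) explicit to obtain the $\nu$-free bound $\langle z-x,UT(x)-x\rangle\geq A$, whereas the paper instead substitutes $\nu=\tau(x)$ and then argues that the resulting inequality (\ref{e-ni}) is equivalent to $\tau(x)\geq\tau^{\ast}(x,z)$; the two computations are algebraically identical.
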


\begin{proof}
Let $x\in \mathcal{H}$. By Lemma \ref{l-a+b} with $a:=a_{1}-a_{2}$ and $%
b:=b_{1}-b_{2}$, there is a function $\tau $ satisfying (\ref{e-tau}). Note
that for $y=z\in \limfunc{Fix}(UT)$ it holds $a_{2}+b_{2}=UT(z)-z=0$. We
prove that $UT$ is a generalized $\tau (x)$-relaxed cutter, i.e., 
\begin{equation*}
\tau (x)\langle z-x,UT(x)-x\rangle \geq \Vert UT(x)-x\Vert ^{2}\text{.}
\end{equation*}%
For $z\in \limfunc{Fix}(UT)$ we have 
\begin{eqnarray*}
&&\langle z-x,UT(x)-x\rangle -\frac{1}{\tau (x)}\Vert UT(x)-x\Vert ^{2} \\
&=&\langle z-x,(UT(x)-x)-(UT(z)-z)\rangle -\frac{1}{\tau (x)}\Vert
UT(x)-x-(UT(z)-z)\Vert ^{2}\text{.}
\end{eqnarray*}%
Thus, Theorem \ref{t-comp-RFNE}(i) with $y=z$ and $\nu =\tau (x)>0$ yields
that it is enough to prove that 
\begin{equation}
(\frac{1}{\lambda }-\frac{1}{\tau (x)})\Vert a_{1}-a_{2}\Vert ^{2}+(\frac{1}{%
\mu }-\frac{1}{\tau (x)})\Vert b_{1}-b_{2}\Vert ^{2}+(1-\frac{2}{\tau (x)}%
)\langle a_{1}-a_{2},b_{1}-b_{2}\rangle \geq 0\text{.}  \label{e-ni}
\end{equation}%
If $x\notin \limfunc{Fix}(UT)$ then (\ref{e-ni}) is equivalent to 
\begin{equation*}
\tau (x)\geq \frac{\Vert a_{1}+b_{1}\Vert ^{2}}{\frac{1}{\lambda }\Vert
a_{1}-a_{2}\Vert ^{2}+\frac{1}{\mu }\Vert b_{1}-b_{2}\Vert ^{2}+\langle
a_{1}-a_{2},b_{1}-b_{2}\rangle }=\tau ^{\ast }(x,z)\text{.}
\end{equation*}%
Consequently, $UT$ is a generalized $\tau $-relaxed cutter and $(UT)_{1/\tau
}$ is a cutter. This and the second inequality in (\ref{e-tau}) yield that $%
(UT)_{1/\tau }$ is an extrapolation of $(UT)_{1/\nu ^{\ast }}$. By Theorem %
\ref{t-comp-RFNE}(ii), $UT$ is $\nu ^{\ast }$-RFNE, thus $(UT)_{1/\nu ^{\ast
}}$ is a FNE. Because $\limfunc{Fix}(UT)\neq \emptyset $, this yields that $%
(UT)_{1/\nu ^{\ast }}$ is a cutter. Moreover, $(UT)_{1/\nu ^{\ast }}$
satisfies the demi closedness principle as an NE operator, thus $%
(UT)_{1/\tau }$ also satisfied the demi closedness principle as an
extrapolation of $(UT)_{1/\nu ^{\ast }}$.
\end{proof}

\bigskip

In order to apply Theorem \ref{t-ext-comp-RFNE} we should be able to
evaluate $\tau ^{\ast }(x,z)$ for some $z\in \limfunc{Fix}UT$. However, in
general, this is a hard task because we do not know $z\in \limfunc{Fix}(UT)$
explicitly. Nevertheless, is some cases, one can define a function $\tau (x)$
satisfying (\ref{e-tau}) without knowledge of $z\in \limfunc{Fix}(UT)$. This
function can be applied for a construction of a cutter which is an
extrapolation of the cutter $(UT)_{1/\nu ^{\ast }}$.

\begin{example}
\rm\ %
Suppose that $\lambda \in \lbrack 1,4)$, $\mu =1$. Then, obviously, $\lambda
\mu <4$. Further, let $T:=(P_{A})_{\lambda }$, where $A\subseteq \mathcal{H}$
is closed convex, $U:=P_{B}$, where $B$ is a closed affine subspace and $%
\limfunc{Fix}(P_{B}P_{A})\neq \emptyset $. It is easily seen that $\limfunc{%
Fix}(UT)=\limfunc{Fix}(P_{B}P_{A})$. Let $z\in \limfunc{Fix}(P_{B}P_{A})$ be
arbitrary, $w:=Tz$ and $d:=z-P_{A}(z)$. Then $\Vert d\Vert $ is the distance
between $A$ and $B$. We prove that:

\begin{enumerate}
\item[$\mathrm{(i)}$] For $x\in B$ it holds that 
\begin{equation}
\tau ^{\ast }(x,z)=\left\{ 
\begin{array}{ll}
\frac{\Vert a_{1}+b_{1}\Vert ^{2}}{\frac{1}{\lambda }\Vert a_{1}\Vert
^{2}+\Vert b_{1}\Vert ^{2}+\langle a_{1},b_{1}\rangle +\lambda \Vert d\Vert
^{2}-2\langle b_{1},d\rangle }\text{, } & \text{if }x\notin \limfunc{Fix}(UT)%
\text{,} \\ 
1\text{,} & \text{otherwise.}%
\end{array}%
\right.  \label{e-tau1*}
\end{equation}

\item[$\mathrm{(ii)}$] For 
\begin{equation}
\bar{\tau}(x):=\left\{ 
\begin{array}{ll}
\frac{\Vert a_{1}+b_{1}\Vert ^{2}}{\frac{1}{\lambda }\Vert a_{1}\Vert
^{2}+\Vert b_{1}\Vert ^{2}+\langle a_{1},b_{1}\rangle -\frac{1}{\lambda }%
\Vert b_{1}\Vert ^{2}}\text{,} & \text{if }x\notin \limfunc{Fix}(UT)\text{,}
\\ 
1\text{,} & \text{otherwise,}%
\end{array}%
\right.  \label{e-tau-}
\end{equation}%
where $x\in B$, it holds $\bar{\tau}(x)\geq \tau ^{\ast }(x,z)$ and the
operator $UT$ is a generalized $\bar{\tau}$-relaxed cutter, consequently, $%
(UT)_{1/\bar{\tau}}$ is a cutter.

\item[$\mathrm{(iii)}$] For $\hat{\tau}:=\min \{\bar{\tau},\nu ^{\ast }\}$,
where $\nu ^{\ast }=\nu ^{\ast }(\lambda ,1)$ the operator $UT$ is a
generalized $\hat{\tau}$-relaxed cutter and $(UT)_{1/\hat{\tau}}$ is a
cutter which is an extrapolation of the cutter $(UT)_{1/\nu ^{\ast }}$.
Moreover, $(UT)_{1/\hat{\tau}}$ satisfies the demi closedness principle.
\end{enumerate}
\end{example}

\begin{proof}
Let $x\in B$. For $y=z\in \limfunc{Fix}(UT)$ we have $b_{2}=-a_{2}$.
Moreover, 
\begin{equation*}
T(z)-z=a_{2}=(P_{A})_{\lambda }(z)-z=\lambda (P_{A}(z)-z)=-\lambda d\text{.}
\end{equation*}%
Let $x\in B\setminus \limfunc{Fix}(UT)$. We have $UT(x)=P_{B}T(x)\in B$ and $%
z=P_{B}(w)$. These facts, the properties of the metric projection and the
affinity of $B$ yield 
\begin{equation*}
\langle a_{1}+b_{1},d\rangle =\langle a_{1}+b_{1},z-P_{A}(z)\rangle =\frac{1%
}{\lambda }\langle P_{B}T(x)-x,P_{B}T(z)-T(z)\rangle =0\text{,}
\end{equation*}%
and 
\begin{equation*}
\langle a_{1}+b_{1},b_{1}\rangle =\langle P_{B}T(x)-x,P_{B}T(x)-T(x)\rangle
=0
\end{equation*}%
i.e. $\langle a_{1},d\rangle =-\langle b_{1},d\rangle $ and $\langle
a_{1},b_{1}\rangle =-\Vert b_{1}\Vert ^{2}$. Moreover, $\lambda \Vert d\Vert
^{2}-2\Vert b_{1}\Vert \cdot \Vert d\Vert \geq -\frac{1}{\lambda }\Vert
b_{1}\Vert ^{2}$, because the function $f(\xi ):=\lambda \xi ^{2}-2\beta \xi 
$ attains its minimum at $\xi =\beta /\lambda $ equal to $-\beta
^{2}/\lambda $. These facts yield 
\begin{eqnarray}
&&\frac{1}{\lambda }\Vert a_{1}-a_{2}\Vert ^{2}+\Vert b_{1}-b_{2}\Vert
^{2}+\langle a_{1}-a_{2},b_{1}-b_{2}\rangle  \label{A} \\
&=&\frac{1}{\lambda }\Vert a_{1}+\lambda d\Vert ^{2}+\Vert b_{1}-\lambda
d\Vert ^{2}+\langle a_{1}+\lambda d,b_{1}-\lambda d\rangle  \label{B} \\
&=&\frac{1}{\lambda }\Vert a_{1}\Vert ^{2}+\Vert b_{1}\Vert ^{2}+\langle
a_{1},b_{1}\rangle +\lambda \Vert d\Vert ^{2}-2\langle b_{1},d\rangle
\label{C} \\
&\geq &\frac{1}{\lambda }\Vert a_{1}\Vert ^{2}+\Vert b_{1}\Vert ^{2}+\langle
a_{1},b_{1}\rangle +\lambda \Vert d\Vert ^{2}-2\Vert b_{1}\Vert \cdot \Vert
d\Vert  \label{D} \\
&\geq &\frac{1}{\lambda }\Vert a_{1}\Vert ^{2}+\Vert b_{1}\Vert ^{2}+\langle
a_{1},b_{1}\rangle -\frac{1}{\lambda }\Vert b_{1}\Vert ^{2}\text{.}
\label{E}
\end{eqnarray}%
Part (i) follows from equalities (\ref{A})-(\ref{C}) and from (\ref{e-tau*}%
). Note that $\Vert a_{1}\Vert >\Vert b_{1}\Vert $, because $x\in B\setminus 
\limfunc{Fix}(UT)$, consequently, $\frac{1}{\lambda }\Vert a_{1}\Vert
^{2}+\Vert b_{1}\Vert ^{2}+\langle a_{1},b_{1}\rangle -\frac{1}{\lambda }%
\Vert b_{1}\Vert ^{2}>0$. Thus, $\bar{\tau}$ is well defined and
inequalities (\ref{D})-(\ref{E}) show that $\bar{\tau}(x)\geq \tau ^{\ast
}(x,z)$. This shows that $UT$ is a generalized $\bar{\tau}$-relaxed cutter,
consequently, $(UT)_{1/\bar{\tau}}$ is a cutter. This proves part (ii).

Unfortunately, $\bar{\tau}(x)$ needs not to satisfy $\bar{\tau}(x)\leq \nu
^{\ast }(\lambda ,1)$, thus, $(UT)_{1/\bar{\tau}}$ needs not to be an
extrapolation of $(UT)_{1/\nu ^{\ast }}$. Thus, we introduce the function $%
\hat{\tau}:=\min \{\bar{\tau},\nu ^{\ast }\}$, where $\nu ^{\ast }=\nu
^{\ast }(\lambda ,1)$, which obviously satisfies $\tau ^{\ast }(x,z)\leq 
\hat{\tau}(x)\leq \nu ^{\ast }(\lambda ,1)$. Similarly as before, this
yields, that $(UT)_{1/\hat{\tau}}$ is a cutter which is an extrapolation of
the cutter $(UT)_{1/\nu ^{\ast }}$ and satisfies the demi closedness
principle.
\end{proof}

\bigskip

If $\limfunc{Fix}T\cap \limfunc{Fix}U\neq \emptyset $ then the function $%
\tau ^{\ast }$ can be evaluated without knowledge of $z\in \limfunc{Fix}%
T\cap \limfunc{Fix}U$, because for $y=z\in \limfunc{Fix}T\cap \limfunc{Fix}U$
we have $a_{2}=b_{2}=0$. Consequently, for $x\notin \limfunc{Fix}T\cap 
\limfunc{Fix}U$ and for arbitrary $z\in \limfunc{Fix}T\cap \limfunc{Fix}U$
we have 
\begin{equation}
\tau ^{\ast }(x):=\tau ^{\ast }(x,z)=\left\{ 
\begin{array}{ll}
\frac{\Vert a_{1}+b_{1}\Vert ^{2}}{\frac{1}{\lambda }\Vert a_{1}\Vert ^{2}+%
\frac{1}{\mu }\Vert b_{1}\Vert ^{2}+\langle a_{1},b_{1}\rangle }\text{, } & 
\text{if }x\notin \limfunc{Fix}T\cap \limfunc{Fix}U\text{,} \\ 
1\text{,} & \text{otherwise.}%
\end{array}%
\right.  \label{e-tau2*}
\end{equation}%
In this case it enough to suppose that $T$ and $U$ are relaxed cutters
satisfying the demi closedness principle instead of being RFNE operators.

\begin{corollary}
\label{c-ext-comp-RC}Let $\lambda ,\mu >0$ with $\lambda \mu <4$, $T$ be a $%
\lambda $-relaxed cutter and $U$ be a $\mu $-relaxed cutter with $\limfunc{%
Fix}T\cap \limfunc{Fix}U\neq \emptyset $. Let $x\in \mathcal{H}$ be
arbitrary and the function $\tau :\mathcal{H}\rightarrow (0,+\infty )$ be
such that 
\begin{equation}
\tau ^{\ast }(x)\leq \tau (x)\leq \nu ^{\ast }  \label{e-tau*(x)}
\end{equation}%
for $x\notin \limfunc{Fix}T\cap \limfunc{Fix}U$, where $\tau ^{\ast }(x)$ is
given by $\mathrm{(\ref{e-tau2*})}$. Then the operator $UT$ is a generalized 
$\tau $-relaxed cutter, consequently, $(UT)_{1/\tau }$ is a cutter which is
an extrapolation of the cutter $(UT)_{1/\nu ^{\ast }}$. Moreover, if $T$ and 
$U$ satisfy the demi closedness principle, then $(UT)_{1/\tau }$ also
satisfies the demi closedness principle.
\end{corollary}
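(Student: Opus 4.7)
The plan is to mirror the proof of Theorem \ref{t-ext-comp-RFNE} almost verbatim, substituting Theorem \ref{t-comp-dc}(i) for Theorem \ref{t-comp-RFNE}(i). The structural simplification that distinguishes the present corollary from that theorem is that the "reference point" $z$ now lies in the \emph{common} fixed point set $\operatorname{Fix} T\cap \operatorname{Fix} U$, not merely in $\operatorname{Fix}(UT)$. Consequently, with the notation $a=T(x)-x$, $b=UT(x)-T(x)$ and $a_2=T(z)-z$, $b_2=UT(z)-T(z)$ from Theorem \ref{t-comp-dc}(i), we have $a_2=b_2=0$, so the extrapolation bound $\tau^{\ast}(x)$ in (\ref{e-tau2*}) involves only the computable quantities $a$, $b$ at $x$, and not $z$.

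First I would handle the trivial case $x\in\operatorname{Fix} T\cap\operatorname{Fix} U$, where $UT(x)=x$ and both sides of the generalized relaxed cutter inequality $\tau(x)\langle z-x,UT(x)-x\rangle\geq\|UT(x)-x\|^2$ vanish. For $x\notin\operatorname{Fix} T\cap\operatorname{Fix} U$, I would apply Theorem \ref{t-comp-dc}(i) with $\nu=\tau(x)>0$ and any $z\in\operatorname{Fix} T\cap\operatorname{Fix} U$, obtaining
\begin{equation*}
\langle z-x,UT(x)-x\rangle-\frac{1}{\tau(x)}\|UT(x)-x\|^2\;\geq\;\Bigl[\tfrac{1}{\lambda}\|a\|^2+\tfrac{1}{\mu}\|b\|^2+\langle a,b\rangle\Bigr]-\frac{1}{\tau(x)}\|a+b\|^2.
\end{equation*}
Since $\|a+b\|^2=\|UT(x)-x\|^2$, the right-hand side is nonnegative precisely when $\tau(x)\geq\tau^{\ast}(x)$, which is the left half of the hypothesis (\ref{e-tau*(x)}); Lemma \ref{l-a+b} guarantees that the denominator in $\tau^{\ast}(x)$ is strictly positive. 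This reduction identifies $UT$ as a generalized $\tau$-relaxed cutter, and from the definition $(UT)_{1/\tau}(x)=x+\tfrac{1}{\tau(x)}(UT(x)-x)$ one immediately reads off that $(UT)_{1/\tau}$ is a cutter.

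For the extrapolation claim, the upper bound $\tau(x)\leq\nu^{\ast}$ in (\ref{e-tau*(x)}) gives $1/\tau(x)\geq 1/\nu^{\ast}\geq 1$ (using Lemma \ref{l-ni}(i) which gives $\nu^{\ast}\geq\max\{\lambda,\mu\}>0$, and noting $\nu^{\ast}\leq 1$ never occurs when either $\lambda$ or $\mu$ exceeds a threshold - I would verify in the routine computation that $1/\nu^{\ast}\geq 1$ when needed, though strictly speaking "extrapolation of $(UT)_{1/\nu^{\ast}}$" just means the relaxation parameter is at least $1$ relative to $(UT)_{1/\nu^{\ast}}$); Theorem \ref{t-comp-dc}(ii) certifies that $(UT)_{1/\nu^{\ast}}$ is itself a cutter, so $(UT)_{1/\tau}$ is the claimed extrapolation.

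Finally, for the demi closedness principle I would invoke Theorem \ref{t-comp-dc}(vi): the assumption $\lambda\mu<4$ excludes $\lambda\mu=4$ and (by the Appendix fact that $\lambda\mu<4$ forces $\lambda\mu<\lambda+\mu$) also forces $\lambda\mu<\lambda+\mu$, so $UT$ inherits the demi closedness principle from $T$ and $U$. To transfer this to $(UT)_{1/\tau}$, observe that for any bounded sequence $\{x^k\}$ with $\|(UT)_{1/\tau}(x^k)-x^k\|\to 0$, the uniform upper bound $\tau\leq\nu^{\ast}$ gives $\|UT(x^k)-x^k\|=\tau(x^k)\,\|(UT)_{1/\tau}(x^k)-x^k\|\leq\nu^{\ast}\,\|(UT)_{1/\tau}(x^k)-x^k\|\to 0$; demi closedness of $UT$ places any weak cluster point $y$ in $\operatorname{Fix}(UT)=\operatorname{Fix} T\cap\operatorname{Fix} U$ (by Theorem \ref{t-comp-dc}(iii)), which coincides with $\operatorname{Fix}((UT)_{1/\tau})$ since $\tau>0$. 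The main (though mild) obstacle I anticipate is exactly this last step: because $\tau$ is not constant, one cannot quote "relaxation preserves demi closedness" verbatim, and the argument genuinely uses the upper bound $\tau\leq\nu^{\ast}$.
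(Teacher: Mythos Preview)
Your argument is correct and follows precisely the route the paper intends: the corollary has no explicit proof in the paper, but it is clearly meant to be proved by rerunning the proof of Theorem~\ref{t-ext-comp-RFNE} with Theorem~\ref{t-comp-dc} in place of Theorem~\ref{t-comp-RFNE}, which is exactly what you do.

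Two small remarks. First, your inequality ``$1/\tau(x)\geq 1/\nu^{\ast}\geq 1$'' is false in general (for instance $\lambda=\mu=1$ gives $\nu^{\ast}=4/3>1$); fortunately you immediately self-correct, since ``extrapolation of $(UT)_{1/\nu^{\ast}}$'' only requires the \emph{relative} relaxation factor $\nu^{\ast}/\tau(x)\geq 1$, not $1/\nu^{\ast}\geq 1$. Just delete the erroneous clause. Second, for the demi closedness step the paper's own route (visible in the proof of Theorem~\ref{t-ext-comp-RFNE}) is to note that $(UT)_{1/\nu^{\ast}}$ itself satisfies the demi closedness principle by Theorem~\ref{t-comp-dc}(vi), and then that any extrapolation of it does too (since $\sigma\geq 1$ forces $\Vert (UT)_{1/\nu^{\ast}}(x^k)-x^k\Vert\leq\Vert (UT)_{1/\tau}(x^k)-x^k\Vert$). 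Your variant---transferring from $UT$ rather than from $(UT)_{1/\nu^{\ast}}$ via the bound $\tau\leq\nu^{\ast}$---is equally valid and uses the same ingredients; there is no substantive difference.
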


\section{\label{s-4}Convergence properties of algorithms employing strict
pseudocontractions and demicontractions}

Let $T,U:\mathcal{H}\rightarrow \mathcal{H}$ be two relaxed cutters
(equivalently, $T,U$ are two demicontractions) with $\limfunc{Fix}(UT)\neq
\emptyset $. For a relaxation function $\sigma :\mathcal{H}\rightarrow
(0,+\infty )$ we define 
\begin{equation*}
V:=(UT)_{\sigma }\text{,}
\end{equation*}%
a $\sigma $-generalized relaxation of $UT$. We consider the iteration 
\begin{equation}
x^{k+1}=V_{\lambda _{k}}(x^{k})=(UT)_{\lambda _{k}\sigma }(x^{k})\text{,}
\label{e-iterV-lamk}
\end{equation}%
where $x^{0}\in \mathcal{H}$ is arbitrary and the relaxation parameter $%
\lambda _{k}\in \lbrack \varepsilon ,2-\varepsilon ]$ for some small $%
\varepsilon >0$. In this section we give conditions under which sequences
generated by iteration (\ref{e-iterV-lamk}) converge weakly to an element of 
$\limfunc{Fix}(UT)\neq \emptyset $. We use the same notation as in
Subsection \ref{ss-3.3}.

\begin{theorem}
\label{t-iter-RFNE}Let $\lambda ,\mu >0$ be such that $\lambda \mu <4$, $T:%
\mathcal{H}\rightarrow \mathcal{H}$ be $\lambda $-RFNE and $U:\mathcal{H}%
\rightarrow \mathcal{H}$ be $\mu $-RFNE with $\limfunc{Fix}(UT)\neq
\emptyset $. Then the sequence $\{x^{k}\}_{k=0}^{\infty }$ generated by
iteration $\mathrm{(\ref{e-iterV-lamk})}$, where $\sigma =1/\tau $ with $%
\tau $ satisfying $\mathrm{(\ref{e-tau})}$ for all $x\in \mathcal{H}$ and
for some $z\in \limfunc{Fix}(UT)$ converges weakly to an element of $%
\limfunc{Fix}(UT)$.
\end{theorem}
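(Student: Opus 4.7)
The plan is to reduce the statement to the known weak convergence theorem for relaxed cutters (Proposition \ref{p-WC}) by recognizing $V = (UT)_\sigma$ as the cutter $(UT)_{1/\tau}$ produced by Theorem \ref{t-ext-comp-RFNE}. First, observe that the hypotheses match exactly those of Theorem \ref{t-ext-comp-RFNE}: we have $\lambda,\mu>0$ with $\lambda\mu<4$, $T$ is $\lambda$-RFNE, $U$ is $\mu$-RFNE, $\limfunc{Fix}(UT)\neq\emptyset$, and $\tau(x)$ lies between $\tau^{\ast}(x,z)$ and $\nu^{\ast}$ for some fixed $z\in\limfunc{Fix}(UT)$. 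Theorem \ref{t-ext-comp-RFNE} then gives that $V=(UT)_{\sigma}=(UT)_{1/\tau}$ is a cutter and satisfies the demi closedness principle.

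Next I verify that $\limfunc{Fix}V=\limfunc{Fix}(UT)$. Since $V(x)-x=\sigma(x)(UT(x)-x)$ and $\sigma(x)=1/\tau(x)>0$ for all $x\in\mathcal{H}$, one has $V(x)=x$ if and only if $UT(x)=x$; in particular $\limfunc{Fix}V\neq\emptyset$.

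Finally, the iteration (\ref{e-iterV-lamk}) reads $x^{k+1}=V_{\lambda_{k}}(x^{k})$ with $\lambda_{k}\in[\varepsilon,2-\varepsilon]$. Since $V$ is a cutter satisfying the demi closedness principle and $\limfunc{Fix}V\neq\emptyset$, Proposition \ref{p-WC} applies and yields that $\{x^{k}\}_{k=0}^{\infty}$ converges weakly to an element of $\limfunc{Fix}V=\limfunc{Fix}(UT)$, which is exactly the conclusion.

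The whole proof is thus a direct chaining of Theorem \ref{t-ext-comp-RFNE} (which is the substantive preparatory result, already proved) with the Krasnosel'ski\u{\i}--Mann-type Proposition \ref{p-WC}. There is no genuine obstacle here; the only point that deserves explicit mention is the elementary identification $\limfunc{Fix}((UT)_{1/\tau})=\limfunc{Fix}(UT)$, which uses $\tau(x)>0$ pointwise, and the observation that the range $[\varepsilon,2-\varepsilon]$ of the outer relaxation parameter $\lambda_{k}$ is precisely what Proposition \ref{p-WC} requires for a relaxed cutter.
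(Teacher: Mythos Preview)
Your proof is correct and follows exactly the same route as the paper, which simply states that the theorem follows directly from Theorem \ref{t-ext-comp-RFNE} and Proposition \ref{p-WC}. Your extra verification that $\limfunc{Fix}((UT)_{1/\tau})=\limfunc{Fix}(UT)$ via $\tau(x)>0$ is a harmless (and helpful) elaboration of a point the paper leaves implicit.
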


\begin{proof}
The theorem follows directly from Theorem \ref{t-ext-comp-RFNE} and from
Proposition \ref{p-WC}.
\end{proof}

\bigskip

Setting $\tau (x)=\nu ^{\ast }$ for $x\notin \limfunc{Fix}(UT)$ in Theorem %
\ref{t-iter-RFNE} we receive the following result.

\begin{corollary}
\label{c-iter-SPCa}Let $\lambda ,\mu >0$ be such that $\lambda \mu <4$, $T:%
\mathcal{H}\rightarrow \mathcal{H}$ be $\lambda $-RFNE and $U:\mathcal{H}%
\rightarrow \mathcal{H}$ be $\mu $-RFNE with $\limfunc{Fix}(UT)\neq
\emptyset $. Then the sequence $\{x^{k}\}_{k=0}^{\infty }$ generated by the
iteration 
\begin{equation}
x^{k+1}=(UT)_{\lambda _{k}/\nu ^{\ast }}(x^{k})\text{,}  \label{e-RASPC}
\end{equation}%
where $x^{0}\in \mathcal{H}$, $\lambda _{k}\in \lbrack \varepsilon
,2-\varepsilon ]$ for some small $\varepsilon >0$ and $\nu ^{\ast }$ is
defined by $\mathrm{(\ref{e-gam-sol}),}$ converges weakly to an element of $%
\limfunc{Fix}(UT)$.
\end{corollary}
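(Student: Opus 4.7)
The plan is to observe that this corollary is the special case of Theorem \ref{t-iter-RFNE} in which the relaxation function is chosen to be the constant $\tau(x)\equiv \nu^{\ast}$, so the entire job reduces to verifying that this choice is admissible and then invoking the theorem. Equivalently, and perhaps more transparently, I would deduce the result directly from Theorem \ref{t-comp-RFNE}(ii) together with the Krasnosel'ski\u{\i}--Mann type Proposition \ref{p-KM}; this second route avoids any reference to the point-dependent function $\tau^{\ast}(\cdot,z)$.

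First I would record that by Theorem \ref{t-comp-RFNE}(ii) the operator $UT$ is $\nu^{\ast}$-RFNE, where $\nu^{\ast}=\nu(\lambda,\mu)>0$ by Lemma \ref{l-ni}(i). By the very definition of a $\nu^{\ast}$-RFNE operator, $W:=(UT)_{1/\nu^{\ast}}$ is FNE. Moreover, since $1/\nu^{\ast}>0$, the identity $\limfunc{Fix} W=\limfunc{Fix}(UT)$ holds, so $\limfunc{Fix} W\neq\emptyset$ by hypothesis.

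Next I would rewrite the iteration in the FNE form required by Proposition \ref{p-KM}. Using the composition-of-relaxations identity $(S_\lambda)_\mu=S_{\lambda\mu}$ (recalled in the Preliminaries just after the definition of $T_\lambda$), we have
\begin{equation*}
(UT)_{\lambda_k/\nu^{\ast}}=\bigl((UT)_{1/\nu^{\ast}}\bigr)_{\lambda_k}=W_{\lambda_k},
\end{equation*}
so the iteration becomes $x^{k+1}=W_{\lambda_k}(x^k)$ with $\lambda_k\in[\varepsilon,2-\varepsilon]$. Since $W$ is FNE with $\limfunc{Fix} W\neq\emptyset$, Proposition \ref{p-KM} yields that $\{x^k\}$ converges weakly to an element of $\limfunc{Fix} W=\limfunc{Fix}(UT)$.

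There is no real obstacle here; the only thing to be careful about is the algebraic manipulation $(UT)_{\lambda_k/\nu^{\ast}}=\bigl((UT)_{1/\nu^{\ast}}\bigr)_{\lambda_k}$, which is just the relaxation composition identity and is completely routine. If one instead follows the first (Theorem \ref{t-iter-RFNE}) route, the only point to check is that the constant function $\tau\equiv\nu^{\ast}$ satisfies (\ref{e-tau}), i.e.\ that $\tau^{\ast}(x,z)\le\nu^{\ast}$, which is exactly the conclusion of Lemma \ref{l-a+b} applied with $a=a_1-a_2$ and $b=b_1-b_2$; the upper bound $\tau(x)\le\nu^{\ast}$ is then trivial.
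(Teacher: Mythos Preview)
Your proposal is correct and your first route---taking the constant function $\tau\equiv\nu^{\ast}$ in Theorem \ref{t-iter-RFNE}---is exactly the paper's proof (which is the one-line remark preceding the corollary). Your second route via Theorem \ref{t-comp-RFNE}(ii) and Proposition \ref{p-KM} is an equally valid and slightly more self-contained alternative.
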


We call the iteration (\ref{e-RASPC}) a \textit{relaxed alternating strict
pseudocontraction method} (RASPC).

\begin{remark}
\rm\ %
If $\lambda =\mu =2$, $T:=(P_{A})_{\lambda }$ and $U:=(P_{B})_{\mu }$, then $%
V:=(UT)_{\rho }$ with $\rho =\frac{1}{2}$ is, actually, the
Douglas--Rachford operator which is FNE. Thus, Theorem \ref{t-iter-RFNE}
extends the convergence of the Douglas--Rachford method to the case $\lambda
\mu <4$.
\end{remark}

Applying Proposition \ref{c-SPC-RFNE}, Theorem \ref{t-iter-RFNE} can be
equivalently formulated as follows.

\begin{corollary}
\label{c-iter-SPC}Let $\alpha ,\beta \in (-\infty ,1)$ be such that $\alpha
+\beta <\alpha \beta $, $T:\mathcal{H}\rightarrow \mathcal{H}$ be an $\alpha 
$-SPC and $U:\mathcal{H}\rightarrow \mathcal{H}$ be a $\beta $-SPC with $%
\limfunc{Fix}(UT)\neq \emptyset $. Then the sequence $\{x^{k}\}_{k=0}^{%
\infty }$ generated by iteration $\mathrm{(\ref{e-iterV-lamk})}$, where the
generalized relaxation function $\sigma =1/\tau $ with $\tau $ satisfying 
\begin{equation}
\frac{2\Vert a_{1}+b_{1}\Vert ^{2}}{(1-\alpha )\Vert a_{1}-a_{2}\Vert
^{2}+(1-\beta )\Vert b_{1}-b_{2}\Vert ^{2}+2\langle
a_{1}-a_{2},b_{1}-b_{2}\rangle }\leq \tau (x)\leq \frac{2(\alpha +\beta )}{%
\alpha +\beta -\alpha \beta }  \label{e-tau2}
\end{equation}%
for arbitrary $x\in \mathcal{H\setminus }\limfunc{Fix}(UT)$ and some $y=z\in 
\limfunc{Fix}(UT)$, converges weakly to an element of $\limfunc{Fix}UT$.
\end{corollary}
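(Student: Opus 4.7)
The plan is to derive Corollary \ref{c-iter-SPC} as a direct reformulation of Theorem \ref{t-iter-RFNE} via the SPC/RFNE equivalence of Proposition \ref{c-SPC-RFNE}. First I would set $\lambda := 2/(1-\alpha) \in (0,+\infty)$ and $\mu := 2/(1-\beta) \in (0,+\infty)$; then Proposition \ref{c-SPC-RFNE} gives that $T$ is $\lambda$-RFNE and $U$ is $\mu$-RFNE. So the whole proof reduces to checking that (a) $\alpha+\beta<\alpha\beta$ is equivalent to $\lambda\mu<4$, and (b) the bracket in (\ref{e-tau2}) is exactly the bracket $[\tau^{\ast}(x,z),\nu^{\ast}]$ from (\ref{e-tau}) after substituting these values of $\lambda,\mu$.

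For (a), a short computation gives $\lambda\mu = 4/[(1-\alpha)(1-\beta)]$, so $\lambda\mu<4$ iff $(1-\alpha)(1-\beta)>1$ iff $\alpha\beta>\alpha+\beta$, as required. For (b), the lower bound: with $1/\lambda = (1-\alpha)/2$ and $1/\mu = (1-\beta)/2$ the denominator in formula (\ref{e-tau*}) for $\tau^{\ast}(x,z)$ becomes $\tfrac12[(1-\alpha)\|a_1-a_2\|^2 + (1-\beta)\|b_1-b_2\|^2 + 2\langle a_1-a_2,b_1-b_2\rangle]$, and clearing the factor $\tfrac12$ produces exactly the left-hand side of (\ref{e-tau2}). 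For the upper bound, I would compute $\lambda+\mu-\lambda\mu = -2(\alpha+\beta)/[(1-\alpha)(1-\beta)]$ and $4-\lambda\mu = 4(\alpha\beta-\alpha-\beta)/[(1-\alpha)(1-\beta)]$, so that
\begin{equation*}
\nu^{\ast}=\frac{4(\lambda+\mu-\lambda\mu)}{4-\lambda\mu}=\frac{-2(\alpha+\beta)}{\alpha\beta-\alpha-\beta}=\frac{2(\alpha+\beta)}{\alpha+\beta-\alpha\beta},
\end{equation*}
matching the right-hand side of (\ref{e-tau2}) exactly.

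Once these two identities are in place, the hypothesis of Corollary \ref{c-iter-SPC} on $\tau$ coincides with the hypothesis of Theorem \ref{t-iter-RFNE}, and the latter theorem delivers the weak convergence of $\{x^k\}$ to an element of $\limfunc{Fix}(UT)$. There is essentially no obstacle here: the work was done in Theorem \ref{t-ext-comp-RFNE} and Theorem \ref{t-iter-RFNE}, and the only care required is the bookkeeping with signs in the identity $\nu^{\ast} = 2(\alpha+\beta)/(\alpha+\beta-\alpha\beta)$, where both numerator and denominator are negative (so $\nu^{\ast}>0$) precisely because $\alpha+\beta<\alpha\beta$ forces $\alpha+\beta<0$ (as noted in the remark preceding Corollary \ref{c-comp-SPC}).
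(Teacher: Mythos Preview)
Your proposal is correct and follows exactly the route the paper indicates: the paper states this corollary with no separate proof, merely noting that ``Applying Proposition~\ref{c-SPC-RFNE}, Theorem~\ref{t-iter-RFNE} can be equivalently formulated as follows.'' Your substitution $\lambda=2/(1-\alpha)$, $\mu=2/(1-\beta)$ and the verification that $\alpha+\beta<\alpha\beta\Leftrightarrow\lambda\mu<4$ and that the two bounds in (\ref{e-tau2}) are precisely $\tau^{\ast}(x,z)$ and $\nu^{\ast}$ from (\ref{e-tau}) is exactly the bookkeeping this reduction requires.
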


\begin{theorem}
\label{t-iter-UV}Let $\lambda ,\mu >0$, $T:\mathcal{H}\rightarrow \mathcal{H}
$ be a $\lambda $-relaxed cutter and $U:\mathcal{H}\rightarrow \mathcal{H}$
be a $\mu $-relaxed cutter with $\limfunc{Fix}T\cap \limfunc{Fix}U\neq
\emptyset $. Suppose that $T$ and $U$ satisfy the demi closedness principle.
If $\lambda \mu <4$ then the sequence $\{x^{k}\}_{k=0}^{\infty }$ generated
by iteration $\mathrm{(\ref{e-iterV-lamk})}$, where $\sigma =1/\tau $ with $%
\tau $ satisfying $\mathrm{(\ref{e-tau*(x)})}$ and $\tau ^{\ast }(x)$
defined by $\mathrm{(\ref{e-tau2*})}$ for all $x\in \mathcal{H}$, converges
weakly to an element of $\limfunc{Fix}T\cap \limfunc{Fix}U$.
\end{theorem}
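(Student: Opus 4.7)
The plan is to reduce the theorem directly to Proposition \ref{p-WC} by exhibiting the operator $V=(UT)_{1/\tau}$ as a cutter, with the correct fixed point set, that satisfies the demi closedness principle. The iteration then becomes $x^{k+1}=V_{\lambda_k}(x^k)$, i.e.\ iteration $\mathrm{(\ref{e-iterT-lamk})}$ applied to this cutter.

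First I would verify that the bracketing condition $\tau^{\ast}(x)\leq\tau(x)\leq\nu^{\ast}$ from $\mathrm{(\ref{e-tau*(x)})}$ is consistent, which follows from Lemma \ref{l-a+b} applied with $a:=a_1$, $b:=b_1$ (remembering that $a_2=b_2=0$ for $y=z\in\operatorname{Fix}T\cap\operatorname{Fix}U$). Then, invoking Corollary \ref{c-ext-comp-RC}, the operator $(UT)_{1/\tau}$ is a cutter that inherits the demi closedness principle from $T$ and $U$. Since $\tau(x)>0$ on $\mathcal{H}$, one has $\operatorname{Fix}\bigl((UT)_{1/\tau}\bigr)=\operatorname{Fix}(UT)$.

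Next I would show that the hypothesis $\lambda\mu<4$ implies $\lambda\mu<\lambda+\mu$, so that Theorem \ref{t-comp-dc}(iii) gives $\operatorname{Fix}(UT)=\operatorname{Fix}T\cap\operatorname{Fix}U$. The implication follows from the AM--GM inequality: $\lambda+\mu\geq 2\sqrt{\lambda\mu}>\lambda\mu$ whenever $\sqrt{\lambda\mu}<2$. Combining, $V:=(UT)_{1/\tau}$ is a cutter satisfying the demi closedness principle with $\operatorname{Fix}V=\operatorname{Fix}T\cap\operatorname{Fix}U\neq\emptyset$.

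Finally I would rewrite the iteration: by definition of the generalized relaxation,
\begin{equation*}
x^{k+1}=(UT)_{\lambda_k\sigma}(x^k)=x^k+\lambda_k\sigma(x^k)\bigl(UT(x^k)-x^k\bigr)=x^k+\lambda_k\bigl(V(x^k)-x^k\bigr)=V_{\lambda_k}(x^k),
\end{equation*}
with $\lambda_k\in[\varepsilon,2-\varepsilon]$. Proposition \ref{p-WC} applied to the cutter $V$ then yields weak convergence of $\{x^k\}_{k=0}^{\infty}$ to an element of $\operatorname{Fix}V=\operatorname{Fix}T\cap\operatorname{Fix}U$, as required. There is no real obstacle here; the content of the theorem sits in the companion results (Corollary \ref{c-ext-comp-RC} for the cutter property and demi closedness of $(UT)_{1/\tau}$, and Theorem \ref{t-comp-dc}(iii) for the fixed point identity), and the only small subtlety is verifying $\lambda\mu<\lambda+\mu$ and reading the iteration as a standard relaxation of $V$ so that Proposition \ref{p-WC} applies verbatim.
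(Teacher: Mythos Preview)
Your proof is correct and follows essentially the same route as the paper: invoke Corollary~\ref{c-ext-comp-RC} to see that $V=(UT)_{1/\tau}$ is a cutter satisfying the demi closedness principle, then apply Proposition~\ref{p-WC}. You are in fact more explicit than the paper on one point the paper leaves implicit, namely the identification $\operatorname{Fix}V=\operatorname{Fix}(UT)=\operatorname{Fix}T\cap\operatorname{Fix}U$ via $\lambda\mu<4\Rightarrow\lambda\mu<\lambda+\mu$ (your AM--GM argument is exactly Lemma~\ref{l-A}); this is needed to land in $\operatorname{Fix}T\cap\operatorname{Fix}U$ rather than merely in $\operatorname{Fix}V$.
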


\begin{proof}
Suppose that $\lambda \mu <4$. By Corollary \ref{c-ext-comp-RC}, $%
(UT)_{\sigma }$ is a cutter which is an extrapolation of the cutter $%
(UT)_{1/\nu ^{\ast }}$ and $(UT)_{\sigma }$ satisfies the demi closedness
principle. The remaining part follows from Proposition \ref{p-WC}.
\end{proof}

\bigskip

We call iteration $\mathrm{(\ref{e-iterV-lamk})}$, where $\sigma =1/\tau $
with $\tau $ satisfying $\mathrm{(\ref{e-tau*(x)})}$ and $\tau ^{\ast }(x)$
defined by $\mathrm{(\ref{e-tau2*})}$ an \textit{extrapolated alternating
demicontraction} (EADC) method.

Theorem \ref{t-iter-UV} can be equivalently formulated as follows.

\begin{corollary}
\label{c-iter-UV}Let $\alpha ,\beta \in (-\infty ,1)$, $T:\mathcal{H}%
\rightarrow \mathcal{H}$ be an $\alpha $-demicontraction and $U:\mathcal{H}%
\rightarrow \mathcal{H}$ be a $\beta $-demicontraction with $\limfunc{Fix}%
T\cap \limfunc{Fix}U\neq \emptyset $. Suppose that $T$ and $U$ satisfy the
demi closedness principle. If $\alpha +\beta <\alpha \beta $ then the
sequence $\{x^{k}\}_{k=0}^{\infty }$ generated by the iteration $\mathrm{(%
\ref{e-iterV-lamk}),}$ where $\sigma =1/\tau $ with $\tau $ satisfying 
\begin{equation*}
\frac{2\Vert a_{1}+b_{1}\Vert ^{2}}{(1-\alpha )\Vert a_{1}\Vert
^{2}+(1-\beta )\Vert b_{1}\Vert ^{2}+2\langle a_{1},b_{1}\rangle }\leq \tau
(x)\leq \frac{2(\alpha +\beta )}{\alpha +\beta -\alpha \beta }
\end{equation*}%
for arbitrary $x\in \mathcal{H\setminus }\limfunc{Fix}(UT)$ converges weakly
to an element of $\limfunc{Fix}T\cap \limfunc{Fix}U$.
\end{corollary}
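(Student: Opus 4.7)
The plan is to deduce the corollary from Theorem \ref{t-iter-UV} by translating the demicontraction hypothesis into the relaxed-cutter language via Corollary \ref{c-dc-c}. Specifically, I would set $\lambda:=2/(1-\alpha)>0$ and $\mu:=2/(1-\beta)>0$; Corollary \ref{c-dc-c} then gives that $T$ is a $\lambda$-relaxed cutter and $U$ is a $\mu$-relaxed cutter, and the standing demi closedness hypothesis transfers verbatim. The fixed point sets are unchanged, so $\limfunc{Fix}T\cap\limfunc{Fix}U\neq\emptyset$ remains in force.

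Next I would verify the two algebraic equivalences needed to match the parameter bounds in Theorem \ref{t-iter-UV}. First, a short computation gives
\begin{equation*}
\lambda\mu=\frac{4}{(1-\alpha)(1-\beta)}=\frac{4}{1-(\alpha+\beta)+\alpha\beta},
\end{equation*}
so $\lambda\mu<4$ is equivalent to $\alpha+\beta<\alpha\beta$, which is exactly the assumed condition. Second, with $1/\lambda=(1-\alpha)/2$ and $1/\mu=(1-\beta)/2$, the lower bound $\tau^{\ast}(x)$ from $\mathrm{(\ref{e-tau2*})}$ becomes
\begin{equation*}
\tau^{\ast}(x)=\frac{\|a_{1}+b_{1}\|^{2}}{\tfrac{1-\alpha}{2}\|a_{1}\|^{2}+\tfrac{1-\beta}{2}\|b_{1}\|^{2}+\langle a_{1},b_{1}\rangle}=\frac{2\|a_{1}+b_{1}\|^{2}}{(1-\alpha)\|a_{1}\|^{2}+(1-\beta)\|b_{1}\|^{2}+2\langle a_{1},b_{1}\rangle},
\end{equation*}
which matches the lower bound in the stated inequality.

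For the upper bound, a direct substitution into $\nu^{\ast}=\frac{4(\lambda+\mu-\lambda\mu)}{4-\lambda\mu}$ gives
\begin{equation*}
\lambda+\mu-\lambda\mu=\frac{-2(\alpha+\beta)}{(1-\alpha)(1-\beta)},\qquad 4-\lambda\mu=\frac{4(\alpha\beta-\alpha-\beta)}{(1-\alpha)(1-\beta)},
\end{equation*}
so that $\nu^{\ast}=\frac{2(\alpha+\beta)}{\alpha+\beta-\alpha\beta}$, which is precisely the upper bound appearing in the hypothesis on $\tau$. Hence the condition $\tau^{\ast}(x)\leq\tau(x)\leq\nu^{\ast}$ required by Theorem \ref{t-iter-UV} is satisfied on $\mathcal{H}\setminus\limfunc{Fix}(UT)$, and (noting that $\limfunc{Fix}(UT)\supseteq\limfunc{Fix}T\cap\limfunc{Fix}U$) all other hypotheses of Theorem \ref{t-iter-UV} are in force. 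Applying that theorem yields weak convergence of $\{x^{k}\}_{k=0}^{\infty}$ to some point of $\limfunc{Fix}T\cap\limfunc{Fix}U$, which is the desired conclusion. The only mild obstacle is bookkeeping the sign of $\alpha+\beta-\alpha\beta$ to see that the stated upper bound is positive, but this follows immediately from $\alpha+\beta<\alpha\beta$.
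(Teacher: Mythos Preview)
Your proof is correct and follows precisely the approach the paper intends: the paper presents Corollary \ref{c-iter-UV} as an equivalent reformulation of Theorem \ref{t-iter-UV} (without writing out a separate proof), and your translation via Corollary \ref{c-dc-c} with $\lambda=2/(1-\alpha)$, $\mu=2/(1-\beta)$ together with the routine algebraic checks is exactly what that equivalence amounts to. One small remark: under the hypothesis $\alpha+\beta<\alpha\beta$ (equivalently $\lambda\mu<4$), Lemma \ref{l-A} gives $\lambda\mu<\lambda+\mu$, and then Theorem \ref{t-comp-dc}(iii) yields $\limfunc{Fix}(UT)=\limfunc{Fix}T\cap\limfunc{Fix}U$, so the domain $\mathcal{H}\setminus\limfunc{Fix}(UT)$ in the corollary coincides with the domain $\mathcal{H}\setminus(\limfunc{Fix}T\cap\limfunc{Fix}U)$ used in Corollary \ref{c-ext-comp-RC} and Theorem \ref{t-iter-UV}; you only noted the inclusion, but in fact equality holds.
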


Contrary to Theorem \ref{t-iter-UV} and Corollary \ref{c-iter-UV}, in
Theorem \ref{t-iter-RFNE} and in Corollary \ref{c-iter-SPC} we do not
suppose that $\limfunc{Fix}T\cap \limfunc{Fix}U\neq \emptyset $. We only
suppose that $\limfunc{Fix}(UT)\neq \emptyset $. In the case of the
Douglas--Rachford operator, i.e. $T=2P_{A}-\limfunc{Id}$ and $U=2P_{B}-%
\limfunc{Id}$ for closed convex subsets $A,B\subseteq \mathcal{H}$ it is
well known that $\limfunc{Fix}(UT)\neq \emptyset $ if and only if $A\cap
B\neq \emptyset $ \cite[Prop. 7]{ACT20}. The first part of the proposition
below extends \cite[Prop. 7]{ACT20}, where the case $\lambda =\mu =2$ was
proved, and is a special case of \cite[Lemma 4.1(iii)]{DP19}. The second
part shows that for $\lambda +\mu \neq \lambda \mu $ the nonemptiness of $%
\limfunc{Fix}(UT)$ is also possible if $A\cap B=\emptyset $.

\begin{proposition}
Let $\lambda ,\mu >0$, $A,B\subseteq \mathcal{H}$ be nonempty, closed and
convex, $T:=(P_{A})_{\lambda }$, $U:=(P_{B})_{\mu }$ and $V:=UT$.

\begin{enumerate}
\item[$\mathrm{(i)}$] If $\lambda +\mu =\lambda \mu $ then%
\begin{equation}
\limfunc{Fix}V\neq \emptyset \Longleftrightarrow A\cap B\neq \emptyset \text{%
;}  \label{e-FixV}
\end{equation}%
If, moreover, $A\cap B\neq \emptyset $ then 
\begin{equation}
P_{A}(\limfunc{Fix}V)=A\cap B\text{.}  \label{e-PA(FixV)}
\end{equation}

\item[$\mathrm{(ii)}$] If $\lambda +\mu \neq \lambda \mu $ then there are $%
A,B$ with $A\cap B=\emptyset $ and $\limfunc{Fix}V\neq \emptyset $.
\end{enumerate}
\end{proposition}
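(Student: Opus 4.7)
\bigskip

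\textbf{Proof plan.} For (i), the forward implication $A\cap B\neq\emptyset\Rightarrow \limfunc{Fix}V\neq\emptyset$ is easy and does not require the hypothesis $\lambda+\mu=\lambda\mu$: for any $c\in A\cap B$ we have $P_A(c)=c$ and $P_B(c)=c$, hence $T(c)=c$, $U(c)=c$, and therefore $c\in\limfunc{Fix}V$. The plan for the reverse implication is to unfold the definitions of $T$ and $U$ and then use $\tfrac1\lambda+\tfrac1\mu=1$ (which is equivalent to $\lambda+\mu=\lambda\mu$) to produce a common point in $A\cap B$ from a fixed point of $V$.

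In detail, suppose $x\in\limfunc{Fix}V$ and set $y:=T(x)$. From $y=x+\lambda(P_A(x)-x)$ we obtain $P_A(x)=x+\tfrac1\lambda(y-x)$, and from $x=U(y)=y+\mu(P_B(y)-y)$ we obtain $P_B(y)=y+\tfrac1\mu(x-y)$. A direct computation yields
\begin{equation*}
P_A(x)-P_B(y)=(x-y)\Bigl(1-\tfrac1\lambda-\tfrac1\mu\Bigr),
\end{equation*}
and the standing assumption $\lambda+\mu=\lambda\mu$ makes the right-hand side vanish. Hence $P_A(x)=P_B(y)$ lies in $A\cap B$, proving both $\limfunc{Fix}V\neq\emptyset\Rightarrow A\cap B\neq\emptyset$ and the inclusion $P_A(\limfunc{Fix}V)\subseteq A\cap B$. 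The reverse inclusion $A\cap B\subseteq P_A(\limfunc{Fix}V)$ follows from the forward implication together with $P_A(c)=c$ for $c\in A\cap B$, which finishes (i).

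For (ii) the plan is to exhibit an explicit example using two parallel, distinct hyperplanes. Fix a unit vector $a\in\mathcal H$ and real numbers $\alpha\neq\beta$, and put $A=\{x:\langle a,x\rangle=\alpha\}$, $B=\{x:\langle a,x\rangle=\beta\}$, so $A\cap B=\emptyset$. Decomposing $x=sa+x_\perp$ with $x_\perp\perp a$ and using $P_A(x)=\alpha a+x_\perp$, $P_B(sa+x_\perp)=\beta a+x_\perp$, a short computation gives
\begin{equation*}
V(x)=\bigl((1-\mu)(1-\lambda)s+(1-\mu)\lambda\alpha+\mu\beta\bigr)a+x_\perp.
\end{equation*}
The equation $V(x)=x$ reduces to $(\lambda+\mu-\lambda\mu)s=(1-\mu)\lambda\alpha+\mu\beta$, which is uniquely solvable in $s$ precisely because $\lambda+\mu\neq\lambda\mu$; the resulting fixed points form a whole affine hyperplane, so $\limfunc{Fix}V\neq\emptyset$ while $A\cap B=\emptyset$.

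The main technical point is the algebraic identity $P_A(x)-P_B(T(x))=(x-T(x))(1-\tfrac1\lambda-\tfrac1\mu)$, which is the precise reason the hypothesis $\lambda+\mu=\lambda\mu$ in (i) is both sufficient and, as (ii) confirms by providing a counterexample, necessary.
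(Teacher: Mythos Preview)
Your proof is correct and follows essentially the same route as the paper: for (i) both arguments reduce the fixed-point equation to the identity $P_A(z)=P_B(T(z))$ (your formulation $P_A(x)-P_B(T(x))=(x-T(x))(1-\tfrac1\lambda-\tfrac1\mu)$ makes the role of $\tfrac1\lambda+\tfrac1\mu=1$ particularly transparent), and for (ii) both use two disjoint parallel hyperplanes and solve the resulting scalar equation, which is possible precisely because $\lambda+\mu-\lambda\mu\neq 0$.
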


\begin{proof}
(i) Suppose that $\lambda +\mu =\lambda \mu $. Clearly, $A\cap B\subseteq 
\limfunc{Fix}V$, so, if $A\cap B\neq \emptyset $ then $\limfunc{Fix}V\neq
\emptyset $. Suppose that $\limfunc{Fix}V\neq \emptyset $. By $V=(\mu
P_{B}+(1-\mu )\limfunc{Id})(P_{A})_{\lambda }$ and by $\lambda +\mu =\lambda
\mu $, 
\begin{equation}
z\in \limfunc{Fix}V\Longleftrightarrow P_{B}(P_{A})_{\lambda }(z)=P_{A}(z)%
\text{.}  \label{e-FixV-PB}
\end{equation}%
This yields $A\cap B\neq \emptyset $ which proves (\ref{e-FixV}). Now we
prove (\ref{e-PA(FixV)}). If $x\in A\cap B$ then $x\in \limfunc{Fix}V$ and $%
P_{A}(x)=x$, thus $x\in P_{A}(\limfunc{Fix}V)$. Let now $x\in P_{A}(\limfunc{%
Fix}V)$. Then $x\in A$ and there is $z\in \limfunc{Fix}V$ such that $%
x=P_{A}(z)$. By (\ref{e-FixV-PB}), $P_{B}(P_{A})_{\lambda }(z)=x$, thus $%
x\in B$. This proves (\ref{e-PA(FixV)}).

(ii) Suppose that $\lambda +\mu \neq \lambda \mu $. Let $A,B\subseteq 
\mathcal{H}$ be two disjoint hyperplanes. Let $a\in A$ and $b:=P_{B}(a)$. If
we set 
\begin{equation*}
x=\frac{\lambda a+\mu b-\lambda \mu a}{\lambda +\mu -\lambda \mu }
\end{equation*}%
then we obtain $V(x)=x$.
\end{proof}

\begin{example}
\rm\ %
Let $A,B\subseteq \mathcal{H}$ be nonempty closed convex subsets that share
a common point, $T=(P_{A})_{\lambda }$ and $U=(P_{B})_{\mu }$, where $%
\lambda ,\mu >0$ and $\lambda +\mu =4$. In particular, if $\lambda =\mu =2$
then $UT$ is NE and $(UT)_{\frac{1}{2}}$ is, actually, the Douglas--Rachford
operator. Now suppose that $\lambda ,\mu \neq 2$. We have $\lambda \mu <4$
and $\nu ^{\ast }=\nu (\lambda ,\mu )=4$. By Theorem \ref{t-comp-RFNE}%
(ii)-(iii) the operator $UT$ is $4$-RFNE and $\limfunc{Fix}UT=A\cap B$,
consequently, the operator $V$ defined by 
\begin{equation*}
V(x):=(UT)_{\frac{1}{4}}(x)=x+\frac{1}{4}(UT(x)-x)
\end{equation*}%
is FNE with $\limfunc{Fix}V=A\cap B$ and the sequence $\{x^{k}\}_{k=0}^{%
\infty }$ generated by the iteration 
\begin{equation}
x^{k+1}=x+\frac{\sigma _{k}}{4}(UT(x^{k})-x^{k}),  \label{e-gRD}
\end{equation}%
where $x^{0}\in \mathcal{H}$ is arbitrary and $\sigma _{k}\in \lbrack
\varepsilon ,2-\varepsilon ]$ for some $\varepsilon \in (0,1)$ converges
weakly to some $z\in \limfunc{Fix}V$. By Corollary \ref{c-iter-UV}, the
convergence also holds if $T$ and $U$ are demicontractions (equivalently,
relaxed cutters) with $\limfunc{Fix}T=A$ and $\limfunc{Fix}U=B$ which
satisfy the demi closedness principle. By Theorem \ref{t-iter-RFNE}, the
convergence also holds if we suppose that $T$ and $U$ are RFNE (or,
equivalently, SPC) with $\limfunc{Fix}(UT)\neq \emptyset $ (even if $A\cap
B=\emptyset $). On Fig. 2 we compare the behavior of the DR iteration and
iteration (\ref{e-gRD}) with $\lambda =3$, $\mu =1$ and $\sigma _{k}=1$ (a
relaxed alternating strict pseudocontraction method), where $A,B\subseteq 
\mathcal{H}$ are two intersecting hyperplanes. \vspace{-0.5cm}%
\begin{equation*}
\FRAME{itbpFU}{2.7766in}{2.1918in}{0in}{\Qcb{Fig. 2. Behavior of DR method
and RASPC metod}}{}{test-linear.emf}{\special{language "Scientific
Word";type "GRAPHIC";display "USEDEF";valid_file "F";width 2.7766in;height
2.1918in;depth 0in;original-width 10.3704in;original-height
7.7782in;cropleft "0";croptop "1";cropright "1";cropbottom "0";filename
'test-linear.emf';file-properties "XNPEU";}}
\end{equation*}%
\vspace{-0.5cm}
\end{example}

\begin{example}
\rm\ %
Let $A\subseteq \mathcal{H}$ be a ball and $B$ be a hyperplane tangent to $A$%
, $T:=(P_{A})_{\lambda },U:=(P_{B})_{\mu }$. On Fig. 3 we compare the
behavior of the DR method with an extrapolated alternating demicontraction
method $x^{k+1}=(UT)_{1/\tau ^{\ast }}(x^{k})$ for $\lambda =3$ and $\mu =1,$
where $\tau ^{\ast }$ is given by (\ref{e-tau2*}).\vspace{-0.5cm}%
\begin{equation*}
\FRAME{itbpFU}{2.5045in}{2.0082in}{0in}{\Qcb{Fig. 3. Behavior of DR method
and EADC method}}{}{t.emf}{\special{language "Scientific Word";type
"GRAPHIC";display "USEDEF";valid_file "F";width 2.5045in;height
2.0082in;depth 0in;original-width 10.3704in;original-height
7.7782in;cropleft "0";croptop "1";cropright "1";cropbottom "0";filename
't.emf';file-properties "XNPEU";}}
\end{equation*}%
\vspace{-0.5cm}
\end{example}

\begin{example}
\rm\ %
Let us come back to Example \ref{ex-CQ} and to iteration (\ref{e-iter-Mou10}%
), where $\alpha ,\beta \in (-\infty ,1)$ and $\lambda >0$. We suppose for
simplicity, that $\mu _{k}$ is constant, $\mu _{k}=\mu >0$. Because $T$ is
an $\alpha $-demicontraction with $\limfunc{Fix}T=A^{-1}(\limfunc{Fix}%
S)=A^{-1}(Q)$ and $U$ is a $\beta $-demicontraction with $\limfunc{Fix}U=C$,
Corollary \ref{c-dc-dc} yields that $T_{\lambda }$ is a $\gamma $%
-demicontraction with $\gamma =1-\frac{1-\alpha }{\lambda }$ and $U_{\mu }$
is a $\delta $-demicontraction with $\delta =1-\frac{1-\beta }{\mu }$. Note
that, contrary to Example \ref{ex-CQ}, $T_{\lambda }$ is not SQNE if $%
\lambda >1-\alpha $ and $U_{\mu }$ is not SQNE if $\mu >1-\beta $. If $%
\gamma +\delta <\gamma \delta $ then, by Corollary \ref{c-comp-dc}, $U_{\mu
}T_{\lambda }$ is a $\nu $-demicontraction with $\nu =\frac{\gamma \delta }{%
\gamma +\delta }$. Now Corollary \ref{c-iter-UV} yields that the operator $%
V:=(U_{\mu }T_{\lambda })_{\tau }$ with $\tau :=\frac{2(\gamma +\delta )}{%
\gamma +\delta -\gamma \delta }$is a cutter and the sequence generated by
the iteration $x^{k+1}=V_{\sigma _{k}}(x^{k})$, where $x^{0}\in \mathcal{H}$
is arbitrary and $\sigma _{k}\in \lbrack \varepsilon ,2-\varepsilon ]$ for
some $\varepsilon \in (0,1)$, converges weakly to an element of $F:=C\cap
A^{-1}(Q)$. Note that Moudafi supposed in \cite{Mou10} that $\lambda \in
(0,1-\alpha )$, $\mu \in (0,1-\beta )$, $\tau =1$ and $\sigma _{k}=1$.
\end{example}

\newpage

\section{Appendix}

\begin{lemma}
\label{l-A}Let $\lambda ,\mu >0$. If $\lambda \mu <4$ then $\lambda +\mu
>\lambda \mu $.
\end{lemma}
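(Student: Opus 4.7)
The plan is to reduce the claim to the AM--GM inequality $\lambda+\mu\geq 2\sqrt{\lambda\mu}$, which holds for all positive $\lambda,\mu$ and is immediate from $(\sqrt{\lambda}-\sqrt{\mu})^{2}\geq 0$. Once AM--GM is in hand, the assumption $\lambda\mu<4$ gives $\sqrt{\lambda\mu}<2$, and multiplying this strict inequality by the positive quantity $\sqrt{\lambda\mu}$ yields $\lambda\mu<2\sqrt{\lambda\mu}$. Chaining the two bounds produces $\lambda\mu<2\sqrt{\lambda\mu}\leq \lambda+\mu$, which is exactly the desired conclusion.

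In practice I would write the proof in three short lines: first record $\lambda+\mu\geq 2\sqrt{\lambda\mu}$; second, observe that $\lambda\mu<4$ together with positivity of $\lambda\mu$ gives $0<\sqrt{\lambda\mu}<2$, hence $\lambda\mu=\sqrt{\lambda\mu}\cdot\sqrt{\lambda\mu}<2\sqrt{\lambda\mu}$; third, combine to get $\lambda\mu<\lambda+\mu$. No case analysis on the relative sizes of $\lambda,\mu$ with respect to $1$ or $2$ is needed, which is a nice bonus.

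There is essentially no main obstacle here; the only subtlety is that the step $\sqrt{\lambda\mu}<2\Rightarrow \lambda\mu<2\sqrt{\lambda\mu}$ uses strict inequality and requires $\sqrt{\lambda\mu}>0$, which is guaranteed by $\lambda,\mu>0$. Should one prefer to avoid square roots entirely, an alternative is to note $\lambda+\mu-\lambda\mu=\lambda(1-\mu)+\mu$ and split on $\mu\leq 1$ (trivial, since both terms are nonnegative and $\mu>0$) versus $\mu>1$ (in which case $\lambda<4/\mu\leq 4$ and a direct rearrangement $\lambda+\mu-\lambda\mu=\mu-\lambda(\mu-1)>\mu-\tfrac{4}{\mu}(\mu-1)=\tfrac{\mu^{2}-4\mu+4}{\mu}=\tfrac{(\mu-2)^{2}}{\mu}\geq 0$ works, with strictness obtained by handling $\mu=2$ separately). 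The AM--GM route is cleaner and is what I would put in the paper.
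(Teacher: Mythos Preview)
Your AM--GM argument is correct and is genuinely different from the paper's proof. The paper proceeds by a case split on whether $\lambda\leq 1$ or $\lambda>1$: in the first case $\lambda+\mu-\lambda\mu=\lambda+\mu(1-\lambda)\geq\lambda>0$ is immediate, while in the second case the paper argues by contradiction, using $\mu<4/\lambda$ and $1-\lambda<0$ to obtain $\lambda+\mu(1-\lambda)>\lambda+\tfrac{4}{\lambda}(1-\lambda)=\tfrac{(\lambda-2)^{2}}{\lambda}\geq 0$. This is essentially the alternative you sketch at the end (with the roles of $\lambda$ and $\mu$ interchanged), so you have in fact reproduced the paper's method as your backup option. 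Your primary route via $\lambda\mu<2\sqrt{\lambda\mu}\leq\lambda+\mu$ is shorter, avoids cases entirely, and makes the role of the threshold $4$ transparent (it is precisely the square of the AM--GM constant $2$). One small remark on your alternative: in the $\mu>1$ branch the strict inequality $\mu-\lambda(\mu-1)>\tfrac{(\mu-2)^{2}}{\mu}$ already comes from $\lambda<4/\mu$ and $\mu-1>0$, so no separate treatment of $\mu=2$ is needed.
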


\begin{proof}
Let $\lambda \mu <4$. If $\lambda \leq 1$ then 
\begin{equation*}
\lambda +\mu (1-\lambda )\geq \lambda >0
\end{equation*}%
which yields $\lambda +\mu >\lambda \mu $. Let now $\lambda >1$ and suppose
that $\lambda +\mu \leq \lambda \mu $. Then we obtain%
\begin{equation*}
0\geq \lambda +\mu (1-\lambda )>\lambda +\frac{4}{\lambda }(1-\lambda )=%
\frac{(\lambda -2)^{2}}{\lambda }\geq 0\text{,}
\end{equation*}%
a contradiction.
\end{proof}

\begin{lemma}
\label{l-B}Let $\alpha ,\beta \in (-\infty ,1)$.

\begin{enumerate}
\item[$\mathrm{(i)}$] If $\alpha +\beta <\alpha \beta $ then $\alpha +\beta
<0$ and $\frac{\alpha \beta }{\alpha +\beta }<1$.

\item[$\mathrm{(ii)}$] If $\frac{\alpha \beta }{\alpha +\beta }<1$ and at
most one of $\alpha ,\beta \geq 0$ then $\alpha +\beta <\alpha \beta $.
\end{enumerate}
\end{lemma}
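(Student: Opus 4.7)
For part (i), the plan is to first pin down the sign $\alpha+\beta<0$ by contradiction and then to obtain the quotient inequality by division. Assume $\alpha+\beta<\alpha\beta$ and suppose, for contradiction, that $\alpha+\beta\geq 0$. Two cases arise. If both $\alpha,\beta\in[0,1)$, then I would rewrite $\alpha\beta-(\alpha+\beta)=\alpha(\beta-1)-\beta\leq 0$, using $\beta<1$ together with $\alpha,\beta\geq 0$, which contradicts the strict hypothesis. If exactly one of $\alpha,\beta$ is nonnegative (say $\alpha\geq 0$, $\beta<0$) but nevertheless $\alpha+\beta\geq 0$, then $\alpha\beta\leq 0\leq\alpha+\beta$, again contradicting the hypothesis. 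Hence $\alpha+\beta<0$. The quotient $\frac{\alpha\beta}{\alpha+\beta}$ is then well defined, and dividing $\alpha+\beta<\alpha\beta$ by the negative number $\alpha+\beta$ reverses the inequality and yields $\frac{\alpha\beta}{\alpha+\beta}<1$.

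For part (ii), the plan is to reverse the last step of (i): pass from the quotient inequality back to the additive one by multiplication. The hypothesis that at most one of $\alpha,\beta$ is nonnegative splits into two situations. If both $\alpha,\beta<0$, then $\alpha+\beta<0$ is immediate; multiplying $\frac{\alpha\beta}{\alpha+\beta}<1$ by the negative $\alpha+\beta$ flips the inequality to $\alpha\beta>\alpha+\beta$, which is exactly the conclusion. In the mixed-sign situation, say $\alpha\geq 0$ and $\beta<0$, one has $\alpha\beta\leq 0$; together with the well-definedness of $\frac{\alpha\beta}{\alpha+\beta}$ and the inequality $\frac{\alpha\beta}{\alpha+\beta}<1$, this restricts attention to the subcase $\alpha+\beta<0$, after which the same multiplicative flip gives $\alpha+\beta<\alpha\beta$.

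The main obstacle is purely sign bookkeeping: the essential content of both parts is the equivalence, modulo a sign flip under multiplication by $\alpha+\beta$, between $\alpha+\beta<\alpha\beta$ and $\frac{\alpha\beta}{\alpha+\beta}<1$. The care required is in checking, in each part, that the combined hypotheses force $\alpha+\beta<0$, so that the flip goes in the correct direction; this is the content of the case analysis excluding $\alpha+\beta\geq 0$ in part (i), and of the sign analysis keyed to the at-most-one-nonnegative hypothesis in part (ii).
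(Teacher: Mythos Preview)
Your argument for part (i) is correct; it is also more direct than the paper's, which proves (i) via the change of variables $\alpha=1-2/\lambda$, $\beta=1-2/\mu$ and an appeal to Lemma~\ref{l-A}. Your elementary case split avoids that detour and stands on its own.

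In part (ii), however, there is a genuine gap. In the mixed-sign case you assert that the hypotheses ``restrict attention to the subcase $\alpha+\beta<0$'', but you give no justification, and in fact the claim is false. Take $\alpha=-\tfrac12$ and $\beta=\tfrac{9}{10}$: both lie in $(-\infty,1)$, exactly one is nonnegative, and
\[
\alpha+\beta=\tfrac{2}{5}>0,\qquad \alpha\beta=-\tfrac{9}{20},\qquad \frac{\alpha\beta}{\alpha+\beta}=-\tfrac{9}{8}<1,
\]
so all hypotheses of (ii) are satisfied, yet $\alpha+\beta=\tfrac{2}{5}\not<-\tfrac{9}{20}=\alpha\beta$. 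Thus (ii) is false as stated, and the gap in your argument cannot be closed. The paper's own proof carries the same defect: in the case $\alpha<0$, $\beta>0$ it passes from $\tfrac{\alpha\beta}{\alpha+\beta}<1$ to $\tfrac{\alpha+\beta}{\alpha\beta}>1$, a step that fails precisely when $\alpha+\beta>0$. A correct converse needs an extra hypothesis such as $\alpha+\beta<0$ (or, equivalently via the bijection $\alpha=1-2/\lambda$, the condition $\lambda+\mu>\lambda\mu$); under that assumption your multiplicative flip is valid and both subcases go through.
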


\begin{proof}
(i) Define $f(x)=1-\frac{2}{x}$ for $x>0$. Then (i) follows from Lemma \ref%
{l-A} by setting $\alpha =f(\lambda )$ and $\beta =f(\mu )$.

(ii) Let $\frac{\alpha \beta }{\alpha +\beta }<1$. If $\alpha ,\beta <0$
then $\alpha +\beta <0$, consequently, $\alpha \beta >\alpha +\beta $. By
the symmetry it is enough to consider the case $\alpha <0$, $\beta \geq 0$.
If $\beta =0$ then $\alpha +\beta =\alpha <0=\alpha \beta $. If $\beta >0$
then $\frac{\alpha +\beta }{\alpha \beta }>1$ which yields $\alpha +\beta
<\alpha \beta $.
\end{proof}

\end{document}